\begin{document}



\title{Fractal analysis of Hopf bifurcation at infinity}

\author{\fontsize{11pt}{13pt} GORAN RADUNOVI\' C, DARKO \v ZUBRINI\'C and VESNA \v ZUPANOVI\'C}

\address{Department of Applied Mathematics, Faculty of Electrical Engineering and Computing\\ University of Zagreb,
10000 Zagreb, Croatia}



\begin{abstract}
Using geometric inversion with respect to the origin we extend the definition of box dimension to the case of unbounded subsets of Euclidean spaces. Alternative but equivalent definition is provided using stereographic projection on the Riemann sphere. We study its basic properties, and apply it to the study of the Hopf-Takens bifurcation at infinity.
\end{abstract}

\keywords{Spiral, box dimension of unbounded sets, Minkowski content, planar vector field, Hopf-Takens bifurcation at infinity.}


\newtheorem{theorem}{Theorem}[section]
\newtheorem{cor}[theorem]{Corollary}
\newtheorem{prop}[theorem]{Proposition}
\newtheorem{lemma}[theorem]{Lemma}

\theoremstyle{remark}
\newtheorem{remark}[theorem]{Remark}
\newtheorem{defn}[theorem]{Definition}
\newtheorem{example}[theorem]{Example}
\newtheorem{problem}[theorem]{Problem}
\newtheorem{exercise}[theorem]{Exercise}

\newcommand{\re}{\operatorname{Re}}
\newcommand{\im}{\operatorname{Im}}

\font\csc=cmcsc10

\def\esssup{\mathop{\rm ess\,sup}}
\def\essinf{\mathop{\rm ess\,inf}}
\def\wo#1#2#3{W^{#1,#2}_0(#3)}
\def\w#1#2#3{W^{#1,#2}(#3)}
\def\wloc#1#2#3{W_{\scriptstyle loc}^{#1,#2}(#3)}
\def\osc{\mathop{\rm osc}}
\def\var{\mathop{\rm Var}}
\def\supp{\mathop{\rm supp}}
\def\Cap{{\rm Cap}}
\def\norma#1#2{\|#1\|_{#2}}

\def\C{\Gamma}

\let\text=\mbox

\catcode`\@=11
\let\ced=\c
\def\a{\alpha}
\def\b{\beta}
\def\c{\gamma}
\def\d{\delta}
\def\g{\lambda}
\def\o{\omega}
\def\q{\quad}
\def\n{\nabla}
\def\s{\sigma}
\def\div{\mathop{\rm div}}
\def\sing{{\rm Sing}\,}
\def\singg{{\rm Sing}_\ty\,}

\def\A{{\cal A}}
\def\F{{\cal F}}
\def\H{{\cal H}}
\def\W{{\bf W}}
\def\M{{\cal M}}
\def\N{{\cal N}}
\def\S{{\cal S}}

\def\eR{{\bf R}}
\def\eN{{\bf N}}
\def\Ze{{\bf Z}}
\def\Qe{{\bf Q}}
\def\Ce{{\bf C}}

\def\ty{\infty}
\def\e{\varepsilon}
\def\f{\varphi}
\def\:{{\penalty10000\hbox{\kern1mm\rm:\kern1mm}\penalty10000}}
\def\ov#1{\overline{#1}}
\def\D{\Delta}
\def\O{\Omega}
\def\pa{\partial}

\def\st{\subset}
\def\stq{\subseteq}
\def\pd#1#2{\frac{\pa#1}{\pa#2}}
\def\sgn{{\rm sgn}\,}
\def\sp#1#2{\langle#1,#2\rangle}

\newcount\br@j
\br@j=0
\def\q{\quad}
\def\gg #1#2{\hat G_{#1}#2(x)}
\def\inty{\int_0^{\ty}}
\def\od#1#2{\frac{d#1}{d#2}}

\def\bg{\begin}
\def\eq{equation}
\def\bgeq{\bg{\eq}}
\def\endeq{\end{\eq}}
\def\bgeqnn{\bg{eqnarray*}}
\def\endeqnn{\end{eqnarray*}}
\def\bgeqn{\bg{eqnarray}}
\def\endeqn{\end{eqnarray}}

\def\bgeqq#1#2{\bgeqn\label{#1} #2\left\{\begin{array}{ll}}
\def\endeqq{\end{array}\right.\endeqn}

\def\abstract{\bgroup\leftskip=2\parindent\rightskip=2\parindent
        \noindent{\bf Abstract.\enspace}}
\def\endabstract{\par\egroup}

\def\udesno#1{\unskip\nobreak\hfil\penalty50\hskip1em\hbox{}
             \nobreak\hfil{#1\unskip\ignorespaces}
                 \parfillskip=\z@ \finalhyphendemerits=\z@\par
                 \parfillskip=0pt plus 1fil}
\catcode`\@=11

\def\cal{\mathcal}
\def\eR{\mathbb{R}}
\def\eN{\mathbb{N}}
\def\Ze{\mathbb{Z}}
\def\Qu{\mathbb{Q}}
\def\Ce{\mathbb{C}}

\def\osd{\mathrm{osd}\,}

\def\sdim{\mbox{\rm s-dim}\,}
\def\sd{\mbox{\rm sd}\,}

\newcommand{\diag}{\operatorname{diag}}

\addtolength{\textheight}{-0cm}







\maketitle

\section{Introduction}

The main goal of dimension theory for dynamics is to measure the complexity of invariant sets and measures using fractal dimensions. A good example of this can be seen in \cite{piac} where the Hausdorff dimension of a particular case of the H\' enon attractor is estimated and compared to its box dimension. In many cases fractal dimensions can give us a better understanding of the dynamics appearing in various problems in physics, engineering, chemistry, medicine, etc. The second and the third author used the box dimension to analyse spiral trajectories of some planar vector fields in \cite{zuzu}. Among other things, they studied the Hopf bifurcation which is a well known bifurcation of $1$-parameter families of vector fields in which a limit cycle, that is, an isolated periodic orbit, is born from a singular point. The generalisation is the Hopf-Takens bifurcation which gives rise to more than one limit cycle born from a singular point. It was shown that the box dimension of spiral trajectories near singular points or nonhyperbolic (multiple) limit cycles becomes nontrivial, that is, greater than $1$ precisely at a point at which the corresponding dynamical system undergoes the bifuraction. Moreover, the box dimension can only take values from a discrete set and depends on the multiplicity of the corresponding singular point or limit cycle. This could be utilised for computing the multiplicity. In a way, this is related to the $16$th Hilbert problem of finding an upper uniform bound for the number of limit cycles in dependence on the degree of the polynomial vector field. Results about spiral trajectories of some vector fields in $\eR^3$ can be found in \cite{3}. Further studies using the asympthotic behaviour of the analytic Poincar\' e map associated to the spiral trajectories near singular points and periodic orbits can be seen in \cite{belg}.

The multiplicity of the Poincar\' e map is related to the notion of cyclicity, that is the number of limit cycles that can be born after a small perturbation of the system. Furthermore, the Poincar\' e map of a planar vector field generates a 1-dimensional discrete dynamical system. In \cite{neveda} it was shown that the box dimension of the $1$-dimensional trajectory is related to this discrete dynamical system. There the box dimension of the corresponding orbit is studied for the classical saddle-node and period doubling bifurcations. Using these results, classical theorems about these bifurcations were extended. Further extensions to 2-dimensional discrete dynamical systems and applications to continuous dynamical systems were obtained in \cite{lana}.

It is known that limit cycles can also be generated from a polycycle, which is an ordered collection of singular points (vertices) and bi-asymptotic trajectories (edges) connecting them in a specified order. Remark that an isolated singular point is a special case of a polycycle. The next simplest case is a saddle-loop, that is a polycycle with only one vertex and one edge. The Poincar\' e map near a saddle loop, although it is not analytic, shows its cyclycity (see \cite{roussarie}, \cite{liquion}). In \cite{mrz} this was investigated from the point of view of fractal geometry. The classical box dimension was not fine enough to distinguish between all the cases which could appear, so a generalisation called the critical Minkowski order has been introduced.

As limit cycles can also be born from a point or a polycycle at infinity, it makes sense to generalise the previous results to this case. It is also interesting to study the problem of Hopf-Takens bifurcation of polynomial vector fields at infinity from the fractal point of view. Related problems have been studied in \cite{Caubergh11}, \cite{Blows93} and \cite{gine}. 

In this paper we deal with vector fields possessing spiral trajectories tending to infinity. The provided visualisations clearly show that in the case of a weak focus at infinity such trajectories exhibit an
almost ``planar'' nature. We measure this phenomenon using the box dimension of trajectories. Since the trajectories tending to infinity are unbounded, we have adapted the definition of
box dimension to this case, since the usual box dimension is defined for bounded sets only. We do this using the geometric inversion, see Definition~\ref{box} below.

Let us recall the definitions of Minkowski content and box dimension.  By $|\O|$ we denote the $n$-dimensional Lebesgue measure of an open
subset $\O$ of $\eR^n$. Let $A$ be a nonempty bounded subset of $\eR^n$, $A_\e$ the $\e$-neighbourhood of $A$ in the Euclidean metric and $s\geq 0$.
The {\em upper $s$-dimensional Minkowski content} of  $A$ is defined by 
\bgeq\label{mink12}
\M^{*s}(A)=\limsup_{\e\to0}\frac{|A_\e|}{\e^{n-s}}, 
\endeq
and we define analogously the {\em lower $s$-dimensional Minkowski content} of $A$, denoted by $\M_*^s(A)$. The {\em upper box dimension} of $A$ is defined by
\bgeq\label{dim}
\ov\dim_BA=\inf\{s>0:\M^{*s}(A)=0\};
\endeq
it is easy to see that we also have
\bgeq\label{Mty}
\ov\dim_BA=\sup\{s>0:\M^{*s}(A)=\ty\}.
\endeq
The lower box dimension of $A$, denoted by $\underline \dim_BA$,  is defined analogously, using $\M_*^s(A)$ instead of  $\M^{*s}(A)$ in (\ref{dim}) (and in (\ref{Mty})). If both dimensions 
$\ov\dim_BA$ and $\underline\dim_BA$
are equal, the common value is denoted by $\dim_BA$, and is called the {\em box dimension} of $A$ (also known as Minkowski-Bouligand dimension, or limit capacity). If there exists $d\ge0$ such that $0<\M_*^d(A)\le\M^{*d}(A)<\ty$, we say that $A$ is {\em Minkowski nondegenerate},
and {\em Minkowski degenerate} otherwise.
(Note that if $A$ is nondegenerate, it then follows from (\ref{dim})--(\ref{Mty}) and their counterpart for $\M_*^s(A)$ that $\dim_BA$ exists and is equal to~$d$.)
 If $\M_*^d(A)=\M^{*d}(A)$, the common value is denoted by $\M^d(A)$, and called the {\em Minkowski content}. If moreover $\M^d(A)\in(0,\ty)$, then $A$ is said to be {\em Minkowski measurable}. For more information about these notions and their generalisations see  \cite{zuzu}, \cite{nova}, \cite{falc} and \cite{mrz}.

In the sequel we will use the following notation. If $f,g:\eR\to(0,\ty)$ are two functions such that $f(t)\to0$ and $g(t)\to0$ as $t\to t_0$ ($t_0$ can be $\ty$ as well), we write $f(t)\sim g(t)$ as $t\to t_0$
if $\lim_{t\to t_0}\frac{f(t)}{g(t)}=1$. We write $f(t)\simeq g(t)$ and say that $f$ and $g$ are {\em comparable} as $t\to t_0$ if there exist positive constants $c_{1,2}$ such that $c_1 g(t)\le f(t)\le c_2g(t)$ for all $t$
in a neighbourhood of $t_0$. A function $f:V\to\eR^n$, $V\stq\eR^n$, is said to be Lipschitzian if $|f(a)-f(b)|\simeq|a-b|$ for all $a,b\in V$.

\section{Box dimension and Minkowski content of unbounded sets}

\subsection{Definition of box dimension of unbounded sets by geometric inversion}
We start with a polynomial system
\bgeq\label{x}
\dot x=P(x)
\endeq
defined on $\eR^n$. Applying the change of variables $u=x/|x|^2$ (here $|x|$ is Euclidean norm, $|x|^2=x_1^2+\dots+x_n^2$), which is the well known geometric inversion 
of $\eR^n\setminus\{0\}$ with respect to the origin, after a short computation we arrive at the following system:
\bgeq\label{u}
\dot u=|u|^2\tilde P(u)-2u(u\cdot\tilde P(u)),
\endeq
defined on $\eR^n\setminus\{0\}$, where 
\bgeq
\tilde P(u)=P\left(\frac u{|u|^2}\right).\nonumber
\endeq
The geometric inversion is clearly involutive, so that $x=u/|u|^2$.
The right-hand side of (\ref{u}) is not necessarily a polynomial field in dependence of $u_1,\dots,u_n$, where $u=(u_1,\dots,u_n)$.
However, note that the largest exponent of $|u|^{-2}$ appearing within the component functions of $\tilde P(u)$ is equal to $k=\deg P:=\max_i\deg P_i$. Hence, 
\bgeq\label{uk}
\dot u=|u|^{2k}\left(|u|^2\tilde P(u)-2u(u\cdot\tilde P(u))\right),
\endeq
is a polynomial vector field. 

For any set $A\st\eR^n\setminus\{0\}$ we can define its geometric inverse with respect to the origin by
\bgeq
\Phi(A)=\{\Phi(x):x\in A\},\nonumber
\endeq
where $\Phi(x)=\frac x{|x|^2}$. As we have said, the mapping is involutive: $\Phi^2=id$.
If we denote the phase portrait of (\ref{x}) by $\mathcal P=\{\C_i:i\in I\}$ (the family of trajectories $\C_i$), it will be convenient to define $\Phi(\mathcal P)$ by
\bgeq\label{P-1}
\Phi(\mathcal P)=\{\Phi(\C_i):i\in I\}.
\endeq
It is clear that $\Phi(\mathcal P)$ is the phase portrait of (\ref{u}) on $\eR^n\setminus\{0\}$.
Hence, we have proved the following result.

\begin{figure}[h]
\begin{center}
\includegraphics[width=6cm]{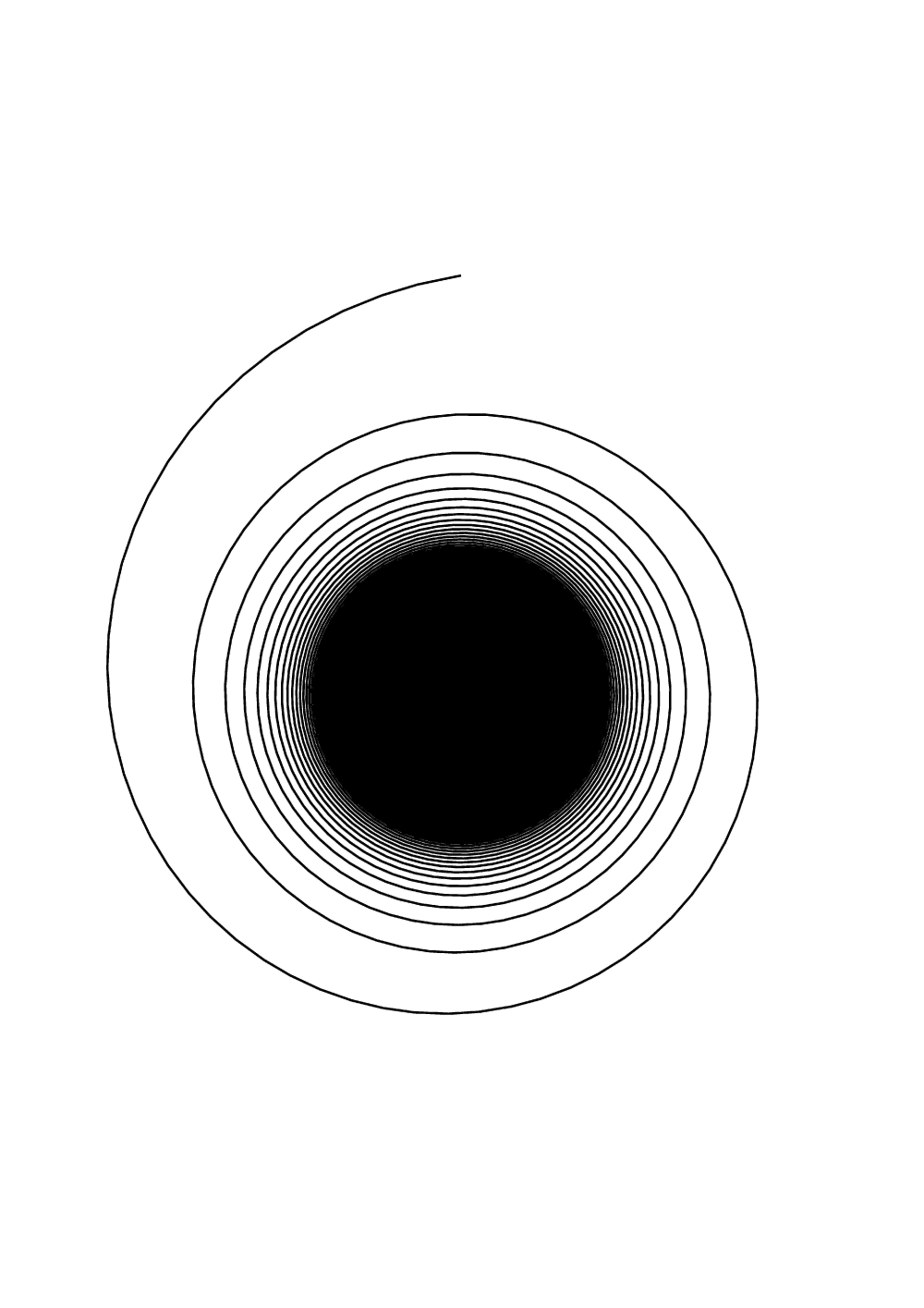}
\includegraphics[width=6cm]{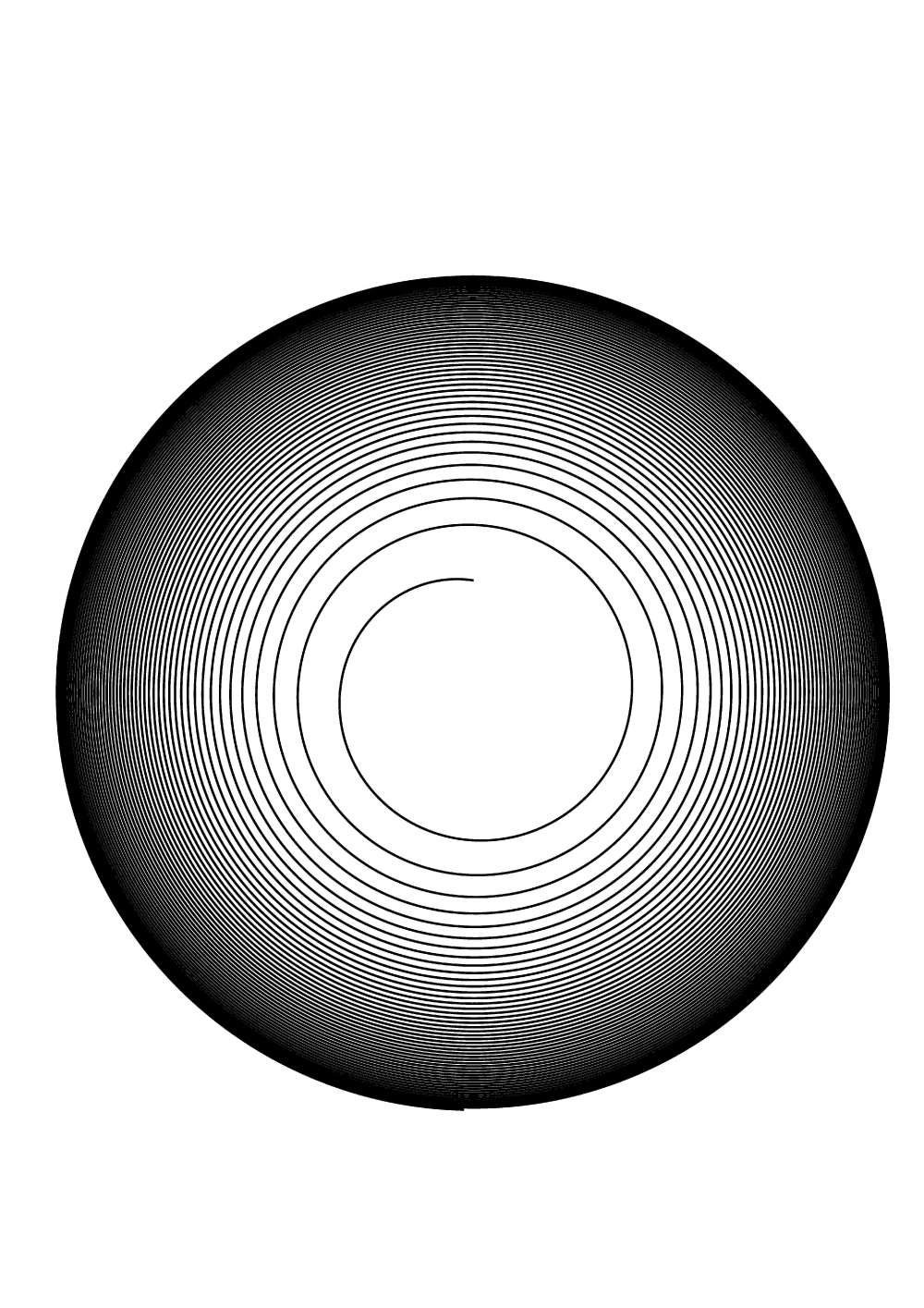}
\end{center}
\caption{The bounded spiral $r=\f^{-1/4}$ (left) and the unbounded spiral $r=\f^{1/4}$ (right) both have the same box dimension equal to $8/5$. The nucleus of the bounded spiral is at the origin whereas the nucleus of the unbounded spiral is at infinity.}
\label{unbounded_spiral}
\end{figure}

\begin{lemma}
Let a polynomial vector field $P$ in $\eR^n$ be given, and let $\mathcal P$ be the phase portrait of (\ref{x}).
Then there exists an explicit polynomial vector field in $\eR^n$, given by (\ref{uk}), such that its phase portrait is equal to $\Phi(\mathcal P)$.

In particular, if $\{C_i:i\in I\}$ is the collection of all limit cycles of a polynomial vector field, then there exists a polynomial vector field in $\eR^n$
such that $\{\Phi(C_i):i\in I\}$ is the collection of all limit cycles of the new vector field.
\end{lemma}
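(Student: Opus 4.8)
The plan is to verify directly that the substitution $u = x/|x|^2$ transforms the system $\dot x = P(x)$ into the system \eqref{u}, and then to observe that \eqref{uk} is just \eqref{u} multiplied through by the strictly positive scalar $|u|^{2k}$, which does not change the trajectories as point sets (only their parametrisation), together with the already-established fact that geometric inversion is involutive. First I would carry out the chain-rule computation: writing $\Phi(x) = x/|x|^2$, one has $u_j = x_j/|x|^2$, and differentiating in time gives $\dot u_j = \dot x_j/|x|^2 - 2 x_j (x\cdot \dot x)/|x|^4$. Substituting $\dot x = P(x)$ and then re-expressing $x$ in terms of $u$ via the involution $x = u/|u|^2$ (so that $|x|^2 = 1/|u|^2$ and $P(x) = \tilde P(u)$) yields, after collecting terms, exactly $\dot u = |u|^2\tilde P(u) - 2u(u\cdot\tilde P(u))$, which is \eqref{u}. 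This is the routine part of the argument.

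Next I would address the polynomial claim. The point is that each component $P_i$ is a polynomial of degree at most $k$, so $P_i(u/|u|^2)$ is a finite sum of terms each of the form (monomial in $u$)$\cdot |u|^{-2m}$ with $0 \le m \le k$; hence $|u|^{2k}\tilde P(u)$ has polynomial components. Multiplying \eqref{u} by $|u|^{2k}$ therefore produces the vector field \eqref{uk} whose right-hand side is a genuine polynomial in $u_1,\dots,u_n$. Since $|u|^{2k} > 0$ on $\eR^n\setminus\{0\}$, this multiplication is a time-reparametrisation that preserves every orbit as a subset of $\eR^n\setminus\{0\}$; thus the phase portrait of \eqref{uk} coincides with the phase portrait of \eqref{u}, which by construction is $\Phi(\mathcal P)$ as defined in \eqref{P-1}. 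This proves the first assertion.

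For the second assertion, I would note that a limit cycle of \eqref{x} is an isolated periodic orbit $C_i$; because $\Phi$ is a diffeomorphism of $\eR^n\setminus\{0\}$ onto itself (being involutive and smooth away from the origin) and maps orbits of \eqref{x} to orbits of \eqref{u} (hence of \eqref{uk}), it carries periodic orbits to periodic orbits and preserves isolation, so each $\Phi(C_i)$ is a limit cycle of \eqref{uk}, and conversely every limit cycle of \eqref{uk} is the $\Phi$-image of one of \eqref{x}. One caveat to handle explicitly: a limit cycle of \eqref{x} passing through the origin would have no well-defined image, but since $P$ is a polynomial vector field and the origin is either a regular point or a singular point of \eqref{x}, a periodic orbit cannot pass through the origin in a way that obstructs the argument — more simply, any limit cycle of \eqref{x} lying in $\eR^n\setminus\{0\}$ maps to a limit cycle of \eqref{uk}, and a limit cycle of \eqref{x} through the origin, if it occurred, corresponds to a limit cycle of \eqref{uk} ``at infinity'' and is simply not counted. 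The only mild obstacle is thus bookkeeping about the puncture at the origin; the computational core is entirely straightforward.
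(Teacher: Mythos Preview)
Your proposal is correct and follows essentially the same approach as the paper: the paper's proof is in fact the discussion \emph{preceding} the lemma (concluding with ``Hence, we have proved the following result''), which carries out exactly the chain-rule derivation of \eqref{u}, the degree count showing \eqref{uk} is polynomial, and the observation that $\Phi(\mathcal P)$ is the phase portrait of \eqref{u}. You supply more detail than the paper --- in particular your explicit justification that $\Phi$, being a diffeomorphism of $\eR^n\setminus\{0\}$, carries isolated periodic orbits to isolated periodic orbits, and your remark about the puncture at the origin --- but the underlying argument is identical.
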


As we see, if $C$ is a limit cycle of a polynomial system, then its geometric inverse $\Phi(C)$ is also a limit cycle of a polynomial system.

\begin{defn}
We say that the infinite point is a weak focus of a dynamical system in $\eR^{n}$ if the origin is a weak focus of the system obtained by its geometric inversion.
\end{defn}

It will be convenient to extend the definition of the box dimension from bounded sets in $\eR^n$ to the case of unbounded sets.

\begin{defn}\label{box}
Let $A$ be an unbounded set in $\eR^n$, which is away from the origin, that is, $d(A,\{0\})=\inf\{|a|:a\in A\}>0$. 
Then clearly $\Phi(A)$ is bounded, and we define the upper box dimension of $A$ by
\bgeq
\ov\dim_BA=\ov\dim_B\Phi(A).\nonumber
\endeq
Analogously for the lower box dimension. If both the upper and lower box dimensions of $A$ coincide, we call it just the box dimension of $A$, and denote it by
$\dim_BA$.

The definition of the box dimension of $A$ does not depend on the choice of the origin.

\begin{prop} Assume that $A$ is a given subset of $\eR^n$, and $0\notin\ov A$. Assume that also $w\notin\ov A$. Let $\Phi$ be the geometric inversion with respect to the origin, and $\Psi$ the geometric reflection with respect to the point $w\in\eR^n$, that is, $\Psi(x)=\frac{x-w}{|x-w|^2}$. Then  $f=\Psi\circ\Phi:\Phi(A)\to\Psi(A)$ is a bi-Lipschitz mapping. In particular, see \cite{falc}, we have $\ov\dim_B\Phi(A)=\ov\dim_B\Psi(A)$, and similarly for the lower box dimension.
\end{prop}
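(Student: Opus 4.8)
The plan is to show directly that $f = \Psi\circ\Phi$ is bi-Lipschitz on $\Phi(A)$, since then the invariance of upper and lower box dimension under bi-Lipschitz maps (see \cite{falc}) gives the conclusion immediately. Write $K = \Phi(A)$. Since $0\notin\ov A$, the set $K=\Phi(A)$ is bounded; and since $w\notin\ov A$, its geometric inverse $\Phi(w)$ lies outside $\ov K$ (because $\Phi$ is an involutive homeomorphism of $\eR^n\setminus\{0\}$ onto itself). Now observe that $f = \Psi\circ\Phi$ is itself a composition of two geometric inversions, and one computes that such a composition is again a Möbius-type map; more concretely, on the relevant domain it is smooth. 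Indeed, unravelling the formula, $f(y)=\Psi(\Phi(y))=\dfrac{\Phi(y)-w}{|\Phi(y)-w|^2}$, and since $\Phi(y)=y/|y|^2$ we get
\bgeq
f(y)=\frac{\frac{y}{|y|^2}-w}{\left|\frac{y}{|y|^2}-w\right|^2}
     =\frac{y-|y|^2w}{\left|y-|y|^2w\right|^2}\cdot|y|^2,\nonumber
\endeq
after clearing $|y|^2$ from numerator and denominator. The denominator $|y-|y|^2w|^2$ is a polynomial in $y$ that, on $K$, is bounded away from zero: it vanishes only when $y=|y|^2 w$, i.e. $\Phi(y)=w$, i.e. $y=\Phi(w)$, and $\Phi(w)\notin\ov K$. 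Hence $f$ is of class $C^1$ on a neighbourhood of the compact set $\ov K$.

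The key steps, in order, are: (i) record that $K=\Phi(A)$ is bounded (hence $\ov K$ compact) and that the point $p:=\Phi(w)$ satisfies $p\notin\ov K$; (ii) derive the closed-form expression for $f$ above and note its denominator is a nonvanishing polynomial on a neighbourhood $U$ of $\ov K$, so $f\in C^1(U)$; (iii) conclude that $f$ is Lipschitz on $\ov K$ with constant $L=\sup_{\ov K}\|Df\|$, since $\ov K$ is compact and convexity is not needed — one can enlarge $U$ to a bounded convex open set still avoiding $p$, or simply invoke that a $C^1$ map on a neighbourhood of a compact set is locally Lipschitz and patch finitely many balls; (iv) apply the entire argument symmetrically. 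Indeed $f^{-1}=\Phi^{-1}\circ\Psi^{-1}=\Phi\circ\Psi$ maps $\Psi(A)$ onto $\Phi(A)$, and $\Psi(A)$ is bounded (as $w\notin\ov A$) and avoids the point $\Psi$ would send to the origin, namely $\Phi^{-1}$ of that — more cleanly, $f^{-1}$ has a completely analogous closed form with $w$ replaced by $0$ and the roles swapped, so it too is $C^1$ on a neighbourhood of $\ov{\Psi(A)}$ and hence Lipschitz there. Combining (iii) and (iv) yields bi-Lipschitz equivalence of $\Phi(A)$ and $\Psi(A)$ (in the sense $|f(a)-f(b)|\simeq|a-b|$ used in the paper), and then \cite{falc} gives $\ov\dim_B\Phi(A)=\ov\dim_B\Psi(A)$ and likewise for the lower dimension.

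The main obstacle is purely bookkeeping: one must be careful that the hypothesis $w\notin\ov A$ is exactly what guarantees the relevant denominator does not vanish on $\ov{\Phi(A)}$ (this is where $0\notin\ov A$ is also used, to ensure $\Phi(A)$ is bounded and that $\Phi$ is a diffeomorphism near $\ov A$ so that $\Phi(w)\notin\ov{\Phi(A)}$), and symmetrically for the inverse map. Once the nonvanishing of the two denominators is established on the respective compact sets, the $C^1$-bounds and hence the two-sided Lipschitz estimates are routine. A minor technical point is that a $C^1$ function on a neighbourhood of a compact set need not be globally Lipschitz on that set unless the neighbourhood can be taken convex; this is handled by shrinking to a ball around each point and using compactness, or by first replacing $U$ with a bounded convex open set containing $\ov K$ and missing $p$ — always possible since $p\notin\ov K$.
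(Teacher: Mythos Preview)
Your closed-form argument has a genuine gap at the origin. You claim the denominator $|y-|y|^2 w|^2$ vanishes only at $y=\Phi(w)$, but it also vanishes at $y=0$ (since $0-0\cdot w=0$). This matters because the proposition is intended precisely for \emph{unbounded} $A$---that is the whole purpose of this part of the paper---and when $A$ is unbounded the origin is an accumulation point of $K=\Phi(A)$, so $0\in\ov K$. Hence the denominator is \emph{not} bounded away from zero on any neighbourhood of $\ov K$, and your ``$C^1$ on a neighbourhood of the compact set $\ov K$'' argument breaks down as written. The same issue recurs for $f^{-1}$ on $\ov{\Psi(A)}$.

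The gap is repairable: the singularity of your closed form at $y=0$ is removable. Writing $p(y)=y-|y|^2 w$, one has $p(y)=y+O(|y|^2)$ near $0$, hence $|p(y)|^2=|y|^2(1+O(|y|))$ and $f(y)=|y|^2 p(y)/|p(y)|^2=y+O(|y|^2)$, so $f$ extends to a $C^1$ map at $0$ with $Df(0)=I$. Once this is added, your compactness argument goes through. The paper sidesteps the whole issue by bounding $\|f'(x)\|$ directly via the chain rule: it obtains $\|f'(x)\|\le C\,|a|^2/|a-w|^2$ with $a=\Phi(x)\in A$, and this quantity is bounded on $A$ because $w\notin\ov A$ gives a lower bound on $|a-w|$ for $a$ in any bounded region, while the ratio tends to $1$ as $|a|\to\infty$. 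Expressing the estimate in terms of $a\in A$ rather than $x\in\Phi(A)$ is exactly what makes the ``point at infinity'' ($x\to0$, $a\to\infty$) transparent. One smaller remark: your first alternative in step~(iii)---finding a \emph{convex} open set containing $\ov K$ and avoiding $p=\Phi(w)$---need not exist (imagine $\ov K$ an annulus with $p$ at its centre); it is your second alternative (local Lipschitz estimates on finitely many balls plus a diameter bound for distant pairs) that actually works.
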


\begin{proof}
Let us first show that $f$ is Lipschitzian. It suffices to show that
\bgeq\label{f'}
\sup_{x\in\Phi(A)}\|f'(x)||<\ty.
\endeq 
As the matrix norm $\|\,\cdot\,\|$ we take any operator norm, say $\ty$-norm.
First, by direct computation we see that
\bgeq
\Phi'(x)=\frac{|x|^2I-2x\otimes x}{|x|^4},\nonumber
\endeq
where $I$ is the identity matrix, and $x\otimes x=x\cdot x^{\top}$, with $x$ understood as a column vector. 
Now, denoting $a=\Phi(x)$ we have:
\bgeqn
f'(x)&=&\Psi'(\Phi(x))\cdot\Phi'(x)\nonumber\\
&=&\frac{|a-w|^2I-2(a-w)\otimes (a-w)}{|a-w|^{4}}\cdot \frac{|a|^{-2}I-2\frac{a}{|a|^2}\otimes \frac{a}{|a|^2}}{|a|^{-4}}\nonumber\\
&=&\frac{|a|^2}{|a-w|^2}\left(I-\frac{a-w}{|a-w|}\otimes\frac{a-w}{|a-w|}\right)
\cdot\left(I-2\frac a{|a|}\otimes \frac a{|a|}\right).\nonumber
\endeqn
Therefore $\|f'(x)\|\le C\frac{|a|^2}{|a-w|^2}$, where $C$ is a positive constant and $a=\Phi(x)$. Since $a\notin\ov A$ and $\frac{|a|^2}{|a-w|^2}\to1$ as $a\to\ty$, the expression $\frac{|a|^2}{|a-w|^2}$ is bounded by a constant independent of $a$. This proves ~(\ref{f'}).

 On the other hand, $f^{-1}=(\Psi\circ\Phi)^{-1}=\Phi^{-1}\circ\Psi^{-1}=\Phi\circ\Psi$, and we can show in the similar way that $f^{-1}$ is Lipschitzian. Hence, $f$ is bi-Lipschitzian.
\end{proof}

We can define the upper and lower $s$-dimensional Minkowski contents of $A$ as the corresponding upper and lower Minkowski contents of $\Phi(A)$, $s\ge0$. We say that $A$ is Minkowski nondegenerate (Minkowski measurable) if $\Phi(A)$ is nondegenerate (Minkowski
measurable). 
\end{defn}

\begin{remark}
It is easy to get rid of  the condition for $A$ to be away from the origin. Indeed, if $A$ is any set in $\eR^n$, we can proceed as follows.
Define $A_1=A\cap B_1(0)$ and $A_2=A\setminus A_1$, and define
\bgeq
\ov\dim_BA=\max\{\ov\dim_BA_1,\ov\dim_B\Phi(A_2)\}.\nonumber
\endeq
It is easy to see that the upper box dimension so defined for unbounded sets satisfies the property of monotonicity (indeed, if $A\stq B$ then $\Phi(A)\stq\Phi(B)$, hence $\dim_BA=\dim_B\Phi(A)\le\dim_B\Phi(B)=\dim_BB$), and the property of finite stability. See \cite{falc}.
\end{remark}

Another basic property, as expected, is that the box dimension is preserved for unbounded sets with positive distance from the origin that are bi-Lipschitz equivalent. 

\begin{theorem}\label{bilip0}
Let $f:V_1\to V_2$ be a bi-Lipschitz map, where $0\notin \ov V_1$ and $0\notin\ov V_2$. If $A\st V_1$, then $\ov\dim_BA=\ov\dim_Bf(A)$, and analogously, $\underline\dim_BA=\underline\dim_Bf(A)$.
\end{theorem}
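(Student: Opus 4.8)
The plan is to reduce the statement about unbounded sets to the classical bi-Lipschitz invariance of the box dimension for bounded sets (as in \cite{falc}), by transporting everything through the geometric inversion $\Phi$. By Definition~\ref{box} we have $\ov\dim_B A=\ov\dim_B\Phi(A)$ and $\ov\dim_B f(A)=\ov\dim_B\Phi(f(A))$, where both $\Phi(A)$ and $\Phi(f(A))$ are bounded sets (since $0\notin\ov V_1\cup\ov V_2$ guarantees $\Phi$ is well-defined and sends the relevant sets into bounded regions). So it suffices to exhibit a bi-Lipschitz map between $\Phi(A)$ and $\Phi(f(A))$, and then invoke the bounded case.

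First I would consider the composite map $g=\Phi\circ f\circ\Phi^{-1}=\Phi\circ f\circ\Phi$, defined on $\Phi(V_1)$ with image $\Phi(V_2)$ (using involutivity $\Phi^{-1}=\Phi$). The claim is that $g$ is bi-Lipschitz on $\Phi(V_1)$. This follows from the fact that $\Phi$ restricted to a set bounded away from the origin \emph{and} bounded in norm is bi-Lipschitz: indeed, $\Phi'(x)=\dfrac{|x|^2 I-2x\otimes x}{|x|^4}$, so $\|\Phi'(x)\|\le C|x|^{-2}$, and on a set where $c_1\le|x|\le c_2$ this derivative is bounded above and below (the lower bound again by involutivity, since $\Phi^{-1}=\Phi$ has the same form). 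One must check that $\Phi(V_1)$, $f(\Phi(V_1)^{c}\cdots)$ — more precisely the intermediate sets $V_1$, $f(V_1)$, and their $\Phi$-images — all stay in such an annular region; this uses $0\notin\ov V_1$, $0\notin\ov V_2$ for the lower norm bound, and I would restrict attention to $A$ (and note that by monotonicity one may shrink $V_1,V_2$ to, say, bounded neighbourhoods of $A$ and $f(A)$, or simply argue locally since Lipschitz constants only need to be controlled on the sets actually used). Composing three bi-Lipschitz maps gives that $g:\Phi(V_1)\to\Phi(V_2)$ is bi-Lipschitz, hence so is its restriction $\Phi(A)\to\Phi(f(A))$.

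Then I would apply the classical result (\cite{falc}) that bi-Lipschitz maps preserve upper and lower box dimension of bounded sets: $\ov\dim_B\Phi(A)=\ov\dim_B g(\Phi(A))=\ov\dim_B\Phi(f(A))$, and unwinding the definitions gives $\ov\dim_B A=\ov\dim_B f(A)$; the lower-dimension statement is identical. The main obstacle is the bookkeeping needed to ensure that $\Phi$ is bi-Lipschitz on all the sets it is applied to — that is, that the norms of the relevant points are bounded both below (away from $0$, from the hypotheses $0\notin\ov V_1$, $0\notin\ov V_2$, and the Lipschitz property of $f$ preventing $f(A)$ from reaching $0$ — wait, this needs $0\notin\ov V_2$, which is assumed) and above (so that $\Phi^{-1}=\Phi$ is also Lipschitz there). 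A clean way to handle the upper bound is to observe it is not actually needed in the form stated: $\Phi$ is Lipschitz on $\{|x|\ge c_1\}$ and its inverse $\Phi$ is Lipschitz on $\{|x|\ge c_1'\}$ where $c_1'$ comes from the upper norm bound on $V_i$ — but $V_i$ need not be bounded. The resolution is that $\Phi(V_i)$ is automatically bounded away from... no: $\Phi$ maps large-norm points to small-norm points. So the correct statement is: $\Phi$ is bi-Lipschitz on any annulus $\{c_1\le|x|\le c_2\}$, and one reduces to this by splitting $A$ into a bounded part and a part near infinity, handling the near-infinity part via the fact that $\Phi$ of it lies in a small ball around $0$ where the map $g$ is genuinely bi-Lipschitz because near $\infty$ in the $x$-variable corresponds to near $0$ in the $\Phi$-variable and $f$ extends continuously; this is exactly the style of estimate carried out in the proof of the preceding Proposition, so I would cite or mirror that computation.
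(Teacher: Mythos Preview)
Your overall architecture is exactly the paper's: set $g=\Phi\circ f\circ\Phi:\Phi(V_1)\to\Phi(V_2)$, prove $g$ is bi-Lipschitz, and then invoke the classical bi-Lipschitz invariance for bounded sets to conclude
\[
\ov\dim_BA=\ov\dim_B\Phi(A)=\ov\dim_B g(\Phi(A))=\ov\dim_B\Phi(f(A))=\ov\dim_B f(A).
\]
That part is fine and matches the paper's Proposition~\ref{bilip} and the short deduction that follows it.

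The gap is in how you try to prove that $g$ is bi-Lipschitz. You attempt to write $g$ as a composition of three bi-Lipschitz maps, arguing that $\Phi$ is bi-Lipschitz on annuli $\{c_1\le|x|\le c_2\}$. But in the setting of the theorem $V_1$ and $V_2$ are neighbourhoods of infinity, so they are \emph{unbounded}, and $\Phi$ is genuinely \emph{not} bi-Lipschitz on them: $\|\Phi'(x)\|\simeq|x|^{-2}\to 0$ as $|x|\to\infty$. You notice this yourself, and your attempted resolution (splitting into a bounded part and a part near infinity, then appealing vaguely to ``the style of estimate'' in the earlier proposition) does not close the hole: on the far part the inner $\Phi$ contracts by a factor $\simeq|a|^{-2}$ while the outer $\Phi$ expands by a factor $\simeq|f(a)|^{2}$, and there is no way to control these separately.

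The paper's key idea, which replaces this, is to treat the composite $g$ \emph{as a whole} using two exact ingredients: the identity (Lemma~\ref{id})
\[
|\Phi(a)-\Phi(b)|=\frac{|a-b|}{|a|\,|b|},
\]
and the comparability (Lemma~\ref{bdd}) $|f(a)|\simeq|a|$, which follows from $f$ being bi-Lipschitz together with $0\notin\ov V_1\cup\ov V_2$. With these one computes, for $x=\Phi(a)$, $y=\Phi(b)$,
\[
|g(x)-g(y)|=\frac{|f(a)-f(b)|}{|f(a)|\,|f(b)|}\simeq\frac{|a-b|}{|a|\,|b|}=|\Phi(a)-\Phi(b)|=|x-y|,
\]
so the two badly-behaved $\Phi$'s cancel against each other through the ratio $|a|\,|b|/(|f(a)|\,|f(b)|)\simeq 1$. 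This is the missing step in your argument; once you insert it, the rest of your write-up goes through unchanged.
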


\begin{remark} It is possible to construct
a set $A$ in $\eR^n$ such that $\ov\dim_BA=n$ and $\underline\dim_BA=0$, see \cite{minkows}.
\end{remark}

Theorem~\ref{bilip0} follows immediately from the following proposition, the proof of which we postpone.

\begin{prop}\label{bilip}
Let $V_{1,2}$ be two neighbourhoods of $\ty$ in $\eR^n$ such that $0\notin \ov V_1$ and $0\notin\ov V_2$. The mapping $f:V_1\to V_2$ is bi-Lipschitzian if and only if the mapping 
$g:\Phi(V_1)\to \Phi(V_2)$ defined by 
\bgeq\label{ffi}
g(x)=(\Phi\circ f\circ\Phi)(x)
\endeq
is bi-Lipschitzian. 
\end{prop}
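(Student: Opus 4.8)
The plan is to prove Proposition~\ref{bilip} by showing that the conjugating map $g=\Phi\circ f\circ\Phi$ is bi-Lipschitz precisely when $f$ is, exploiting the fact that $\Phi$ itself is bi-Lipschitz on any set that is bounded and bounded away from the origin. Since $\Phi$ is involutive, the statement is symmetric in $f$ and $g$, so it suffices to prove one direction: if $f$ is bi-Lipschitz then so is $g$ (the converse follows by applying this to $g$ in place of $f$ and using $\Phi\circ g\circ\Phi=f$). Thus the whole argument reduces to two ingredients: (i) $\Phi$ restricted to a set $W$ with $0\notin\ov W$ and $W$ bounded is bi-Lipschitz onto $\Phi(W)$; and (ii) bi-Lipschitz maps compose, so a composition of three bi-Lipschitz maps is bi-Lipschitz.

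First I would establish (i). On the region $\{r_1\le|x|\le r_2\}$ with $0<r_1<r_2<\ty$ one computes, as already done in the proof of the preceding proposition, that $\Phi'(x)=\tfrac{1}{|x|^2}\bigl(I-2\tfrac{x}{|x|}\otimes\tfrac{x}{|x|}\bigr)$, whose operator norm equals $|x|^{-2}$; hence $\|\Phi'(x)\|\le r_1^{-2}$ on this annular region, so $\Phi$ is Lipschitz there with constant $r_1^{-2}$ by the mean value inequality (the annulus is convex-enough — or one passes through line segments that stay in a slightly larger annulus still avoiding the origin; since we only need the bound on neighbourhoods of $\infty$, which one may take to be complements of balls, connectivity and the gradient bound suffice). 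Because $\Phi$ is involutive, $\Phi^{-1}=\Phi$ maps $\Phi(W)$ — which is again bounded away from the origin and bounded, since $|\Phi(x)|=|x|^{-1}\in[r_2^{-1},r_1^{-1}]$ — back, with Lipschitz constant $r_2^{2}$ by the same computation. Hence $\Phi|_W$ is bi-Lipschitz. I would record this as a short lemma or simply inline it.

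Then, given $f:V_1\to V_2$ bi-Lipschitz with $V_{1,2}$ neighbourhoods of $\infty$ away from the origin, I would note that we may assume $V_{1,2}=\{|x|>R\}$ for some $R>0$ (shrinking does not affect the bi-Lipschitz property of $f$ on $V_1$, and the statement about $g$ is on $\Phi(V_1)$). The bi-Lipschitz condition on $f$ forces $|f(x)|\to\infty$ as $|x|\to\infty$, so $\Phi(V_1)$ and $\Phi(V_2)$ are punctured neighbourhoods of the origin, i.e. bounded sets bounded away from the origin once we note $|f(x)|\simeq|x|$ is false in general — rather, we only know $|f(x)-f(x_0)|\simeq|x-x_0|$, but combined with $f(V_1)=V_2\subseteq\{|y|>R\}$ this still gives that $\Phi(V_2)$ is bounded and $0\notin\ov{\Phi(V_2)}$ follows from $V_2$ being a neighbourhood of $\infty$. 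Now write $g=\Phi\circ f\circ\Phi$ as a composition of the bi-Lipschitz maps $\Phi\colon\Phi(V_1)\to V_1$, $f\colon V_1\to V_2$, and $\Phi\colon V_2\to\Phi(V_2)$; since each factor is bi-Lipschitz (the first and third by step (i), the middle by hypothesis), $g$ is bi-Lipschitz with constants the products of the respective constants. The converse direction is identical with $f$ and $g$ interchanged.

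The main obstacle I anticipate is not analytic but definitional bookkeeping: one must be careful that the sets on which each $\Phi$ is applied really are both bounded and bounded away from the origin, so that the uniform gradient bound $\|\Phi'\|\le r_1^{-2}$ applies. For $\Phi(V_1)$ this is clear since $V_1$ is a neighbourhood of $\infty$ missing a ball around $0$; for the image side one needs the observation that a bi-Lipschitz $f$ sending a neighbourhood of $\infty$ onto another one is proper, so $V_2$ is likewise squeezed between two spheres from the inside, making $\Phi(V_2)$ a bounded set away from the origin. A secondary (purely cosmetic) point is the convexity issue in applying the mean value inequality on an annulus or exterior-of-ball domain: since these domains are connected and the gradient is uniformly bounded, one gets the Lipschitz estimate along any rectifiable path, and restricting to neighbourhoods of $\infty$ of the form $\{|x|>R\}$ (which one is free to do) the estimate is cleanest. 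Once these are in place the proof is a three-line composition argument.
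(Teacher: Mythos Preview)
Your composition strategy has a genuine gap. Your ingredient (i) shows that $\Phi$ is bi-Lipschitz on a set $W$ that is \emph{both} bounded and bounded away from the origin. But neither of the sets on which you then invoke it satisfies both conditions. The outer $\Phi$ in $g=\Phi\circ f\circ\Phi$ is applied on $V_2$, which is a neighbourhood of $\ty$ and hence unbounded; there $\|\Phi'(x)\|=|x|^{-2}\to0$, so $\Phi$ is Lipschitz but its inverse is not, and $\Phi|_{V_2}$ is \emph{not} bi-Lipschitz. Dually, the inner $\Phi$ is applied on $\Phi(V_1)$, which---precisely because $V_1$ is a neighbourhood of $\ty$---is a punctured neighbourhood of the origin, so $0\in\ov{\Phi(V_1)}$ and $\|\Phi'\|$ blows up there. (Your sentence ``$0\notin\ov{\Phi(V_2)}$ follows from $V_2$ being a neighbourhood of $\ty$'' has it exactly backwards.) Thus the three factors in your decomposition are not individually bi-Lipschitz, and the product-of-constants argument collapses.

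What makes the proposition true is a cancellation between the two $\Phi$'s that you cannot see at the level of individual Lipschitz constants. The paper exploits the exact identity (Lemma~\ref{id}) $|\Phi(a)-\Phi(b)|=|a-b|/(|a|\,|b|)$: applying it once to $\Phi(x),\Phi(y)$ and once to $f(\Phi(x)),f(\Phi(y))$ produces the ratio $\dfrac{|\Phi(x)|\,|\Phi(y)|}{|f(\Phi(x))|\,|f(\Phi(y))|}$, and this is uniformly bounded above and below by Lemma~\ref{bdd}, which says $|f(a)|\simeq|a|$ for a bi-Lipschitz $f$ between sets bounded away from the origin. In other words, the unbounded distortion of the outer $\Phi$ is exactly compensated by that of the inner $\Phi$, and the hypothesis on $f$ is what locks the two scales together. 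Your argument would be rescued if you replaced the derivative bound by Lemma~\ref{id} and inserted Lemma~\ref{bdd}; but then it is essentially the paper's proof.
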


\begin{proof}[Proof of Theorem~\ref{bilip0}]
We have that $\Phi(A)\st\Phi(V_1)$, and using Proposition~\ref{bilip} we obtain that $\Phi(A)\simeq g(\Phi(A))$, with $g$ defined by (\ref{ffi}). Hence,
\bgeqn
\ov\dim_B A&=&\ov\dim_B\Phi(A)=\ov\dim_B g(\Phi(A))=\ov\dim_B(\Phi\circ f\circ\Phi^2)(A)\nonumber\\
&=&\ov\dim_B\Phi(f(A))=\ov\dim_B f(A).\nonumber
\endeqn
In the last equality we exploited the property of bi-Lipschitz invariance of the upper box dimension for bounded sets.
Analogously for the lower box dimension.
\end{proof}

To prove Proposition \ref{bilip}, we start with the following two lemmas.

\begin{lemma}\label{id}
For any $a,b\in\eR^{n}\setminus\{0\}$ we have
\bgeq\label{id0}
|\Phi(a)-\Phi(b)|=\frac{|a-b|}{|a|\,|b|},\nonumber
\endeq
where $|\,\,|$ is the Euclidean norm.
\end{lemma}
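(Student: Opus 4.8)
The plan is to prove Lemma~\ref{id} by a direct computation, using the defining formula $\Phi(x)=x/|x|^2$ together with the bilinearity of the Euclidean inner product. First I would write
\bgeq
|\Phi(a)-\Phi(b)|^2=\left|\frac a{|a|^2}-\frac b{|b|^2}\right|^2
=\frac{|a|^2}{|a|^4}-2\,\frac{a\cdot b}{|a|^2|b|^2}+\frac{|b|^2}{|b|^4}
=\frac1{|a|^2}-\frac{2\,a\cdot b}{|a|^2|b|^2}+\frac1{|b|^2}.\nonumber
\endeq

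Next I would place this over the common denominator $|a|^2|b|^2$, obtaining
\bgeq
|\Phi(a)-\Phi(b)|^2=\frac{|b|^2-2\,a\cdot b+|a|^2}{|a|^2|b|^2}=\frac{|a-b|^2}{|a|^2\,|b|^2},\nonumber
\endeq
where in the last step I recognise the numerator as $|a-b|^2$ by the polarisation identity $|a-b|^2=|a|^2-2\,a\cdot b+|b|^2$. Taking square roots, and noting that $|a|,|b|>0$ since $a,b\in\eR^n\setminus\{0\}$, yields exactly the claimed identity $|\Phi(a)-\Phi(b)|=|a-b|/(|a|\,|b|)$.

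There is essentially no obstacle here: the statement is an elementary metric property of geometric inversion, and the only thing to be careful about is that all denominators are nonzero, which is guaranteed by the hypothesis $a,b\neq0$. This identity is the key quantitative tool that will feed into the proof of Proposition~\ref{bilip}, since it lets one compare $|f(a)-f(b)|$ with $|g(\Phi(a))-g(\Phi(b))|$ by tracking only the scalar factors $|a|$, $|b|$, $|f(a)|$, $|f(b)|$, all of which are bounded above and below on neighbourhoods of $\ty$ that stay away from the origin.
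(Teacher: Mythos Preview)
Your proof is correct; it is the standard direct computation using $|u-v|^2=|u|^2-2u\cdot v+|v|^2$. The paper actually omits the proof of this lemma entirely (writing ``We omit the proofs of the previous lemmas''), so your argument supplies exactly the routine verification the authors left to the reader.
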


\begin{lemma}\label{bdd}
Let $f:V_1\to V_2$ be a bi-Lipschitz map, where $0\notin \ov V_1$ and $0\notin \ov V_2$. Then there exist two positive constants $C_1$ and $C_2$ such that
for all $a\in V_1$,
\bgeq\label{faa}
C_1\le \frac{|f(a)|}{|a|}\le C_2.\nonumber
\endeq
\end{lemma}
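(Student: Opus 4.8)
The plan is to unpack the bi-Lipschitz hypothesis into the two-sided estimate
\[
c_1|a-b|\le |f(a)-f(b)|\le c_2|a-b|\qquad\text{for all }a,b\in V_1,
\]
with positive constants $c_1,c_2$, and to play it against the two hypotheses on distance from the origin. Since $0\notin\ov V_1$ and $0\notin\ov V_2$, the quantities $\delta_1=\inf_{a\in V_1}|a|$ and $\delta_2=\inf_{y\in V_2}|y|$ are strictly positive; hence $|a|\ge\delta_1$ for every $a\in V_1$ and, because $f(a)\in V_2$, also $|f(a)|\ge\delta_2$. The genuine obstacle is that $V_1$ and $V_2$ are allowed to be unbounded (they are neighbourhoods of $\ty$ in the application of Proposition~\ref{bilip}), so there is no compactness to lean on and one must control $|f(a)|/|a|$ uniformly even as $|a|\to\ty$.

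First I would prove the upper bound by a fixed-base-point argument. Choosing an arbitrary $a_0\in V_1$ and applying the triangle inequality together with the upper Lipschitz estimate,
\[
|f(a)|\le |f(a_0)|+|f(a)-f(a_0)|\le |f(a_0)|+c_2\bigl(|a|+|a_0|\bigr).
\]
Dividing by $|a|$ and using $|a|\ge\delta_1$ to absorb the constant term into a multiple of $|a|$ gives
\[
\frac{|f(a)|}{|a|}\le c_2+\frac{|f(a_0)|+c_2|a_0|}{\delta_1}=:C_2,
\]
a bound independent of $a$. Observe that this step uses only the one-sided (upper) Lipschitz constant together with the positivity of $\delta_1$; the linear growth furnished by the Lipschitz condition is exactly what keeps the ratio bounded in the unbounded regime.

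For the lower bound I would exploit the symmetry of the situation rather than run a more delicate direct estimate. Since $f$ is bi-Lipschitz it is injective, and its inverse $f^{-1}$, defined on $f(V_1)\stq V_2$, is again bi-Lipschitz with upper Lipschitz constant $1/c_1$; moreover $0\notin\ov{f(V_1)}$ because $f(V_1)\stq V_2$ forces $\ov{f(V_1)}\stq\ov V_2$. Applying the upper-bound estimate just established to the map $f^{-1}$, whose domain is bounded away from the origin by $\delta_2$, yields a constant $C'>0$ with $|f^{-1}(y)|\le C'|y|$ for all $y\in f(V_1)$. Taking $y=f(a)$ gives $|a|\le C'|f(a)|$, that is
\[
\frac{|f(a)|}{|a|}\ge\frac1{C'}=:C_1>0,
\]
which is the desired lower bound.

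In summary, the whole content reduces to one elementary estimate — the triangle-inequality growth bound $|g(a)|\le C|a|$ valid for any bi-Lipschitz $g$ on a set away from the origin — applied once to $f$ and once to $f^{-1}$. The only point requiring care is the uniformity as $|a|\to\ty$, and it is resolved precisely by the interplay between the linear Lipschitz control and the uniform lower bounds $|a|\ge\delta_1$ and $|f(a)|\ge\delta_2$; passing to $f^{-1}$ then makes the lower bound a cost-free consequence of the upper one. (Alternatively, the lower bound can be obtained directly by splitting into the regimes $|a|$ large, where $|f(a)|\ge c_1|a|-\text{const}$ dominates, and $|a|$ bounded, where $|f(a)|\ge\delta_2$ suffices, but the inversion argument is shorter.)
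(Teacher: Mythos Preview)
Your argument is correct. The paper actually omits the proof of this lemma entirely (it writes ``We omit the proofs of the previous lemmas and proceed to prove the proposition''), so there is no approach to compare against; your triangle-inequality estimate using a fixed base point, the lower bound $|a|\ge\delta_1$, and then symmetry via $f^{-1}$ is a clean and complete justification.
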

We omit the proofs of the previous lemmas and proceed to prove the proposition.
\begin{proof}[Proof of Proposition~\ref{bilip}]
Assume that $f$ is bi-Lipschitzian, i.e.\ $|f(a)-f(b)|\simeq|a-b|$ for all $a,b\in V_1$. Let $x=\Phi(a)$ and $y=\Phi(b)$ be any two elements from $\Phi(V_1)$.
Using Lemma~\ref{id} we have
$$
|f(\Phi(x))-f(\Phi(y))|\simeq|\Phi(x)-\Phi(y)|=\frac{|x-y|}{|x|\,|y|}=|x-y|\,|\Phi(x)|\,|\Phi(y)|.
$$
Therefore,
$$
\frac{|f(\Phi(x))-f(\Phi(y))|}{|f(\Phi(x))|\,|f(\Phi(y))|}\simeq|x-y|\frac{|\Phi(x)|\,|\Phi(y)|}{|f(\Phi(x))|\,|f(\Phi(y))|}.
$$
Applying Lemma~\ref{id} on the left-hand side, and Lemma~\ref{bdd} on the right-hand side, we obtain
$$
|\Phi(f(\Phi(x)))-\Phi(f(\Phi(y)))|\simeq|x-y|,
$$
i.e.\ $|g(x)-g(y)|\simeq|x-y|$ for all $x,y\in \Phi(V_1)$.

The proof of the converse implication is similar, and therefore we omit it.
\end{proof}

If $\C$ is a smooth curve (typically, an unbounded spiral) in $\eR^n$ converging to infinity, which does not pass through the origin, then we can define its
Minkowski content as follows. Assume that $d=\dim_B\C$ is well defined. Then we define  $\M^d(\C)=\M^d(\Phi(\C))$. Note that the right-hand side is well defined, since 
the set $\Phi(\C)$ is bounded. Furthermore, if we remove from $\C$ a portion of finite length, then the remaining part $\C_r$ has the same $d$-dimensional Minkowski content as $\C$. 
This is due to the excision lemma, see \cite[Lemma 5.6]{minkows}.

\begin{example}\label{a-string}
Let $\a$ be a given positive real number, and $A=\{k^{\a}:k\in\eN\}$. It is well known that the box dimension of $\Phi(A)=\{k^{-\a}:k\in\eN\}$ is equal to $1/(1+\a)$, see e.g.\
\cite{lapidus}. Therefore,
\bgeq
\dim_BA=\frac1{1+\a}.\nonumber
\endeq
\end{example}

The above example is a special case of the following result dealing with monotone strings $\cal L=(l_j)$ of infinite length, i.e.\ sequences of positive real numbers such that
$\sum_{j=1}^{\ty}l_j=\ty$ and $(l_j)$ is nonincreasing. We do not require that $l_j\to0$ as $j\to\ty$. This string is associated with an unbounded sequence of real numbers
$A=(a_k)$ defined by $a_k=\sum_{j=1}^kl_j$. Conversely, it is clear that a nondecreasing, unbounded sequence of real numbers $A=(a_k)$ defines the string $\cal L=(l_j)_j$, where 
$l_j=a_{j+1}-a_{j}$, and the string $\cal L=(l_j)$ is monotone if we require that $l_j$ is nonincreasing. Note that here the set $\Phi(A)=\{a_k^{-1}:k\in\eN\}$ is bounded, so that the classical box dimension makes sense. If we denote 
\bgeq\label{mu}
\mu_k=a_k^{-1}-a_{k+1}^{-1},\q\cal L'=(\mu_k)
\endeq 
then
\bgeq\label{B}
\ov\dim_B\Phi(A)=\ov\dim_B\cal L':=\inf\{\c>0:\sum_{j=1}^\ty\mu_j^{\c}<\ty\}.
\endeq
See \cite{lapidus}, where the right-hand side of (\ref{B}) is taken as the definition of the upper box dimension of a general bounded, monotone string $\cal L'=(\mu_k)$,
denoted by $D_{\cal L'}$ in this reference. Note that since $\sum_{j=1}^\ty\mu_j<\ty$, then $\ov\dim_B\Phi(A)\le1$. The following simple lemma provides a sufficient condition for a
string associated with the geometric inverse of an unbounded set to be monotone.

\begin{lemma}\label{string}
Let $A=(a_k)$ be an unbounded, monotonically nondecreasing sequence of positive numbers. The string $\cal L'=(\mu_k)$, defined by (\ref{mu}), 
is monotone if and only if for each $k\ge1$,
\bgeq\label{2}
\frac{a_{k+1}}{a_k}+\frac{a_{k+1}}{a_{k+2}}\ge2.
\endeq
Furthermore,
\bgeq
\ov\dim_BA=\ov\dim_B\cal L'.\nonumber
\endeq
\end{lemma}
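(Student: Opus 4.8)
The plan is to prove the two claims separately. For the monotonicity criterion, I would simply unwind the definition: the string $\cal L'=(\mu_k)$ with $\mu_k=a_k^{-1}-a_{k+1}^{-1}$ is monotone (i.e.\ nonincreasing) precisely when $\mu_k\ge\mu_{k+1}$ for every $k\ge1$, that is,
$$
a_k^{-1}-a_{k+1}^{-1}\ge a_{k+1}^{-1}-a_{k+2}^{-1}.
$$
Rearranging gives $a_k^{-1}+a_{k+2}^{-1}\ge 2a_{k+1}^{-1}$, and multiplying through by $a_{k+1}>0$ yields exactly (\ref{2}). Since $A$ is unbounded and nondecreasing, all terms $a_k$ are eventually positive, and the chain of equivalences is reversible, so this is an ``if and only if''. (One should note that $\mu_k>0$ for all $k$, which holds because $A$ is strictly increasing wherever it is not eventually constant — and an unbounded nondecreasing sequence of positive reals cannot be eventually constant; a brief remark on the trivial case where some consecutive $a_k$ coincide may be needed to keep $\mu_k\ge0$, but then (\ref{2}) should be read with the understanding that equality is allowed.)

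For the dimension formula $\ov\dim_BA=\ov\dim_B\cal L'$, I would chain together the identities already established in the excerpt. By Definition~\ref{box}, $\ov\dim_BA=\ov\dim_B\Phi(A)$, where $\Phi(A)=\{a_k^{-1}:k\in\eN\}$ is bounded. Then I would invoke the displayed equality (\ref{B}), namely $\ov\dim_B\Phi(A)=\ov\dim_B\cal L'$, which holds with $\cal L'=(\mu_k)$ as in (\ref{mu}). Concatenating these gives $\ov\dim_BA=\ov\dim_B\cal L'$ directly. The point worth emphasising is that (\ref{B}) — which identifies the box dimension of the bounded set $\Phi(A)$ with the exponent $\inf\{\gamma>0:\sum\mu_j^\gamma<\infty\}$ — is the formula for the box dimension of a monotone string from \cite{lapidus}, and it applies here whether or not $\cal L'$ is monotone, since the right-hand side of (\ref{B}) makes sense for any summable positive sequence and equals the upper box dimension of the associated set; so the ``furthermore'' part of the lemma does not actually require the monotonicity hypothesis (\ref{2}). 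If one wants to be careful and restrict to the monotone case, the formula is exactly Example~\ref{a-string} generalised.

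The main obstacle, such as it is, lies not in the algebra but in pinning down the precise hypotheses under which (\ref{B}) is literally the statement in \cite{lapidus}: that reference takes $\inf\{\gamma>0:\sum\mu_j^\gamma<\infty\}$ as the \emph{definition} of the box dimension of a bounded monotone string, and one must check that this coincides with the Minkowski-content definition (\ref{dim})–(\ref{Mty}) applied to the set $\Phi(A)$. This is standard (it is the content of the string/fractal-drum correspondence), so I would cite it rather than reprove it. Thus the only genuinely new content of the lemma is the elementary rearrangement giving (\ref{2}), and I would present the proof in two short paragraphs: first the rearrangement establishing the monotonicity criterion, then the one-line concatenation of Definition~\ref{box} with (\ref{B}) for the dimension formula.
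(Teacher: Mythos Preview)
Your proposal is correct and matches the paper's approach exactly: the paper's proof consists of the single sentence ``It is easy to check that $\mu_{k+1}\le\mu_{k}$ is equivalent with (\ref{2}),'' which is precisely your rearrangement, and the dimension identity is not argued separately there either---it is understood to follow from Definition~\ref{box} together with (\ref{B}), just as you say. Your additional remarks about the hypotheses under which (\ref{B}) coincides with the Minkowski-content definition are more careful than the paper, but not a different route.
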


\begin{proof} It is easy to check that $\mu_{k+1}\le\mu_{k}$ is equivalent with (\ref{2}). 
\end{proof}

\begin{example} For $a_k=k^{\a}$, where $\a$ is a fixed positive number, the condition (\ref{2}) is fulfilled,
since
$$
\frac{b_{k+1}}{b_k}+\frac{b_{k+1}}{b_{k+2}}=(1+k^{-1})^\a+(1+(k+1)^{-1})^{-\a}>(1+k^{-1})^\a+(1+k^{-1})^{-\a}>2,
$$
where the last inequality follows from the elementary inequality $t+t^{-1}>2$ for $t>1$.
Therefore the conclusion of Example \ref{a-string} is a special case of Lemma~\ref{string}, since for $\a$-strings $\cal L'=(k^{-\a}-(k+1)^{-\a})_{k\ge1}$ we have
$\ov\dim_B\cal L'=1/(1+\a)$, see \cite{lapidus}.
\end{example}

\begin{example}
Let $\C$ be a spiral defined in polar coordinates by $r=\f^{-\a}$, where $\f\ge\f_0>0$, and $\a$ is a given positive constant.
Then $\Phi(\C)$ is an unbounded spiral defined by $r=\f^{\a}$, where $\f\ge\f_0>0$. We have
$$
\dim_B\Phi(\C)=\max\{1,\frac2{1+\a}\},
$$
see \cite[p.\  121]{tricot}. Note that the nucleus of the spiral $\C$ is concentrated near the origin,
so that the nucleus of $\Phi(\C)$ is concentrated at infinity. For a strict definition of the nucleus see \cite{tricot}, intuitively, it is the part where the $\e$-neighbourhood of the spiral selfintersects. See Figure \ref{unbounded_spiral}.
\end{example}

\begin{example}
Let $\a$ and $\b$ be two given positive constants. Let $A$ be defined as the union of two spirals $\C_1$ and $\C_2$, defined  in polar coordinates as follows:
$\C_1\dots r=\f^{-\a}$ when $\f>1$ (bounded spiral tending to the origin), while $\C_2\dots r=\f^{\b}$ when $\f>1$ (unbounded spiral, away from the origin).
It is easy to see, using finite stability of the box dimension, that
\bgeq
\dim_BA=\max\left\{1,\frac2{1+\min\{\a,\b\}}\right\}.\nonumber
\endeq
\end{example}

Starting from $\dot x=P(x)$, see (\ref{x}), using geometric inversion we arrived at
$\dot u=P^*(u)$ where
\bgeq\label{P*}
P^*(u)=|u|^2\tilde P(u)-2u(u\cdot\tilde P(u)).
\endeq
It is clear that $P^{**}=P$ for each vector field $P$, since the geometric inversion with respect to the origin is involutive.
It is easy to see that 
$$
{}^*:C^1(\eR^n\setminus\{0\},\eR^n)\to C^1(\eR^n\setminus\{0\},\eR^n)
$$ 
is a linear operator with real coefficients: 
for any $\g,\mu\in\eR$ and $F,G\in C^1(\eR^n\setminus\{0\},\eR^n)$ we have $(\g F+\mu G)^*=\g F^*+\mu G^*$.

\begin{remark}
If in (\ref{x}) $P(x)$ is a rational function (that is, the component functions are rational functions of $x_j$, $j=1,\dots,n$), 
then from (\ref{P*}) we see that $P^*(u)$ is also a rational function. 
The phase portrait of the system (\ref{x}) is the same (outside the origin) as for the polynomial system corresponding to $d(x)P(x)$, where $d(x)$ is the common denominator of all $P_j(x)$. Analogously for the system (\ref{u}).
\end{remark}

The following lemma deals with a special class of right-hand sides $P(x)$ of (\ref{x}) for which $P^*(u)$ can be easily computed.

\begin{lemma}\label{Rx}
Let us consider the system (\ref{x}) with $P(x)=Rx-\c x g(|x|)$, $x\in\eR^n$, where $\c$ is a real constant and $g:(0,\ty)\to\eR$ a continuous function,
and $R$ is an $n\times n$ real antisymmetric matrix: $R^\top=-R$.
Then 
\bgeq
P^*(u)=Ru+\c u g(|u|^{-1}).\nonumber
\endeq
\end{lemma}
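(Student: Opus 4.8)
The plan is to compute $P^*(u)$ directly from the definition \eqref{P*}, namely $P^*(u)=|u|^2\tilde P(u)-2u(u\cdot\tilde P(u))$, where $\tilde P(u)=P(u/|u|^2)$, and to exploit the two special features of the given right-hand side: linearity of the operator ${}^*$ (already noted in the excerpt), and the antisymmetry of $R$. First I would split $P(x)=Rx-\c x g(|x|)$ into its two summands and treat each under ${}^*$ separately.

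For the linear term, set $F(x)=Rx$. Then $\tilde F(u)=R(u/|u|^2)=Ru/|u|^2$, so $|u|^2\tilde F(u)=Ru$, and the second term is $-2u\bigl(u\cdot(Ru/|u|^2)\bigr)=-\frac{2}{|u|^2}u\,(u^\top R u)$. Here is where antisymmetry enters: $u^\top R u=0$ for every $u$ because $R^\top=-R$ forces the quadratic form $u\mapsto u^\top Ru$ to vanish identically. Hence $F^*(u)=Ru$; the rotational part is simply preserved by geometric inversion. For the radial term, set $G(x)=-\c x g(|x|)$. Then $\tilde G(u)=-\c\,(u/|u|^2)\,g(|u/|u|^2|)=-\c\,(u/|u|^2)\,g(1/|u|)$, using $|u/|u|^2|=1/|u|$. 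Thus $|u|^2\tilde G(u)=-\c\,u\,g(|u|^{-1})$, and the correction term is $-2u\bigl(u\cdot(-\c\,u\,g(|u|^{-1})/|u|^2)\bigr)=2\c\,u\,g(|u|^{-1})\,\frac{u\cdot u}{|u|^2}=2\c\,u\,g(|u|^{-1})$. Adding, $G^*(u)=-\c\,u\,g(|u|^{-1})+2\c\,u\,g(|u|^{-1})=\c\,u\,g(|u|^{-1})$. Note the sign flip on the radial coefficient: $-\c$ becomes $+\c$.

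Finally, by linearity of ${}^*$, $P^*(u)=F^*(u)+G^*(u)=Ru+\c\,u\,g(|u|^{-1})$, which is the claimed formula. I would also record the identity $|u/|u|^2|=1/|u|$ explicitly since it is the one nonobvious elementary fact used twice.

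The only genuine point of care — as opposed to routine algebra — is the vanishing of $u^\top R u$, which is exactly the place the antisymmetry hypothesis is used and is what makes the rotational part invariant; everything else is bookkeeping with the scalar factors $|u|^{\pm2}$ and the cancellation $2(u\cdot u)/|u|^2=2$ in the radial term. So I do not anticipate a real obstacle; the proof is a short verification, and in the write-up I would simply carry out the two computations above and invoke linearity to combine them.
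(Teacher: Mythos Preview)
Your proof is correct and follows essentially the same approach as the paper: both compute $\tilde P(u)=|u|^{-2}\bigl(Ru-\c u\,g(|u|^{-1})\bigr)$, plug into the definition \eqref{P*}, and use the antisymmetry of $R$ via $u\cdot Ru=0$. The only cosmetic difference is that you split $P$ into its rotational and radial parts and apply linearity of ${}^*$, whereas the paper handles $\tilde P$ in one piece; the underlying computation is identical.
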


\begin{proof} The matrix $R$ is antisymmetric if and only if $Rx\cdot x=0$ for all $x$.
The claim follows from (\ref{P*}) and $\tilde Pu=|u|^{-2}(Ru-\c u g(|u|^{-1}))$ after a short computation.
\end{proof}

\begin{example}
In particular, if $P(x)=Rx$, where $R$ is a real antisymmetric matrix, then $P^*(u)=Ru$, that is, $P=P^*$. If $P(x)=c x$, where $c$ is a real constant, then $P^*(u)=-cu$.
\end{example}

A typical example of a real matrix $R$ satisfying the condition $Rx\cdot x=0$ for all $x\in\eR^n$ is any diagonal block matrix containing either matrices of the form
\bgeq
\g_j
\begin{bmatrix}
0&-1\\
1&\phantom{-}0
\end{bmatrix}\nonumber
\endeq
on the diagonal (here $\g_j\in\eR$), or zeros. 

If we deal with an ODE in the complex phase space $\Ce^n$:
\bgeq
\dot z=P(z),\nonumber
\endeq
where $z=(z_1,\dots,z_n)^{\top}$, then introducing the new variable $u=z/|z^2|$, where $|z|^2=\sum_{j=1}^n|z_j|$, we obtain
\bgeq
\dot u=P^*(u),\nonumber
\endeq
where 
\bgeq
P^*(u)=|u|^2\tilde P(u)-2u\,\re(u\,|\,\tilde P(u)).\nonumber
\endeq
Here we define $(u\,|\,v)=\sum_{j=1}^n u_j\ov v_j$.

\begin{lemma}
Let $P(z)=Rz-\c z g(|z|)$, where $R=\diag(i\g_1,\dots,i\g_n)$, $\g_j\in\eR$, $\c$ is a given complex number and $g:(0,\ty)\to\ty$ is a continuous function. Then
\bgeq
P^{*}(u)=Ru+\ov\c ug(|u|^{-1}).\nonumber
\endeq
\end{lemma}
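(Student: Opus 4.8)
The plan is to adapt the proof of Lemma~\ref{Rx} (the real case) to the complex setting, using the formula
\bgeq
P^*(u)=|u|^2\tilde P(u)-2u\,\re(u\,|\,\tilde P(u)),\nonumber
\endeq
where $\tilde P(u)=P(u/|u|^2)$ and $(u\,|\,v)=\sum_{j=1}^n u_j\ov v_j$. First I would compute $\tilde P(u)$ directly: since $P(z)=Rz-\c z g(|z|)$, substituting $z=u/|u|^2$ and using $|z|=|u|^{-1}$ gives
\bgeq
\tilde P(u)=\frac{1}{|u|^2}\bigl(Ru-\c u g(|u|^{-1})\bigr).\nonumber
\endeq
Then $|u|^2\tilde P(u)=Ru-\c u g(|u|^{-1})$, so the first term of $P^*(u)$ is already in the desired shape except for the sign and conjugation, which must be corrected by the second term $-2u\,\re(u\,|\,\tilde P(u))$.

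The key computational step is evaluating $\re(u\,|\,\tilde P(u))$. Writing $\tilde P(u)=|u|^{-2}(Ru-\c u g(|u|^{-1}))$, linearity of $(u\,|\,\cdot\,)$ in its second slot (conjugate-linear) gives
\bgeq
(u\,|\,\tilde P(u))=|u|^{-2}\Bigl((u\,|\,Ru)-\ov\c\, g(|u|^{-1})(u\,|\,u)\Bigr)=|u|^{-2}\bigl((u\,|\,Ru)-\ov\c\,g(|u|^{-1})|u|^2\bigr).\nonumber
\endeq
Now I would use the hypothesis $R=\diag(i\g_1,\dots,i\g_n)$ with $\g_j\in\eR$: here $(u\,|\,Ru)=\sum_j (i\g_j u_j)\ov u_j=i\sum_j\g_j|u_j|^2$, which is purely imaginary, hence $\re(u\,|\,Ru)=0$. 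This is the complex analogue of the antisymmetry condition $Rx\cdot x=0$ used in Lemma~\ref{Rx}. Therefore
\bgeq
\re(u\,|\,\tilde P(u))=|u|^{-2}\bigl(0-\re(\ov\c)\,g(|u|^{-1})|u|^2\bigr)=-\re(\c)\,g(|u|^{-1}).\nonumber
\endeq
Substituting back, $P^*(u)=Ru-\c u g(|u|^{-1})+2u\,\re(\c)\,g(|u|^{-1})$. Since $2\re(\c)-\c=\c+\ov\c-\c=\ov\c$, this collapses to $P^*(u)=Ru+\ov\c\,u\,g(|u|^{-1})$, as claimed.

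I do not expect a genuine obstacle here; the argument is a routine but slightly delicate bookkeeping exercise. The one place to be careful is tracking the conjugation: the term $-2u\,\re(u\,|\,\tilde P(u))$ converts the coefficient $-\c$ into $-\c+2\re\c=\ov\c$, which is exactly why the conjugate appears in the statement, in contrast to the real case where the coefficient simply changes sign of the $g$-shift. A secondary point worth a remark is that the stated codomain ``$g:(0,\ty)\to\ty$'' is a typo for $g:(0,\ty)\to\eR$ (or $\to\Ce$); the computation only uses that $g$ is a well-defined continuous scalar-valued function, so it goes through verbatim. One should also note, as in the real case, that $\re(u\,|\,Ru)=0$ holds precisely because $R$ is diagonal with purely imaginary entries, which is the complex counterpart of $R$ being real antisymmetric.
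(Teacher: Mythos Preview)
Your proof is correct and follows exactly the approach of the paper, which simply notes that $(Ru\,|\,u)=i\sum_j\g_j|u_j|^2$ is purely imaginary and then refers back to the computation in Lemma~\ref{Rx}. One inconsequential slip: with $(u\,|\,v)=\sum_j u_j\ov v_j$ the second slot is conjugate-linear, so $(u\,|\,Ru)=\sum_j u_j\ov{(i\g_j u_j)}=-i\sum_j\g_j|u_j|^2$ rather than $+i$, but only $\re(u\,|\,Ru)=0$ is needed, so the argument is unaffected.
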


\begin{proof}
Note that $(Ru\,|\,u)=i\sum_j\g_j|u_j|^2$, so that $\re (Ru\,|\,u)=0$. The rest of the proof is the same as in the proof of Lemma~\ref{Rx}.
\end{proof}

We shall also often need the following technical lemma, dealing with planar systems of weak focus type near the origin:
\bgeq\label{pq}
\begin{aligned}
\dot x&=-y+p(x,y)\\
\dot y&=\phantom{-}x+q(x,y).
\end{aligned}
\endeq
A typical situation is when $p$ and $q$ are analytic functions with McLaurin series containing quadratic or higher order terms only.
 It is an extension of Lemma~\ref{Rx} in the case of $n=2$.

\begin{lemma}\label{wf}
The system obtained from (\ref{pq}) by geometric inversion is equal to
\bgeq\label{pquv}
\begin{aligned}
\dot u&=-v+(v^2-u^2)\tilde p-2uv\,\tilde q\\
\dot v&=\phantom{-}u+(u^2-v^2)\tilde q-2uv\,\tilde p,
\end{aligned}
\endeq
where $\tilde p=p(\frac u{u^2+v^2},\frac v{u^2+v^2})$ and $\tilde q=q(\frac u{u^2+v^2},\frac v{u^2+v^2})$.
\end{lemma}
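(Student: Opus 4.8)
The plan is to obtain \eqref{pquv} as the explicit $n=2$ instance of the general inversion formula \eqref{P*}. Write $x=(x,y)$ and $u=(u,v)$, so that the vector field in \eqref{pq} is $P(x,y)=(-y+p(x,y),\,x+q(x,y))$ and the inversion reads $x=\frac{u}{u^2+v^2}$, $y=\frac{v}{u^2+v^2}$. Then
\bgeq
\tilde P(u,v)=P\!\left(\frac{u}{u^2+v^2},\,\frac{v}{u^2+v^2}\right)=\left(-\frac{v}{u^2+v^2}+\tilde p,\ \frac{u}{u^2+v^2}+\tilde q\right),\nonumber
\endeq
with $\tilde p$ and $\tilde q$ exactly as in the statement, and it remains to substitute this into $P^*=|u|^2\tilde P-2u(u\cdot\tilde P)$, see \eqref{P*}.

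First compute $|u|^2\tilde P(u,v)$: the factor $|u|^2=u^2+v^2$ clears the two terms carrying the denominator $u^2+v^2$, leaving $|u|^2\tilde P(u,v)=\big(-v+(u^2+v^2)\tilde p,\ u+(u^2+v^2)\tilde q\big)$. Next compute the scalar product $u\cdot\tilde P(u,v)$: the two contributions $\pm uv/(u^2+v^2)$ cancel, so $u\cdot\tilde P(u,v)=u\tilde p+v\tilde q$. Substituting both into \eqref{P*} and using $u^2+v^2-2u^2=v^2-u^2$ and $u^2+v^2-2v^2=u^2-v^2$ yields
\bgeqnn
P^*(u,v)&=&\big(-v+(u^2+v^2)\tilde p,\ u+(u^2+v^2)\tilde q\big)-2(u,v)\,(u\tilde p+v\tilde q)\\
&=&\big(-v+(v^2-u^2)\tilde p-2uv\,\tilde q,\ \ u+(u^2-v^2)\tilde q-2uv\,\tilde p\big),
\endeqnn
which is exactly the right-hand side of \eqref{pquv}.

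There is no real obstacle here: the lemma is merely an explicit evaluation of \eqref{u}, and the only thing to watch is the bookkeeping of signs together with the cancellation of the terms with denominator $u^2+v^2$. One could instead derive \eqref{pquv} directly by differentiating $u=x/|x|^2$ along solutions of \eqref{pq}, but reusing \eqref{P*} is shorter; it is also worth noting that the linear part $(-y,x)$ is sent to $(-v,u)$, which is the $n=2$, $\c=0$ case already contained in Lemma~\ref{Rx}, so in fact only the perturbation $(p,q)$ contributes the nonlinear terms appearing in \eqref{pquv}.
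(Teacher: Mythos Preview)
Your proof is correct and follows exactly the approach implied by the paper: the paper states this lemma without proof, presenting it as a direct specialization of the general inversion formula \eqref{P*} to $n=2$ (it is introduced as ``an extension of Lemma~\ref{Rx} in the case of $n=2$''). Your computation of $|u|^2\tilde P$ and $u\cdot\tilde P$, together with the cancellation $-uv/(u^2+v^2)+uv/(u^2+v^2)=0$, is precisely the bookkeeping one has to do, and your remark that the linear part $(-y,x)\mapsto(-v,u)$ is already covered by Lemma~\ref{Rx} is in the spirit of the paper's presentation.
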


The corresponding general result concerning the box dimension of spiral trajectories of (\ref{pquv})
can be seen in Theorem~\ref{analytic}.

\subsection{Definition of box dimension of unbounded sets in the plane using the Riemann sphere}
Assume that $A\st\eR^2$ is an unbounded set such that the origin is not its accumulation point, so that $\Phi(A)$ is bounded. 
Let $S$ be the Riemann sphere in $\eR^3$ of radius $1/2$ with the center at $(0,0,1/2)$.
Let $\pi_S:\eR^2\to S$ be the stereographic projection of the plane to the sphere. The following result shows that the box dimension of $A$ (defined via geometric inversion)
is the same as the the box dimension of its stereographic projection. Here the set $\pi_S(A)$ contained in the Riemann sphere, is viewed as a subset of $\eR^3$, i.e.\ its box dimension is
computed via its $\e$-neighbourhood in $\eR^3$.

\begin{prop}\label{riemann}
Let $A$ be an unbounded set in $\eR^2$ such that $0\notin\ov A$. Then
$\ov\dim_BA=\ov\dim_B\pi_S(A)$, analogously for the lower box dimension. Furthermore,
if $A$ is Minkowski nondegenerate, so is $\pi_S(A)$.
\end{prop}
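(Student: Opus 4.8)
The plan is to show that the stereographic projection $\pi_S$, when restricted to a neighborhood of $\ty$ (equivalently, a bounded neighborhood of the origin in the $u$-coordinates after inversion), is bi-Lipschitz onto its image, and then invoke the bi-Lipschitz invariance of box dimension for bounded sets together with Definition~\ref{box}. The key observation is that the stereographic projection $\pi_S$ and the geometric inversion $\Phi$ are intimately related: projecting a point $x\in\eR^2$ to the Riemann sphere $S$ of radius $1/2$ centered at $(0,0,1/2)$ sends the ``point at infinity'' to the north pole $(0,0,1)$, and the composition $\pi_S\circ\Phi$ (which sends $\Phi(A)$, a bounded set near the origin, to $\pi_S(A)$, a set near the north pole) should be the restriction to a neighborhood of the origin of a smooth map with smooth inverse, hence bi-Lipschitz on that neighborhood.

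Concretely, I would first recall the explicit formula for the stereographic projection to the sphere of radius $1/2$: for $x=(x_1,x_2)\in\eR^2$,
\bgeq
\pi_S(x)=\frac1{1+|x|^2}\left(x_1,x_2,|x|^2\right)\in\eR^3,\nonumber
\endeq
which indeed lies on $S$ and tends to $(0,0,1)$ as $|x|\to\ty$. Writing $a=\Phi(x)$, so $|a|=1/|x|$ and $x=a/|a|^2$, one computes
\bgeq
(\pi_S\circ\Phi^{-1})(a)=\pi_S\!\left(\frac a{|a|^2}\right)=\frac1{1+|a|^{-2}}\left(\frac{a_1}{|a|^2},\frac{a_2}{|a|^2},\frac1{|a|^2}\right)=\frac1{|a|^2+1}\left(a_1,a_2,1\right),\nonumber
\endeq
which is a real-analytic map of $a$ on all of $\eR^2$ (the denominator $|a|^2+1$ never vanishes), with image an analytic submanifold of $\eR^3$, and whose Jacobian I would check has full rank $2$ everywhere (so it is a smooth embedding). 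Restricting to the bounded set $\Phi(A)$ — which, since $0\notin\ov A$, is contained in some ball $B_M(0)$ — this map and a local inverse are $C^1$ on the compact set $\ov{B_M(0)}$, hence have bounded derivatives there, so $\pi_S\circ\Phi^{-1}$ is bi-Lipschitz from $\Phi(A)$ onto $\pi_S(A)$.

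Given the bi-Lipschitz equivalence of the bounded sets $\Phi(A)$ and $\pi_S(A)$ in $\eR^2$ and $\eR^3$ respectively, the bi-Lipschitz invariance of upper and lower box dimension for bounded sets (see \cite{falc}) yields $\ov\dim_B\pi_S(A)=\ov\dim_B\Phi(A)=\ov\dim_BA$ and likewise for the lower dimension, with the last equality being Definition~\ref{box}. The same bi-Lipschitz invariance also preserves Minkowski nondegeneracy: a bi-Lipschitz image of a bounded set whose $d$-dimensional Minkowski content is bounded between two positive constants again has this property (the $\e$-neighbourhoods are comparable, up to rescaling $\e$ by the Lipschitz constants), so $A$ Minkowski nondegenerate forces $\Phi(A)$ nondegenerate, hence $\pi_S(A)$ nondegenerate. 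The main technical point — and the only place a genuine computation is needed — is verifying that the Jacobian of $a\mapsto(|a|^2+1)^{-1}(a_1,a_2,1)$ has rank $2$ and that its restriction to $\ov{B_M(0)}$ is bi-Lipschitz onto its image; this is routine since the map is a smooth parametrization of (part of) the sphere $S$, but one should note that distances on $S$ measured in $\eR^3$ are comparable to the parameter distances precisely because we stay away from the north pole being an accumulation point only in the limit — and indeed $\Phi(A)$ being bounded away from nothing problematic (it is simply bounded) is exactly what is needed.
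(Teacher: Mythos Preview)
Your approach is correct and is essentially the same as the paper's: both arguments identify the composite map $F=\pi_S\circ\Phi$ as the key object and prove that it is bi-Lipschitz from a bounded neighbourhood of the origin onto a neighbourhood of the north pole, then invoke bi-Lipschitz invariance together with Definition~\ref{box}. Your explicit formula $F(a)=(|a|^2+1)^{-1}(a_1,a_2,1)$ is exactly the map the paper calls $F$ in Lemma~\ref{riemann_lip}.

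The difference is in how the bi-Lipschitz property of $F$ is verified. The paper composes $F$ with the orthogonal projection $P:S\to\eR^2$, passes to polar coordinates, and reduces to the one-variable computation $g(r)=r/(r^2+1)$ with $g'(r)\in(g'(r_0),1)$ for $r<r_0<1$; this is very explicit but forces the restriction to $B_{r_0}(0)$ with $r_0<1$ (since $g'(1)=0$), so strictly speaking one must also handle the bounded part of $A$ lying inside the unit disk by a separate (easy) argument. You instead observe that $F$ is a smooth injective immersion on all of $\eR^2$ and appeal to the general fact that such a map is bi-Lipschitz on any compact set; this avoids the $r_0<1$ restriction and covers all of $\Phi(A)$ at once, at the cost of leaving the rank-$2$ Jacobian check and the compactness/injectivity argument as ``routine'' rather than carrying them out. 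Both routes are sound; yours is slightly more general in scope, the paper's is more hands-on. One small caution: your closing sentence about the north pole is garbled --- the point is simply that $\Phi(A)$ is bounded (because $0\notin\ov A$), so $F$ restricted to $\ov{\Phi(A)}$ is a smooth embedding of a compact set, hence bi-Lipschitz onto its image. You should also state injectivity of $F$ explicitly (it follows from injectivity of $\pi_S$ and $\Phi$ separately), since an immersion with bounded derivative need not be bi-Lipschitz without it.
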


\begin{remark}
If $A'$ is a subset of the Riemann sphere $S$, then it is possible to define its $\e$-neighbourhood on the manifold $S$ 
with respect to its metric and the resulting surface measure. This permits us to define the new (upper or lower) box dimension
in the usual way. It can be shown that the box dimension of $A'$ with respect to the manifold $S$ 
is the same as the box dimension of $A'$ with respect to $\eR^3$, i.e.\ with respect to its $\e$-neighbourhood in $\eR^3$.
\end{remark}

\begin{figure}[h]
\begin{center}
\includegraphics[trim=0cm 6cm 0cm 10cm,clip=true,width=12cm]{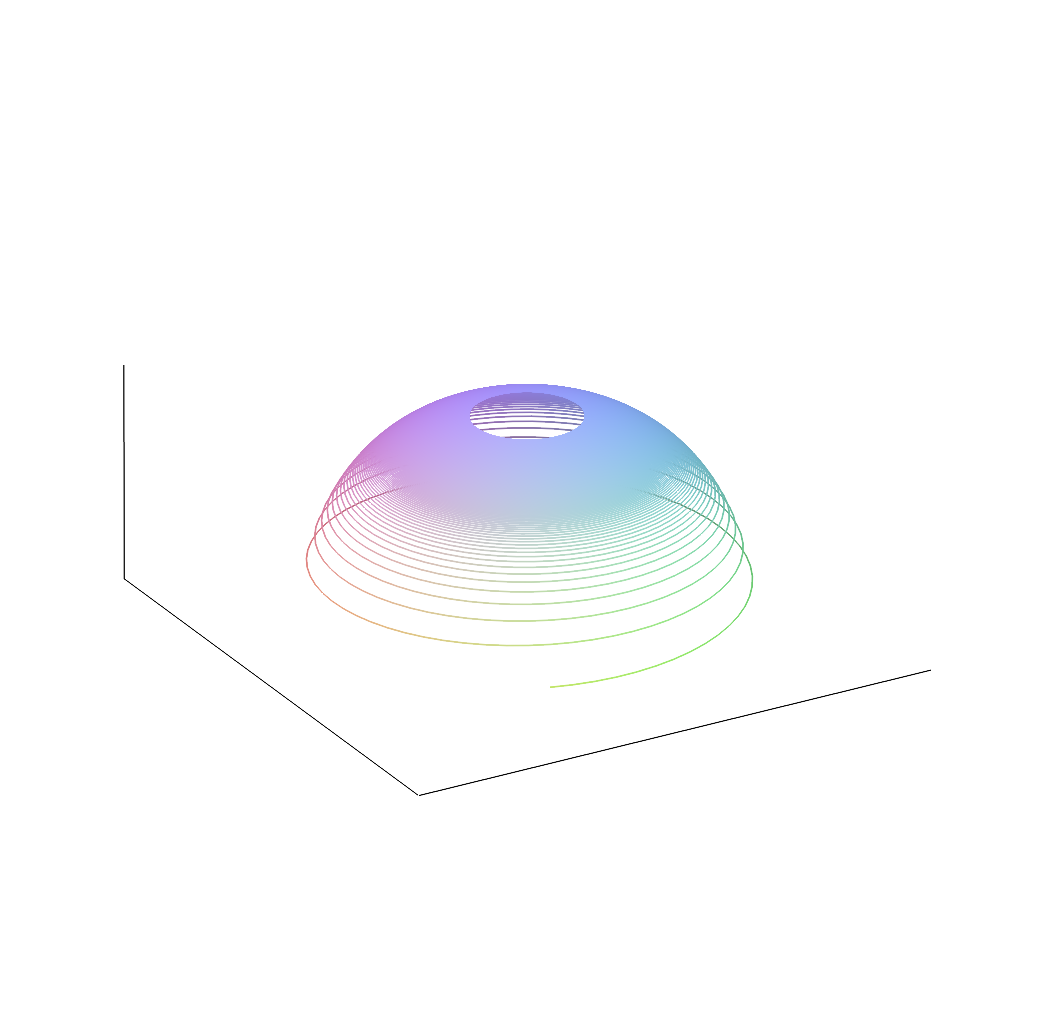}
\end{center}
\caption{The unbounded spiral $f(\varphi)=\varphi^{1/4}$ projected to the Riemann sphere of radius $1/2$.}
\label{rspiral}
\end{figure}

The proof of Proposition~\ref{riemann} rests on the following lemma.

\begin{lemma}\label{riemann_lip}
Let $B_{r_0}(0)$ be the disk of radius $r_0\in(0,1)$ in the plane, and $F:B_{r_0}(0)\to S$ defined as composition of inversion and stereographic projection, i.e.\ 
$F(x)=\pi_S(\frac{x}{|x|^2})$ for $x\ne0$, and $F(0)=N$ (the north pole of $S$). Then the mapping $F$ is bi-Lipschitzian.
\end{lemma}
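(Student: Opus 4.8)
The plan is to show that $F$ is bi-Lipschitzian by writing it as a composition of two maps, each of which is bi-Lipschitzian on the relevant domain. Recall that $F = \pi_S \circ \Phi$ restricted to $B_{r_0}(0)$, with $\Phi$ the geometric inversion. The key geometric observation is that inversion $\Phi$ sends the disk $B_{r_0}(0)\setminus\{0\}$ to the exterior region $\{x : |x| > 1/r_0\}$, a neighbourhood of $\infty$ that is away from the origin, and in fact the stereographic projection $\pi_S$ (from the sphere of radius $1/2$ centered at $(0,0,1/2)$, with $N=(0,0,1)$ the north pole) is itself, up to a reflection, essentially another inversion composed with an isometric embedding. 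So the composition $F$ should behave like the map $x \mapsto x/|x|^2$ followed by another inversion, which by Lemma~\ref{id} and the computations already done in the proof of the Proposition after Definition~\ref{box} is bi-Lipschitz on sets away from the origin.

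More concretely, first I would recall the explicit formula for $\pi_S$: the stereographic projection onto $S$ from the north pole sends $y \in \eR^2$ to the point $\bigl(\frac{y}{1+|y|^2}, \frac{|y|^2}{1+|y|^2}\bigr) \in S \subset \eR^3$, and $\pi_S(\infty) = N$. A direct computation (or the classical conformality of stereographic projection together with an estimate on the conformal factor) gives the chordal-distance formula
\bgeq
|\pi_S(y_1) - \pi_S(y_2)|_{\eR^3} = \frac{|y_1 - y_2|}{\sqrt{1+|y_1|^2}\,\sqrt{1+|y_2|^2}}.\nonumber
\endeq
Then, substituting $y_i = \Phi(x_i) = x_i/|x_i|^2$ and using Lemma~\ref{id}, i.e.\ $|\Phi(x_1)-\Phi(x_2)| = |x_1-x_2|/(|x_1|\,|x_2|)$, together with $1 + |\Phi(x_i)|^2 = 1 + |x_i|^{-2} = (|x_i|^2+1)/|x_i|^2$, one finds after cancellation
\bgeq
|F(x_1) - F(x_2)|_{\eR^3} = \frac{|x_1 - x_2|}{\sqrt{1+|x_1|^2}\,\sqrt{1+|x_2|^2}}\nonumber
\endeq
for all $x_1, x_2 \in B_{r_0}(0)\setminus\{0\}$, and by continuity this extends to include $x_i = 0$ as well (with $F(0)=N$). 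Since $x_1, x_2$ range over the bounded disk $B_{r_0}(0)$, the denominator $\sqrt{1+|x_1|^2}\sqrt{1+|x_2|^2}$ is bounded above by $1+r_0^2$ and below by $1$, so $\frac{1}{1+r_0^2}|x_1-x_2| \le |F(x_1)-F(x_2)| \le |x_1-x_2|$, which is exactly the bi-Lipschitz estimate.

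The main obstacle — really the only place where care is needed — is establishing the chordal-distance identity for $\pi_S$ with the specific normalization used here (sphere of radius $1/2$, center $(0,0,1/2)$), and checking that the factors $|x_i|^2$ coming from Lemma~\ref{id} cancel exactly against the factors in $1+|\Phi(x_i)|^2$. This is a routine but slightly delicate algebraic computation; it is the analogue of, and in fact simpler than, the bi-Lipschitz computation already carried out in the proof of the Proposition following Definition~\ref{box}. Once the identity above is in hand, the bi-Lipschitz bounds are immediate from the boundedness of $B_{r_0}(0)$, and the inclusion of the point $0$ (where $F$ is defined by the value $N$) causes no trouble since both sides of the identity are continuous there. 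I would therefore present the proof as: (1) state the chordal formula for $\pi_S$; (2) substitute via Lemma~\ref{id} and simplify to get the closed-form expression for $|F(x_1)-F(x_2)|$; (3) read off the two-sided bound using $|x_i| < r_0 < 1$.
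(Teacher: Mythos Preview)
Your proof is correct and takes a genuinely different route from the paper's. The paper argues indirectly: it composes $F$ with the orthogonal projection $P:S\to\eR^2$ to obtain a planar map $G=P\circ F$, observes that in polar coordinates $G(r,\f)=(g(r),\f)$ with $g(r)=r/(r^2+1)$, and checks that $g'(r)=(1-r^2)/(r^2+1)^2$ is bounded between $g'(r_0)>0$ and $1$ on $[0,r_0]$; since $P|_V$ is bi-Lipschitz on the spherical cap strictly above the equator (this is where $r_0<1$ enters), $F=P^{-1}\circ G$ is bi-Lipschitz as a composition of bi-Lipschitz maps. Your argument instead produces the closed-form chordal identity
$$
|F(x_1)-F(x_2)|=\frac{|x_1-x_2|}{\sqrt{1+|x_1|^2}\sqrt{1+|x_2|^2}}
$$
directly from the classical stereographic distance formula and Lemma~\ref{id}, and then reads off the bounds. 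This is cleaner, avoids the auxiliary projection $P$, and in fact shows more: your estimate works for \emph{any} finite $r_0$, not just $r_0<1$. The paper's subsequent remark that ``the bound $r_0<1$ is optimal, since $g'(1)=0$'' is therefore a statement about their method (the projection $P$ degenerates at the equator), not about $F$ itself; your approach makes this distinction transparent.
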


\begin{proof}
Let $P:S\to\eR^2$ be the orthogonal projection. It is easy to see that $V=F(B_{r_0}(0))$ is a neighbourhood of the north pole $N=(0,0,1)$ on the Riemann sphere, and strictly above the equator, due to $r_0<1$.
It is then clear that the restriction of the projection $P|_V$ is a bi-Lipschitz mapping. Therefore, it suffices to show that $G=P\circ F$ is bi-Lipschitzian, since then $F=P^{-1}\circ G$ will be bi-Lipschitzian as a composition of bi-Lipschitz functions.

We pass to polar coordinates $(r,\f)$ in the plane. It is clear that $G$ has the form $G(r,\f)=(g(r),\f)$. Fixing any vertical semi-plane defined by given $\f$, 
it intersects the Riemann sphere in a semi-circle above the $r$-axis. For a given $r>0$, the strait line joining $r^{-1}$ with $N$ interesects the circle in the point
the horizontal component of which is equal to $g(r)$. An easy calculation shows that
\bgeq\label{g}
g(r)=\frac{r}{r^2+1}.
\endeq
Since $g'(r)=\frac{1-r^2}{(r^2+1)^2}$, we have that $g'(0)=1$ and $g'(r_0)>0$, hence $g'(r)\in(g'(r_0),1)$. This shows that $G:V\to G(V)$ is bi-Lipschitzian.
\end{proof}

\begin{remark}
It is easy to see that the bound $r_0<1$ in Lemma~\ref{riemann_lip} is optimal, since $g'(1)=0$.
\end{remark}

\begin{example} Let $r=\f^{-\a}$ be a given spiral $A$ in the plane, $\f\ge\f_0>0$. Then $G(A)$, with $G$ from the proof of Lemma~\ref{riemann_lip}, is a spiral defined by $\rho=\frac{\f^{-\a}}{\f^{-2\a}+1}$, and
both of them are bi-Lipschitz equivalent. In particular, they both have the box dimension equal to $d=\max\{1,\frac2{1+\a}\}$. The following proposition shows that they both have
the same $d$-dimensional Minkowski content (its value has been computed in \cite{zuzu}, see (\ref{mmink}) below).
\end{example}

\begin{prop}\label{mink_same} Assume that $\a\in(0,1)$ is a given constant.
Let $r=f(\f)$ be a spiral such that $f(\f)\sim C\f^{-\a}$, $f'(\f)\sim -\a C\f^{-\a-1}$ as $\f\to\ty$, and there exists $M>0$ such that $|f''(\f)|\le M\f^{-\a}$,
for all $\f\ge\f_0>0$. Define the new spiral $\rho=\frac{f(\f)}{f(\f)^2+1}$. Then both spirals have the same box dimension $d=2/(1+\a)$, and the same $d$-dimensional 
Minkowski contents.
\end{prop}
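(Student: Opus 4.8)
The plan is to reduce the claim to two already-established facts: the box-dimension and Minkowski-content formula for the model spiral $r=\f^{-\a}$ (computed in \cite{zuzu}), and the bi-Lipschitz invariance of box dimension for sets away from the origin (Theorem~\ref{bilip0}). First I would observe that since $f(\f)\sim C\f^{-\a}$ with $\a\in(0,1)$, the original spiral $r=f(\f)$ is comparable to $r=\f^{-\a}$, so by the results of \cite{zuzu} it has box dimension $d=2/(1+\a)$; the real content of the proposition is that the distortion introduced by the map $\rho=f/(f^2+1)$ changes neither the box dimension nor the $d$-dimensional Minkowski content. I would note that $\rho=g(f(\f))$ with $g(r)=r/(r^2+1)$ exactly the function from Lemma~\ref{riemann_lip}; since $f(\f)\to0$ as $\f\to\ty$, for $\f$ large enough $f(\f)$ lies in a disk $B_{r_0}(0)$ with $r_0<1$, where $g$ is a diffeomorphism onto its image with $g'(0)=1$. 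So on the relevant range the map $\Xi:\,r\mapsto g(r)$ (acting in polar coordinates by $(r,\f)\mapsto(g(r),\f)$) is bi-Lipschitz, and the box-dimension equality $d=2/(1+\a)$ for the new spiral follows immediately from Theorem~\ref{bilip0} (after discarding a bounded portion of each spiral, which does not affect box dimension).

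For the equality of Minkowski contents, bi-Lipschitz invariance is not enough — it only gives comparability of the contents, not equality — so here I would argue directly by comparing $\e$-neighbourhoods. The idea is that near infinity the map $\Xi$ is asymptotically an isometry: since $g(r)=r/(r^2+1)=r(1+O(r^2))$ and $g'(r)=1+O(r^2)$ as $r\to0$, and along the spiral $r=f(\f)\sim C\f^{-\a}\to0$, the Jacobian of the planar map $(r,\f)\mapsto(g(r),\f)$ tends to the identity uniformly as $\f\to\ty$. Concretely I would parametrise both spirals by $\f$, show that the tangent/normal frame and the arc-length element of the new spiral differ from those of the old one by a factor $1+o(1)$ as $\f\to\ty$, and then use the nucleus/tail decomposition of the spiral's $\e$-neighbourhood: the $\e$-neighbourhood of a spiral of this type splits (for small $\e$) into a "tail" part consisting of disjoint annular strips of width $2\e$ around successive coils plus a "nucleus" part; on the tail the area is, up to $(1+o(1))$, $2\e$ times the arc length of the coils outside the nucleus, and the asymptotic isometry makes those arc lengths asymptotically equal for the two spirals. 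The hypothesis $|f''(\f)|\le M\f^{-\a}$ is exactly what is needed to control curvature so that this tail decomposition and the $2\e\cdot(\text{length})$ approximation are uniform; I would cite the computation in \cite{zuzu} giving
\bgeq\label{mmink}
\M^d(r=\f^{-\a})=\text{(the explicit constant from \cite{zuzu})},
\endeq
and then conclude $\M^d(\rho=\tfrac{f}{f^2+1})=\M^d(r=f(\f))$ by letting $\e\to0$ in the ratio of the two $\e$-neighbourhood areas, which tends to $1$.

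The main obstacle I anticipate is making the last step rigorous rather than heuristic: one must show that the error terms in the $\e$-neighbourhood area estimate are genuinely $o(\e^{2-d})$ (equivalently, that the relative error tends to $0$) uniformly, and that the nucleus contributes the same leading-order term for both spirals. This requires either redoing the neighbourhood computation of \cite{zuzu} with the perturbed spiral in place of the model one — carefully tracking that the hypotheses $f\sim C\f^{-\a}$, $f'\sim-\a C\f^{-\a-1}$, $|f''|\le M\f^{-\a}$ are stable under the substitution $f\mapsto g\circ f$ — or invoking a general "two spirals with the same leading asymptotics have the same Minkowski content" lemma. I would prefer the latter: isolate a lemma stating that if two spirals $r=f_1(\f)$, $r=f_2(\f)$ satisfy $f_i(\f)\sim C\f^{-\a}$, $f_i'(\f)\sim-\a C\f^{-\a-1}$, $|f_i''(\f)|\le M\f^{-\a}$, and moreover $f_1(\f)/f_2(\f)\to1$ with $f_1'(\f)/f_2'(\f)\to1$, then $\M^d(r=f_1)=\M^d(r=f_2)$; verifying that $f_2=g\circ f_1$ meets these hypotheses is then the short computation $g(r)/r\to1$, $g'(r)\to1$, $g''$ bounded near $0$. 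This lemma is essentially contained in the methods of \cite{zuzu}, so the proof reduces to citing it and checking the asymptotics of $g$, which is routine.
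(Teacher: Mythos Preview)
Your proposal is correct in outline but considerably more elaborate than what is needed, and it misses the shortcut the paper actually takes. The paper's proof is a one-liner: it invokes Theorem~\ref{mink} (a restatement of \cite[Theorem~6]{zuzu}), which gives an \emph{explicit formula} for the $d$-dimensional Minkowski content of any spiral $r=f(\f)$ satisfying the stated hypotheses, namely $\M^d(\C)=m^d\pi(\pi\a)^{-2\a/(1+\a)}\frac{1+\a}{1-\a}$ with $m=\lim_{\f\to\ty} f'(\f)/(\f^{-\a})'$. Since this value depends only on $m$, the whole proof reduces to checking that $h(\f)=f(\f)/(f(\f)^2+1)$ satisfies the same hypotheses with the \emph{same} $m$; that is a routine computation using $g(r)=r/(r^2+1)$, $g'(0)=1$, and the given bounds on $f,f',f''$. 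No bi-Lipschitz argument, no $\e$-neighbourhood comparison, and no new lemma are required.

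By contrast, you split into a bi-Lipschitz step for the dimension and then a separate asymptotic-isometry/$\e$-neighbourhood argument for the content, ultimately proposing to isolate a ``same leading asymptotics $\Rightarrow$ same Minkowski content'' lemma. That lemma is precisely Theorem~\ref{mink} itself, so you are re-deriving what is already available. Two small technical remarks: your appeal to Theorem~\ref{bilip0} is misplaced, since that result is stated for sets bounded away from the origin, whereas both spirals here accumulate at~$0$; the ordinary bi-Lipschitz invariance for bounded sets (e.g.\ from \cite{falc}) is what applies, via the smooth planar map $(x,y)\mapsto (x,y)/(x^2+y^2+1)$. And your nucleus/tail heuristic, while sound, would require reproducing much of the computation in \cite{zuzu}; the paper avoids this entirely by quoting the finished formula.
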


The proof of this proposition is a direct consequence of \newline \cite[Theorem 6]{zuzu} that we state here in a simplified, but equivalent form (stated also in \cite[Theorem 3]{clothoid}).

\begin{theorem}\label{mink}
{\rm (Minkowski measurable spirals)} Assume that $f:[\f_1,\ty)\to(0,\ty)$ is a decreasing $C^2$-function, and $\f_1>0$. Assume that there exists the limit
\bgeq
m=\lim_{\f\to\ty}\frac{f'(\f)}{(\f^{-\a})'},\nonumber
\endeq
and $m>0$. Let there be a positive constant $M$ such that $|f''(\f)|\le M\f^{-\a}$ for all $\f\ge\f_1$. Let $\C$ be the graph of the spiral $\rho=f(\f)$ with $\a\in(0,1)$ and $d=2/(1+\a)$.
Then $\dim_B\C=d$, the spiral is Minkowski measurable, and moreover,
\bgeq\label{mmink}
\M^d(\C)=m^d\pi(\pi\a)^{-2\a/(1+\a)}\frac{1+\a}{1-\a}.
\endeq
\end{theorem}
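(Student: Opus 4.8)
The plan is to determine the asymptotics of $|A_\e|$, the area of the $\e$-neighbourhood $A_\e$ of the spiral $\C$, as $\e\to0$, to show that $|A_\e|/\e^{2-d}$ converges to the right-hand side of (\ref{mmink}), and then to read off Minkowski measurability and $\dim_B\C=d$ from the definition. I will use the hypotheses as follows: integrating $f'(\f)\sim-\a m\f^{-\a-1}$, together with $f(\f)\to0$ as $\f\to\ty$ (implicit in the spiral winding into the origin, and in any case forced by the stated value of $d$; see the final paragraph), gives $f(\f)\sim m\f^{-\a}$.

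First I would split $A_\e$ into a \emph{nucleus} (a filled disk around the origin) and a \emph{tail} (a thin embedded tube), separated by a critical coil. Let $D(\f)=f(\f)-f(\f+2\pi)$ be the radial gap between consecutive coils near the polar angle $\f$; integrating the asymptotics of $f'$ gives $D(\f)\sim 2\pi\a m\f^{-\a-1}$, which decreases in $\f$. Define the critical angle $\f_\e$ and radius $\rho_\e$ by $D(\f_\e)=2\e$, so that
\bgeq
\f_\e\sim\left(\frac{\pi\a m}{\e}\right)^{1/(1+\a)},\qquad \rho_\e:=f(\f_\e)\sim m^{1/(1+\a)}(\pi\a)^{-\a/(1+\a)}\e^{\a/(1+\a)}.\nonumber
\endeq
For $\f>\f_\e$ consecutive coils lie at radial distance $<2\e$, so their $\e$-tubes (of radial width $2\e$) overlap and the corresponding part of $A_\e$ fills the disk $B(0,\rho_\e)$ up to a boundary layer of area $O(\e\rho_\e)$. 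For $\f<\f_\e$ consecutive coils lie at radial distance $>2\e$, so the corresponding part of $A_\e$ is an embedded tube of width $2\e$ about $\C$; here the bound $|f''(\f)|\le M\f^{-\a}$ enters, since it forces the curvature of $\C$ to be $O(1/\rho)$, which makes the tube genuinely embedded and of area $2\e L_\e+O(\e^2 T_\e)$, where $L_\e$ is the length of the portion $\f\in[\f_1,\f_\e]$ and $T_\e=\int_{\text{tail}}\kappa\,ds=O(\f_\e)$. Since $\a\in(0,1)$ we have $2-d=2\a/(1+\a)<1$, whence each of $O(\e\rho_\e)$, $O(\e^2 T_\e)=O(\e^2\f_\e)$ and the endpoint contribution near $\f_1$ is $o(\e^{2-d})$.

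Combining the two parts, $|A_\e|=\pi\rho_\e^2+2\e L_\e+o(\e^{2-d})$, where $L_\e=\int_{\f_1}^{\f_\e}\sqrt{f^2+(f')^2}\,d\f\sim\int_{\f_1}^{\f_\e}m\f^{-\a}\,d\f\sim\frac{m}{1-\a}\f_\e^{1-\a}$ (the middle $\sim$ uses $f'/f\sim-\a/\f\to0$, the last uses $\a<1$). Inserting the asymptotics for $\rho_\e$ and $\f_\e$ and collapsing the powers of $m,\pi,\a,\e$ (using $2\a/(1+\a)=2-d$ and $(1-\a)/(1+\a)=1-2\a/(1+\a)$) gives $\pi\rho_\e^2\sim m^d\pi(\pi\a)^{-2\a/(1+\a)}\e^{2-d}$ and $2\e L_\e\sim m^d\pi(\pi\a)^{-2\a/(1+\a)}\frac{2\a}{1-\a}\e^{2-d}$, so that
\bgeq
\lim_{\e\to0}\frac{|A_\e|}{\e^{2-d}}=m^d\pi(\pi\a)^{-2\a/(1+\a)}\left(1+\frac{2\a}{1-\a}\right)=m^d\pi(\pi\a)^{-2\a/(1+\a)}\frac{1+\a}{1-\a},\nonumber
\endeq
which is (\ref{mmink}). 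As this limit exists and lies in $(0,\ty)$, the spiral $\C$ is Minkowski measurable and $\dim_B\C=d$.

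The main obstacle is turning the nucleus/tail heuristic into rigorous two-sided bounds for $|A_\e|$ whose quotients by $\e^{2-d}$ share the common limit above. The two delicate points are: (i) that at radius $\approx\rho_\e$ the transition between the filled region and the separated tube occurs with neither a gap nor a doubly counted overlap --- this needs the coils to be close enough to concentric circles on scale $\e$, which is exactly what the bound $|f''|\le M\f^{-\a}$ delivers; and (ii) that the critical coil is determined up to $O(1)$ coils, i.e.\ $\rho_\e$ up to an additive $O(\e)=o(\rho_\e)$ and $\f_\e$ up to $O(1)$, so that $\pi\rho_\e^2$ and $\f_\e^{1-\a}$ are pinned down modulo $o(\e^{2-d})$. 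Finally, note that the bare statement $\dim_B\C=d$ needs only $f(\f)\simeq\f^{-\a}$ (two-sided bounds suffice to sandwich $\f_\e$ and $\rho_\e$ up to constants), whereas the exact value of the Minkowski content in (\ref{mmink}) genuinely requires the sharp asymptotics $f'(\f)\sim-\a m\f^{-\a-1}$; and the hypothesis $f(\f)\to0$ is indispensable, since a nonzero limit would replace the filled disk by a filled annulus about a limit circle and change the exponent from $2/(1+\a)$ to $(\a+2)/(1+\a)$.
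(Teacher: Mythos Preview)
The paper does not actually prove this theorem: it is quoted verbatim as a known result from \cite[Theorem~6]{zuzu} (restated in \cite[Theorem~3]{clothoid}) and used as a black box to deduce Proposition~\ref{mink_same}. So there is no ``paper's own proof'' to compare against.

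That said, your sketch is exactly the argument that underlies the cited reference. The nucleus/tail decomposition goes back to Tricot \cite{tricot} and is carried out in \cite{zuzu} in precisely the form you describe: one identifies the critical angle $\f_\e$ at which the radial gap $f(\f)-f(\f+2\pi)$ equals $2\e$, treats the region $\f>\f_\e$ as a filled disk of radius $\rho_\e=f(\f_\e)$, and the region $\f_1\le\f\le\f_\e$ as an embedded tube of width $2\e$ whose area is $2\e L_\e$ up to curvature corrections. Your asymptotics for $\f_\e$, $\rho_\e$, $L_\e$ and the resulting value of $\M^d(\C)$ are all correct, and your identification of the two delicate points (the transition layer at $\rho\approx\rho_\e$ and the $O(1)$ ambiguity in the critical coil) is precisely where the second-derivative bound $|f''|\le M\f^{-\a}$ is used in \cite{zuzu}. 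Your closing remark about the necessity of $f(\f)\to0$ is also well taken: this is implicit in the statement (the spiral is of focus type), and without it the exponent would indeed change to that of a limit-cycle spiral.

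In short: the paper gives no proof, and your outline matches the proof in the original source.
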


Now the proof of proposition \ref{mink_same} follows by showing that the functon $h(\f)=\frac{f(\f)}{f(\f)^2+1}$ has the same properties as $f(\f)$ in Theorem~\ref{mink},
eventually with a different value of $M>0$.

\begin{remark}
The analogous construction as in Proposition~\ref{riemann} and in Lemma~\ref{riemann_lip} can be performed starting with $\eR^n$ instead of $\eR^2$, and using the Riemann sphere $S^n$ in $\eR^{n+1}$ of radius $1/2$, centered at $(0,\dots,0,1/2)$.
It suffices to use the stereographic projection $\pi_{S^n}:\eR^n\to S^n$.
\end{remark}

\begin{example}
Let us suppose that the Riemann sphere is of radius $R$. Proposition \ref{riemann} still holds which is easy to see using analogous arguments as in Lemma \ref{riemann_lip}. In this case we take $r_0\in(0,1/(2R))$ and get $$g(r)=\frac{4R^2r}{4R^2r^2+1}.$$ On the other hand, the Minkowski content of the new spiral $\rho$ will be afected with the radius of the Riemann sphere. Concretely: $$\mathcal{M}^d(\Gamma_\rho)=(4R^2)^d\mathcal{M}^d(\Gamma_r).$$
\end{example}

Let $S^2$ be the Poincar\' e sphere in $\eR^3$ of radius $R$, i.e. $S^2=\{(X,Y,Z)\in\eR^3\ \colon \ X^2+Y^2+Z^2=R^2\}$. We shall project onto the sphere from the $(x,y)$ plane 
placed tangentially at the north pole. We are interested how the box dimension of a focus type spiral is affected with geometric inversion and projection onto the Poincar\'e sphere.

\begin{prop}\label{p_sfera}
Let $\Gamma_1\ldots r=f(\varphi)$ be a spiral of focus type such that 
$$
f(\varphi)\simeq\varphi^{-\a},\ \ |f'(\varphi)|\simeq\varphi^{-\a-1},\ \ |f''(\varphi)|\leq M\varphi^{-\a}
$$
as $\f\to\ty$, for some positive constants $\a$ and $M$. Firstly we geometrically invert this spiral, and then project it on $S^2$. We shall denote this new spiral in $\eR^3$ with $\Gamma_2$.
Then
\bgeq\label{n_dim}
\dim_B\Gamma_2=\frac{2+\a}{1+\a}.
\endeq
\end{prop}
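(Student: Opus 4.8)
The plan is to make the geometry of $\Gamma_2$ completely explicit, recognise it as a curve spiralling onto the equator circle of $S^2$, and then compute $\dim_B\Gamma_2$ by a box count that reflects a ``product'' structure: (equator circle) $\times$ ($\a$-string). First I would pass to cylindrical coordinates $(\varrho,\f,Z)$ about the $Z$-axis. Geometric inversion turns $r=f(\f)$ into the unbounded spiral of polar radius $1/f(\f)$ at the same polar angle $\f$, and central projection onto the upper hemisphere of $S^2$ then gives
\bgeq
\varrho(\f)=\frac{R}{\sqrt{1+R^2f(\f)^2}},\qquad Z(\f)=\frac{R^2f(\f)}{\sqrt{1+R^2f(\f)^2}}.\nonumber
\endeq
From the hypotheses one reads off, as $\f\to\ty$, that $R-\varrho(\f)\simeq\f^{-2\a}$ and $Z(\f)\simeq\f^{-\a}$; moreover the hypotheses (with $f\in C^2$, $f>0$, $f\to0$, and $|f'(\f)|\simeq\f^{-\a-1}$ keeping $\f^{\a+1}f'(\f)$ continuous and bounded away from $0$) force $f$ to be eventually decreasing, so $\varrho\nearrow R$ and $Z\searrow0$. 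Thus $\Gamma_2$ winds once around the $Z$-axis whenever $\f$ increases by $2\pi$, while converging to the equator circle $E=S^2\cap\{Z=0\}$ with $d(\Gamma_2(\f),E)\simeq\f^{-\a}$. Since $\Gamma_2\st S^2$ is bounded and bounded away from the origin, its classical box dimension is what is sought; by finite stability any compact initial arc of $\Gamma_2$ (a rectifiable curve, box dimension $1$) may be discarded.

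Next I would straighten a thin tube about $E$. For small $\d>0$ the $\d$-tube $T$ about the smooth compact curve $E\st\eR^3$ is bi-Lipschitz to $S^1_L\times D_\d$ ($L=2\pi R$) via ``nearest point of $E$, normal coordinates $(\xi,\eta)$'', and under it the part of $\Gamma_2$ with $\f$ large corresponds to $\tilde\Gamma(\f)=(R\f\bmod L,\ \varrho(\f)-R,\ Z(\f))$. The key simplification is that the fibre component $(\varrho(\f)-R,Z(\f))$ always lies on a single fixed smooth arc in $D_\d$ through the origin which is tangent to the $\eta$-axis (because $d\varrho/df=0$ at $f=0$); hence fibrewise the projection $(\xi,\eta)\mapsto\eta$ is bi-Lipschitz on that arc, and since box dimension is a bi-Lipschitz invariant it suffices to box-count the model spiral $\hat\Gamma=\{(R\f\bmod L,\ Z(\f)):\f\ge\f_1\}\st S^1_L\times[0,\d]$.

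For the upper bound: cover $S^1_L$ by $\simeq1/\e$ arcs of length $\e$; over the arc near a given $\theta_0$, the $\eta$-values of $\hat\Gamma$ lie within $O(\e)$ of $\{Z(\theta_0/R+2\pi k):k\ge k_0\}$, which is comparable to $\{k^{-\a}:k\in\eN\}$ uniformly in $\theta_0$ and is therefore covered by $\simeq\e^{-1/(1+\a)}$ intervals of length $\e$ (Example~\ref{a-string} and the Minkowski nondegeneracy of $\a$-strings). Hence $\hat\Gamma$ is covered by $\simeq\e^{-1}\cdot\e^{-1/(1+\a)}=\e^{-(2+\a)/(1+\a)}$ cubes of side $\e$, so $\ov\dim_B\Gamma_2\le(2+\a)/(1+\a)$. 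For the matching lower bound I would show that the piece of $\hat\Gamma$ with $\f\in[\f_1,c\,\e^{-1/(1+\a)}]$, $c$ small, is an arc of length $\simeq\e^{-1/(1+\a)}$ that is ``$\e$-almost embedded'': since $Z\searrow0$, two of its points with the same value of $R\f\bmod L$ but on consecutive coils differ in $Z$ by $\gtrsim\f^{-\a-1}\gtrsim\e$ on that range, so a ball of radius $\e$ about any point of this arc meets it only along a subarc of bounded parameter length; such an arc meets $\gtrsim\e^{-1/(1+\a)}/\e=\e^{-(2+\a)/(1+\a)}$ cubes of side $\e$. Together these give $\dim_B\Gamma_2=(2+\a)/(1+\a)$, which is (\ref{n_dim}). (The same head/tail dichotomy — the ``head'' of $\e$-separated coils contributing an $\e$-neighbourhood of volume $\simeq\e^2\cdot(\text{length})$, the ``tail'' of merged coils sitting in a thin sub-torus about $E$ — run at the level of Minkowski content yields $|(\Gamma_2)_\e|\simeq\e^{(2\a+1)/(1+\a)}$ in $\eR^3$, so that $3-\frac{2\a+1}{1+\a}=\frac{2+\a}{1+\a}$ and $\Gamma_2$ is in fact Minkowski nondegenerate.)

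The main obstacle is turning this heuristic product picture into the rigorous two-sided box count of $\hat\Gamma$: one must control, uniformly over the $\simeq1/\e$ arcs of $S^1_L$, how many $\e$-intervals capture the $\eta$-trace of $\hat\Gamma$ over a single arc — this is where the $\a$-string asymptotics enter and where one uses that $Z$ varies by $\ll\e$ over each $\e$-long sub-interval of $\f$ — and one must exhibit genuinely $\e$-separated points on $\Gamma_2$ for the lower bound. This is essentially the Tricot / \v Zubrini\'c--\v Zupanovi\'c $\e$-neighbourhood analysis of a focus-type spiral, transplanted from a spiral accumulating onto a point to one accumulating onto a circle, the new feature being precisely the extra ``$+1$'' in the exponent coming from that circle. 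A subsidiary point worth checking carefully is the claim, used repeatedly above, that the hypotheses on $f$ already force $f$ (hence $\varrho$ and $Z$) to be eventually monotone.
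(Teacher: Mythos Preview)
Your argument is correct, but it takes a genuinely different route from the paper's.  The paper does not redo a box count at all: having written $\Gamma_2$ in cylindrical coordinates exactly as you do, it observes that the cylindrical radius has the form $r=R-F(\f)$ with $F(\f)\simeq\f^{-2\a}$, $|F'(\f)|\simeq\f^{-2\a-1}$, $|F''(\f)|\le c\f^{-2\a}$, and that the height is $Z=g(|R-r|)$ for $g(t)=\sqrt{2Rt-t^2}$, which satisfies $g(t)\simeq t^{1/2}$ (with the corresponding derivative conditions).  These are precisely the hypotheses of \cite[Theorem~5(b)]{3} on limit-cycle-type spirals in $\eR^3$, with parameters $\a_2=2\a$ and $\b=1/2$, and that theorem then gives $\dim_B\Gamma_2=\frac{2+\a_2\b}{1+\a_2\b}=\frac{2+\a}{1+\a}$ directly.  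So the paper's proof is a short hypothesis check against existing machinery.

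What you do instead is, in effect, reprove the relevant special case of \cite[Theorem~5(b)]{3} from scratch: the bi-Lipschitz tubular-neighbourhood straightening, the reduction to the cylinder model $\hat\Gamma\st S^1_L\times[0,\d]$ via the observation that the fibre trace lies on a single smooth arc, and the two-sided Tricot-style box count (fibrewise $\a$-string for the upper bound, $\e$-separated head coils for the lower bound).  Your approach is self-contained and makes the ``circle $\times$ $\a$-string'' geometry completely explicit, and it gives Minkowski nondegeneracy along the way; the paper's approach is much shorter and modular but requires the reader to consult~\cite{3}.  The two are complementary rather than in tension: your straightening and head/tail dichotomy are exactly the ideas underlying the proof of \cite[Theorem~5(b)]{3}, so you have essentially rediscovered the engine rather than quoted it.
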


\begin{proof}[Proof.]
In cylindrical coordinates in $\eR^3$ the above described map of the spiral $\Gamma_1$ is given by
$$
(f(\varphi),\varphi)\mapsto\left(\frac{R}{\sqrt{1+R^2f(\varphi)^2}},\varphi,\frac{R^2f(\varphi)}{\sqrt{1+R^2f(\varphi)^2}}\right).
$$
We will use \cite[Theorem 5(b)]{3}. In our case we have $r=R-F(\f)$,
where 
$$
F(\f)=R-\displaystyle\frac{R}{\displaystyle\sqrt{1+R^2f(\varphi)^2}}, 
$$
and $Z=\sqrt{R^2-r^2}=g(|R-r|)$, for $g(t)=\sqrt{2Rt-t^2}$.

Now we have to check the conditions from \cite[Theorem 5(b)]{3}:
$$
g(t)\displaystyle\sim t^{1/2}\sqrt{2R},\ 
g'(t)\sim\sqrt{\frac{R}{2}}t^{-1/2},\ 
g''(t)\sim -\frac{\sqrt{R}}{2\sqrt{2}}t^{-3/2}.
$$
We can see that $g$ meets the conditions for $\b=1/2$. Now we check the conditions on $F$:
$$
F(\varphi)\sim\frac{R^3}{2}f(\varphi)^2\simeq\varphi^{-2\a};
$$
$$
|F'(\varphi)|=\frac{R^3f(\f)|f'(\f)|}{(1+R^2f(\f)^2)^{3/2}}\simeq\varphi^{-2\a-1};
$$
$$
|F''(\varphi)|\le \frac{R^3}{(1+R^2f^2)^{5/2}}\left(|f'|^2+f|f''|+R^2f^3|f''|+2R^2f^2|f'|^2\right)\leq c\f^{-2\a}
$$ for $\f$ sufficiently large.
So we can see that $F$ meets the conditions of the \cite[Theorem 5(b)]{3} for $\a_2=2\a$ and we have the conclusion
$$
\dim_B\Gamma_2=\frac{2+\a_2\b}{1+\a_2\b}=\frac{2+\a}{1+\a}.
$$
\end{proof}
\begin{remark}
For $\a\geq 1$ the box dimension of the spiral $\Gamma_1$ is one. This shows that projecting a spiral of trivial box dimension to the Poincar\' e sphere we can get a spiral of nontrivial box dimension.
\end{remark}

\begin{figure}[h]
\begin{center}
\includegraphics[trim=0cm 6cm 0cm 10cm,clip=true,width=12cm]{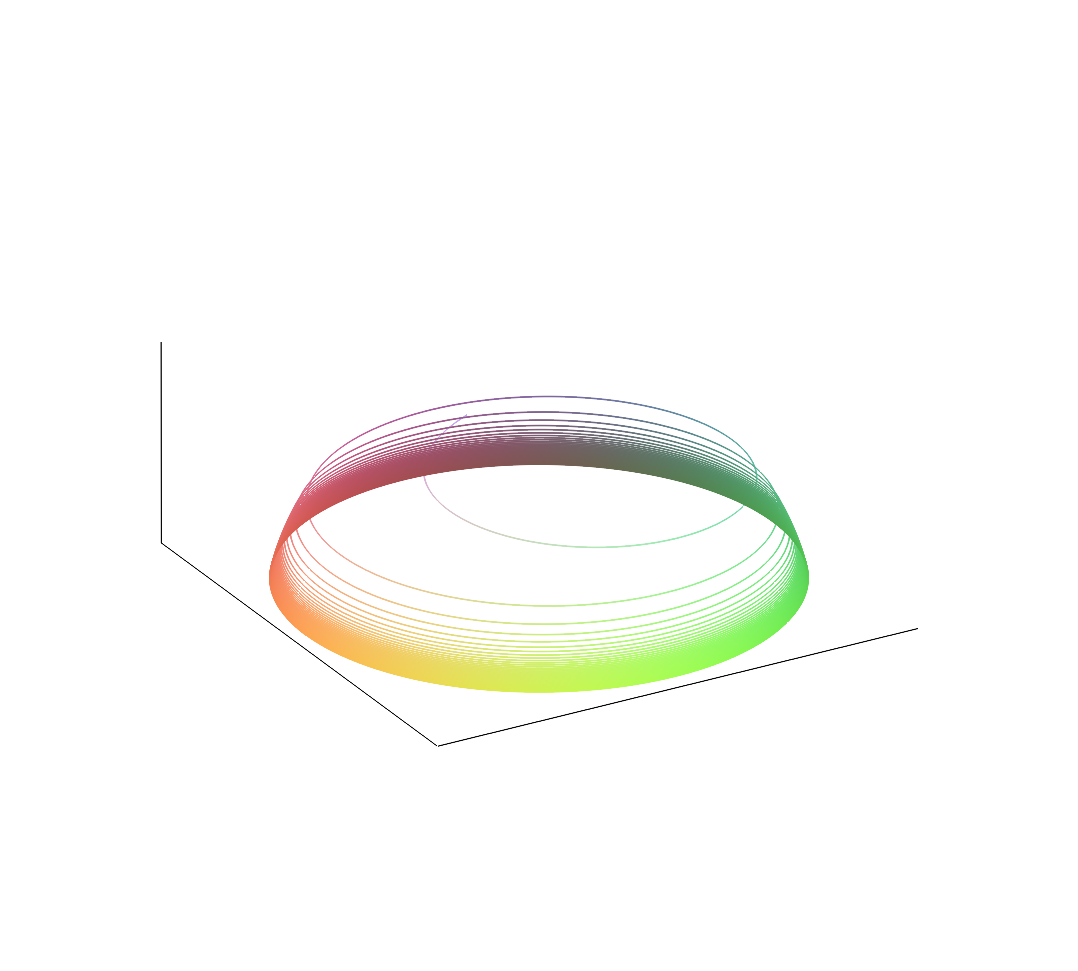}
\end{center}
\caption{The unbounded spiral $f(\varphi)=\varphi^{1/4}$ projected to the Poincar\' e sphere of radius $1$.}
\label{pspiral}
\end{figure}

\begin{remark}\label{dim_trans}
As the Poincar\' e sphere is usually represented by orthogonally projecting it on the $xy$-plane, this will further affect the box dimension of a spiral defined in Proposition \ref{p_sfera}. After projecting it, we will have a limit cycle type spiral $\Gamma_3\ldots r=R-F(\f)$ in the plane and its box dimension will be reduced to $$\dim_B\Gamma_3=\frac{2+2\a}{1+2\a}.$$

For a focus type spiral $\Gamma_1$ of nontrivial box dimension i.e. for $\a\in(0,1)$ we have the next relations between box dimensions:
\bgeq
\begin{aligned}
\dim_B\Gamma_2&=1+\frac{1}{2}\dim_B\Gamma_1\\
\dim_B\Gamma_3&=\frac{2}{3-\dim_B\Gamma_2},\q
\dim_B\Gamma_3&=\frac{4}{4-\dim_B\Gamma_1}.\nonumber
\end{aligned}
\endeq

\end{remark}

\section{Weak focus at infinity}

\subsection{Weakly damped oscillator}
Let us consider a weakly damped oscillator 
\bgeq
\ddot y+Cy^{\a}(\dot y)^{\b}+y=0,\nonumber
\endeq
where $\a$ is even and $\b$ odd positive integer, and $C$ is any positive constant.
 It is well known that it is globally stable. It is equivalent to the following planar system:
\bgeq
\begin{aligned}\label{damped}
\dot x&=-y-Cx^\b y^\a  \\
\dot y&=\phantom{-}x.\nonumber
\end{aligned}
\endeq

All nontrivial trajectories $\C$, corresponding to $t\ge0$, are spirals converging clockwise to the origin, and the origin is the weak focus.
Using Lemma~\ref{wf} we conclude that the corresponding system obtained by geometric inversion is
\bgeq\label{damped2}
\begin{aligned}
\dot u&=-v+C\frac{u^\b u^\a(u^2-v^2)}{(u^2+v^2)^{\a+\b}}  \\
\dot v&=\phantom{-}u+2C\frac{u^{\b+1}v^{\a+1}}{(u^2+v^2)^{\a+\b}}.
\end{aligned}
\endeq
All trajectories corresponding to $t\ge0$, starting outside the origin, are of the form $\Phi(\C)$ for some $\C$ as above. 
The spirals $\Phi(\C)$ are converging clockwise to infinity, and the infinity is the weak focus.
The system (\ref{damped2}) becomes polynomial after multiplying the right-hand sides by $(u^2+v^2)^{\a+\b}$:
\bgeq\label{damped2pol}
\begin{aligned}
\dot u&=-v(u^2+v^2)^{\a+\b}+Cu^\b u^\a(u^2-v^2)  \\
\dot v&=\phantom{-}u(u^2+v^2)^{\a+\b}+2Cu^{\b+1}v^{\a+1}.
\end{aligned}
\endeq

\begin{theorem}
Any nontrivial trajectory $\C$ of the system (\ref{damped2pol}), corresponding to $t\ge0$, is a spiral converging to infinity, and
\bgeq
\dim_B\C=2\left(1-\frac1{\a+\b}\right).\nonumber
\endeq
Furthermore, the spirals are Minkowski nondegenerate.
\end{theorem}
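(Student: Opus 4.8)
The orbits of (\ref{damped2pol}) coincide, off the origin, with those of (\ref{damped2}), which by Lemma~\ref{wf} is the geometric inverse of the damped oscillator (\ref{damped}); and by Definition~\ref{box} together with the involutivity $\Phi^2=\mathrm{id}$ one has $\dim_B\C=\dim_B\Phi(\C)$, with $\Phi(\C)$ Minkowski nondegenerate iff $\C=\Phi(\Phi(\C))$ is. Since every forward orbit of the globally stable oscillator (\ref{damped}) converges to the origin, its image under $\Phi$ converges to infinity. Hence the plan is to show that every nontrivial positive semitrajectory $\ov\C$ of (\ref{damped}) is, for large time, a focus-type spiral tending monotonically to $0$ with $\dim_B\ov\C=2(1-1/(\a+\b))$ and Minkowski nondegenerate; the claim for (\ref{damped2pol}) then follows by transporting through $\Phi$ and discarding an initial arc of finite length (excision). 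Throughout write $n=\a+\b\ge3$ and $p=1/(n-1)\in(0,1)$.

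First I would pass to polar coordinates $x=r\cos\f$, $y=r\sin\f$ in (\ref{damped}); a short computation gives
\bgeq
\dot r=-Cr^{n}\cos^{\b+1}\f\,\sin^\a\f,\qquad \dot\f=1+Cr^{n-1}\cos^{\b}\f\,\sin^{\a+1}\f.\nonumber
\endeq
Because $\b+1$ and $\a$ are even, $\dot r\le0$, so $r(t)$ is nonincreasing; a routine LaSalle-type argument (the only positively invariant subset of $\{\dot r=0\}=\{xy=0\}$ near $0$ is $\{0\}$) gives $r(t)\to0$, and then $\dot\f\to1$ forces $\f(t)\to+\ty$. Thus for $t$ large the semitrajectory is the graph of a decreasing $C^2$ function $r=f(\f)$, $\f\ge\f_1$, satisfying $df/d\f=-Cf^{n}\cos^{\b+1}\f\,\sin^\a\f\,(1+Cf^{n-1}\cos^{\b}\f\,\sin^{\a+1}\f)^{-1}$. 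Next, setting $w=f^{1-n}$ (legitimate as $n\ge3$), this rewrites as $w'(\f)=(n-1)C\cos^{\b+1}\f\,\sin^\a\f\,(1+o(1))$ as $\f\to\ty$; integrating, doing the usual bootstrap on the logarithmic error term, and using $\bar K:=\frac1{2\pi}\int_0^{2\pi}\cos^{\b+1}s\,\sin^\a s\,ds>0$ (both exponents even), I get $w(\f)\sim(n-1)C\bar K\,\f$, i.e.
\bgeq
f(\f)\sim c\,\f^{-p},\qquad c=\big((n-1)C\bar K\big)^{-p}.\nonumber
\endeq
Consequently the winding sequence $a_k:=f(\f_1+2\pi k)$ obeys $a_k\simeq k^{-p}$, and, integrating $-f'$ over one period (where the angular integral is exactly $2\pi\bar K$) and using $f^{n}\simeq k^{-pn}=k^{-p-1}$, the successive gaps obey $a_k-a_{k+1}\simeq k^{-p-1}$.

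Finally, the box dimension and Minkowski nondegeneracy of a focus-type spiral $r=f(\f)$ with $f$ decreasing depend only on the comparability class of its radial profile; by the results on spiral trajectories in \cite{zuzu} (see also \cite[p.~121]{tricot}), a decreasing spiral with $f(\f)\simeq\f^{-p}$, $p\in(0,1)$, has box dimension $\max\{1,2/(1+p)\}$ and is Minkowski nondegenerate. Since $n\ge3$ makes $2/(1+p)=2(n-1)/n>1$, I conclude $\dim_B\ov\C=2(n-1)/n=2(1-1/(\a+\b))$ and nondegeneracy, and transporting through $\Phi$ as above finishes the proof. The genuinely delicate point is exactly this last step: $df/d\f$ vanishes at every angle with $\cos\f\sin\f=0$, so although $f$ is decreasing it is not ``regular'' in the strong sense (the limit $\lim_{\f\to\ty}f'(\f)/(\f^{-p})'$ does not exist), and Theorem~\ref{mink} cannot be applied to $f$ directly — one must fall back on the coarser invariant keyed only to the comparability of $f$. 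Concretely this amounts to decomposing the $\e$-neighbourhood of $\ov\C$ into an inner disk of radius $\simeq a_{K(\e)}$, where $K(\e)\simeq\e^{-1/(1+p)}$ is the turn at which $a_k-a_{k+1}$ first drops below $\e$, plus $\simeq K(\e)$ essentially disjoint tubular turns of width $2\e$, and checking that both contributions are $\simeq\e^{\,2-2/(1+p)}$; verifying this estimate (equivalently, locating our $f$ within the scope of the cited spiral theorem) is where the real work lies, everything else being routine.
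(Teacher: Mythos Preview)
Your argument is correct and essentially complete as a sketch: the reduction via $\Phi$ to the bounded spirals of~(\ref{damped}) is right, the polar computation is accurate (with $\beta+1$ and $\a$ both even giving $\dot r\le0$), the asymptotic $f(\f)\sim c\f^{-1/(n-1)}$ via $w=f^{1-n}$ is the standard averaging step, and you correctly flag that the oscillation of $f'$ rules out Theorem~\ref{mink} and forces one back to comparability-based estimates on the $\e$-neighbourhood.

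The paper, however, does not carry out any of this. Its entire proof is a one-line citation of \cite[Theorem~7]{pzz}, which already computes the box dimension and establishes Minkowski nondegeneracy of the bounded spiral trajectories of the weakly damped oscillator~(\ref{damped}); the passage to~(\ref{damped2pol}) is then immediate from Definition~\ref{box}. What you have written is in effect a reconstruction of the proof of that cited result: you redo the polar analysis and the $\e$-neighbourhood accounting that \cite{pzz} (building on the machinery of \cite{zuzu}) packages into a ready-made theorem. Your route is more self-contained and shows exactly where the exponent $2(1-1/(\a+\b))$ comes from; the paper's route is a black-box invocation that keeps the exposition short. Both are valid; if you want to match the paper, simply replace your last two paragraphs by an appeal to \cite[Theorem~7]{pzz}.
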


This follows immediately from \cite[Theorem 7]{pzz}. 

\subsection{Li\'enard systems}

Let us consider the following Li\'enard system:
\bgeq\label{lien}
\begin{aligned}
\dot x&=\displaystyle-y+\sum_{i=1}^{N}a_{2i}x^{2i}+\sum_{i=k}^{N}a_{2i+1}x^{2i+1}\\
\dot y&=\phantom{-}x.\nonumber
\end{aligned}
\endeq
Here we assume that $a_{2k+1}\ne0$, which means that this is the first nontrivial coefficient on the right-hand side having odd index.
The system obtained by geometric inversion is
\bgeq\label{lien1}
\begin{aligned}
\dot u&=\displaystyle-v+(v^2-u^2)\tilde p(u)\\
\dot v&=\phantom{-}u-2uv\,\tilde p(u),
\end{aligned}
\endeq
where 
\bgeq
\tilde p(u)=\sum_{i=1}^{N}a_{2i}\frac{u^{2i}}{(u^2+v^2)^{2i}}+\sum_{i=k}^{N}a_{2i+1}\frac{u^{2i+1}}{(u^2+v^2)^{2i+1}}.\nonumber
\endeq
Multiplying the right-hand sides of (\ref{lien1}) by $(u^2+v^2)^N$, the system becomes polynomial, retaining the same phase portrait outside the origin.
An immediate consequence of \cite[Theorem~6]{belg} is the following result.

\begin{theorem}
If in (\ref{lien1}) we have $a_{2k+1}\ne0$, then for any initial point $(u_0,v_0)\ne(0,0)$ we have
that the corresponding trajectory $\C=\C(u_0,v_0)$ starting from that point is a spiral tending to infinity, and
$$
\dim_B\C=2\left(1-\frac{1}{2k+1}\right).
$$
Furthermore, $\C$ is Minkowski nondegenerate.
\end{theorem}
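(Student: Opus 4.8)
The plan is to recognise this theorem as the image, under geometric inversion, of the corresponding statement about the original Li\'enard system (\ref{lien}), and then to invoke \cite[Theorem~6]{belg}. Recall that (\ref{lien1}) was obtained from (\ref{lien}) by the involution $\Phi(x)=x/|x|^{2}$ of $\eR^{2}\setminus\{0\}$, so $\Phi$ maps the phase portrait of (\ref{lien1}) bijectively onto that of (\ref{lien}) (multiplying the right-hand side of (\ref{lien1}) by $(u^{2}+v^{2})^{N}$ to render it polynomial alters neither the orbits nor this correspondence). Hence, if $\C=\C(u_{0},v_{0})$ is the trajectory of (\ref{lien1}) through a point $(u_{0},v_{0})\ne(0,0)$, then $\C':=\Phi(\C)$ is a trajectory of (\ref{lien}) through $\Phi(u_{0},v_{0})=(u_{0},v_{0})/(u_{0}^{2}+v_{0}^{2})\ne(0,0)$, and $\C=\Phi(\C')$ because $\Phi^{2}=\mathrm{id}$.

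Next I would apply the known result to the Li\'enard system (\ref{lien}). Its linear part at the origin is a rotation, so the origin is a weak focus; since for a Li\'enard system only the odd-degree coefficients of $F(x)=\sum_{j}a_{j}x^{j}$ enter the focus quantities, the hypothesis that $a_{2k+1}$ is the first nonzero coefficient with odd index fixes the order of the weak focus in terms of $2k+1$ and makes the even-degree terms $a_{2i}x^{2i}$ irrelevant to it. By \cite[Theorem~6]{belg}, every nonzero trajectory $\C'$ of (\ref{lien}) is then a spiral converging to the origin, with
$$
\dim_{B}\C'=2\left(1-\frac{1}{2k+1}\right),
$$
and $\C'$ is Minkowski nondegenerate.

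Finally I would transport the conclusion back through $\Phi$. By Definition~\ref{box}, $\dim_{B}\C=\dim_{B}\Phi(\C)=\dim_{B}\C'=2\bigl(1-\tfrac{1}{2k+1}\bigr)$, and $\C$ is Minkowski nondegenerate exactly because $\C'=\Phi(\C)$ is. To see that $\C$ is a spiral tending to infinity, note that $\C=\Phi(\C')$ and that $\Phi$ is a diffeomorphism of $\eR^{2}\setminus\{0\}$ which is bi-Lipschitz on each annulus $\{r_{1}\le|x|\le r_{2}\}$ and interchanges neighbourhoods of the origin with neighbourhoods of infinity; it therefore carries the spiral $\C'$ converging to $0$ onto a curve which winds infinitely often about the origin while its modulus tends to $\infty$, i.e.\ a spiral tending to infinity (cf.\ Figure~\ref{unbounded_spiral}).

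The only genuinely delicate point is the verification of the hypotheses of \cite[Theorem~6]{belg} for the original system: that \emph{every} nonzero initial point of (\ref{lien}) yields a spiral converging to the origin (a global statement, which excludes limit cycles and escape of orbits) and that the even-degree terms of $F$ do not change the order of the weak focus. Both are supplied by that reference; everything else here is the bookkeeping identity $\dim_{B}\C=\dim_{B}\Phi(\C)$ together with the involutivity of $\Phi$.
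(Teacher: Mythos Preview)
Your proposal is correct and follows exactly the approach of the paper, which simply states that the result is ``an immediate consequence of \cite[Theorem~6]{belg}''. You have merely unpacked what this means: apply \cite[Theorem~6]{belg} to the original Li\'enard system~(\ref{lien}) to get the box dimension and Minkowski nondegeneracy of the bounded spiral $\C'=\Phi(\C)$, and then read off the same conclusions for the unbounded spiral $\C$ directly from Definition~\ref{box}.
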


As we see, unbounded spiral trajectories (in fact, semitrajectories, i.e.\ starting from initial point) of Li\'enard systems can achieve box dimensions with values from the following set only:
\bgeq
D_0=\{\frac{4k}{2k+1}:k\in\eN\}=\{\frac43,\frac85,\frac{12}7,\frac{16}9,\frac{20}{11},\dots\}.\nonumber
\endeq
For analytic systems, these are the only values of box dimensions that unbounded spiral trajctories can achieve, see \cite{belg}.

A more general result can be stated in terms of the Poincar\'e map at infinity. We deal with the system (\ref{pq}) such that $p(x,y)$ and $q(x,y)$
are analytic functions of the form
\bgeq\label{pqan}
p(x,y)=\sum_{k=2}^\ty p_k(x,y),\q q(x,y)=\sum_{k=2}^\ty q_k(x,y)
\endeq
where $p_k$ and $q_k$ are homogeneous polynomials of $k$-th degree. The Lyapunov coefficient of a system at infinity is defined as the Lyapunov coefficient at the origin
of the system obtained by geometric inversion. The Lyapunov coefficient near the weak focus is defined as the coefficient of the leading term of the Taylor expansion of the displacement function. The following result follows immediately from \cite[Theorem~6]{belg}.

\begin{theorem}\label{analytic}
Let $\C$ be an unbounded spiral trajectory, away from the origin, associated to the system (\ref{pquv}). Assume that $p(x,y)$ and $q(x,y)$
are analytic functions as in (\ref{pqan}). If $V_{2k+1}$ is the first nonzero Lyapunov coefficient of (\ref{pquv}) at infinity, then
\bgeq
\dim_B\C=2\left(1-\frac1{2k+1}\right).\nonumber
\endeq
Furthermore, $\C$ is Minkowski nondegenerate.
\end{theorem}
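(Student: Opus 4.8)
The plan is to reduce Theorem~\ref{analytic} to the already-cited result \cite[Theorem~6]{belg}, which (in the setting of a weak focus at the origin) relates the box dimension of a spiral trajectory to the order of the first nonzero Lyapunov coefficient. The bridge between ``at infinity'' and ``at the origin'' is exactly the geometric inversion machinery developed in Section~2. First I would observe that, by Lemma~\ref{wf}, the system (\ref{pquv}) is precisely the geometric inverse of a planar system of the form (\ref{pq}) with perturbation terms $\tilde p,\tilde q$ as displayed there; equivalently, starting from (\ref{pq}) with analytic $p,q$ as in (\ref{pqan}), the inverted system is (\ref{pquv}). Since an unbounded spiral trajectory $\C$ of (\ref{pquv}) that stays away from the origin corresponds under $\Phi$ to a bounded spiral trajectory $\Phi(\C)$ of (\ref{pq}) accumulating at the origin (the origin being a weak focus there), Definition~\ref{box} gives $\dim_B\C=\dim_B\Phi(\C)$, and likewise the Minkowski nondegeneracy of $\C$ is by definition that of $\Phi(\C)$.

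Next I would invoke the definition of the Lyapunov coefficient at infinity given just before the statement: by construction, the first nonzero Lyapunov coefficient $V_{2k+1}$ of (\ref{pquv}) at infinity equals the first nonzero Lyapunov coefficient at the origin of the inverted system, i.e.\ of (\ref{pq}). Hence the hypothesis ``$V_{2k+1}$ is the first nonzero Lyapunov coefficient of (\ref{pquv}) at infinity'' translates verbatim into ``the origin of (\ref{pq}) is a weak focus whose first nonzero Lyapunov coefficient has index $2k+1$''. At this point \cite[Theorem~6]{belg} applies directly to the bounded spiral $\Phi(\C)$: it yields $\dim_B\Phi(\C)=2\left(1-\frac1{2k+1}\right)$ and that $\Phi(\C)$ is Minkowski nondegenerate. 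Transporting this back through Definition~\ref{box} gives $\dim_B\C=2\left(1-\frac1{2k+1}\right)$ and Minkowski nondegeneracy of $\C$, which is the claim.

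One technical point deserves care and would be the first thing I check: after geometric inversion the vector field (\ref{pquv}) need not be analytic in $(u,v)$ at the origin (indeed $\tilde p,\tilde q$ involve negative powers of $u^2+v^2$), so one cannot apply \cite[Theorem~6]{belg} to (\ref{pquv}) itself near infinity — one must apply it to (\ref{pq}), which \emph{is} analytic at the origin by hypothesis (\ref{pqan}). This is why the argument is phrased as ``invert first, then apply the known theorem'' rather than the reverse. A related subtlety is that (\ref{pq}) should genuinely exhibit a weak focus (center-focus type linear part) rather than a center: this is guaranteed precisely by the existence of a first \emph{nonzero} Lyapunov coefficient $V_{2k+1}$, which also forces the trajectory to spiral rather than close up, so that $\Phi(\C)$ is a genuine spiral accumulating at the origin and not a periodic orbit.

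The main obstacle, then, is not a hard estimate but the bookkeeping of the correspondence: verifying that ``weak focus at infinity'', ``Lyapunov coefficient at infinity'', and ``spiral trajectory tending to infinity'' all dualize correctly under $\Phi$ to the hypotheses of \cite[Theorem~6]{belg}, and that the bi-Lipschitz invariance built into Definition~\ref{box} (together with Theorem~\ref{bilip0}, if one wants to move the base point) legitimately carries both the dimension value and the nondegeneracy statement across the inversion. Once these identifications are made explicit, the proof is a one-line appeal to the cited theorem.
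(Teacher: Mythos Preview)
Your proposal is correct and matches the paper's own treatment: the paper states that Theorem~\ref{analytic} ``follows immediately from \cite[Theorem~6]{belg}'', and your argument is precisely the geometric-inversion reduction (via Lemma~\ref{wf} and Definition~\ref{box}) that makes this immediate. The additional care you take about analyticity and the weak-focus hypothesis is sound and simply makes explicit what the paper leaves implicit.
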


\subsection{Classical Hopf bifurcation}
The classical Hopf bifurcation is defined by the following system for $k=1$:
\bgeq\label{hopf}
\begin{aligned}
\dot x&=-y-x\left((x^2+y^2)^k+a\right)  \\
\dot y&=\phantom{-}x-y\left((x^2+y^2)^k+a\right),
\end{aligned}
\endeq
where $a$ is the bifurcation parameter. The corresponding spirals $\C$ are converging clockwise to the origin. 
Using Lemma~\ref{Rx} (here $R$ is the symplectic $2\times2$ matrix and $g(r)=r^{2k}+a$) we have that the related system obtained
from (\ref{hopf}) by geometric inversion is
\bgeq\label{hopfuv}
\begin{aligned}
\dot u&=-v+u\left((u^2+v^2)^{-k}+a\right)  \\
\dot v&=\phantom{-}u+v\left((u^2+v^2)^{-k}+a\right),
\end{aligned}
\endeq
and the corresponding spirals are converging clockwise to infinity, which is the weak focus.
The corresponding polynomial system 
\bgeq\label{hopfuvp}
\begin{aligned}
\dot u&=-v(u^2+v^2)^{k}+u\left(1+a(u^2+v^2)^{k}\right)  \\
\dot y&=\phantom{-}u(u^2+v^2)^{k}+v\left(1+a(u^2+v^2)^{k}\right),
\end{aligned}
\endeq
has the same phase portrait as (\ref{hopfuv}) outside the origin.

In polar coordinates $(r,\f)$ system (\ref{hopf}) has the form
\bgeq\label{hopfpolar}
\begin{aligned}
\dot r&=-r(r^{2k}+a)  \\
\dot \f&=1.
\end{aligned}
\endeq
For $a<0$ the limit cycle is born off the origin, $r=(-a)^{1/k}$,
while system (\ref{hopfuv}) in polar coordinates $(\rho,\f)$ has the form
\bgeq\label{hopfuvpolar}
\begin{aligned}
\dot \rho&=\rho(\rho^{-2k}+a) \\
\dot \f&=1.
\end{aligned}
\endeq
In this case, for $a<0$ the limit cycle is born off infinity. Here $r=(-a)^{-1/k}$ and
$r\to\ty$ as $a\to0_-$. The system (\ref{hopfuvpolar}) is clearly the same  as the one obtained from (\ref{hopf}) by introducing the coordinates
$(\rho,\f)$ defined via
$x=\frac{\cos\f}\rho$, $y=\frac{\sin\f}\rho$. The following result shows that the box dimension `recognizes' the Hopf bifurcation.

\begin{theorem}
Let $a=0$ in the bifurcation problem (\ref{hopfuv}) or (\ref{hopfuvp}). Then any unbounded spiral trajectory $\C$, away of the origin, has the box dimension
equal to
\bgeq
\dim_B\C=\frac{4k}{2k+1},\nonumber
\endeq
and is Minkowski measurable. For all the other values of $a$ the box dimension is trivial, i.e.\ equal to $1$.
\end{theorem}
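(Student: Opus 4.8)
The plan is to reduce the statement to the already-established Minkowski-measurability result for focus-type spirals, Theorem~\ref{mink}, by analysing the polar form (\ref{hopfuvpolar}) of the geometrically inverted system. First I would treat the non-critical case $a\ne0$: the equation $\dot\rho=\rho(\rho^{-2k}+a)$ shows that for $a>0$ one has $\dot\rho>0$ always, so $\rho\to\infty$ monotonically with $\dot\rho\simeq a\rho$, giving (after $\dot\f=1$) a spiral with $\rho$ growing exponentially in $\f$; its geometric inverse $r=1/\rho$ decays exponentially and hence has box dimension $1$ (a classical fact for exponential spirals, e.g.\ \cite{tricot}). For $a<0$ the level $\rho=(-a)^{-1/k}$ is an attracting or repelling limit cycle and every other trajectory converges to it, so $\Phi(\C)$ converges to a circle and again $\dim_B\C=1$. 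This disposes of the ``trivial'' part of the theorem.

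For the critical case $a=0$, the inverted system in polar coordinates is simply $\dot\rho=\rho^{1-2k}$, $\dot\f=1$, so $\rho^{2k}\,d\rho=dt$ and $d\rho/d\f=\rho^{1-2k}$, which integrates to $\rho(\f)\sim c\,\f^{1/(2k)}$ as $\f\to\infty$ for a suitable constant $c>0$. Equivalently, the corresponding bounded spiral $\Gamma=\Phi(\C)$ is given by $r=f(\f)$ with $f(\f)=1/\rho(\f)\sim c^{-1}\f^{-1/(2k)}$, i.e.\ it is asymptotically an $\a$-power spiral with $\a=1/(2k)\in(0,1)$. Then I would verify the hypotheses of Theorem~\ref{mink}: differentiating the ODE for $\rho(\f)$ one gets $f'(\f)\sim -\a c^{-1}\f^{-\a-1}$ and, differentiating once more, the bound $|f''(\f)|\le M\f^{-\a}$ for large $\f$ (all of this is a routine computation using $\rho'=\rho^{1-2k}$, $\rho''=(1-2k)\rho^{1-4k}$). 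Theorem~\ref{mink} then yields that $\Gamma$ is Minkowski measurable with $\dim_B\Gamma=2/(1+\a)=2/(1+1/(2k))=4k/(2k+1)$; by Definition~\ref{box} this is exactly $\dim_B\C$, and Minkowski measurability of $\C$ is defined as that of $\Phi(\C)=\Gamma$. Finally, since any nontrivial trajectory away from the origin has this same asymptotic polar description (the only freedom being the initial angle/radius, which affects $c$ but not $\a$), the conclusion holds for all such $\C$.

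The main obstacle is not conceptual but bookkeeping: one must carefully justify that the solution $\rho(\f)$ of $d\rho/d\f=\rho^{1-2k}$ has the precise asymptotics $\rho(\f)\sim c\f^{1/(2k)}$ \emph{together with} matching asymptotics for its first derivative (so that the limit $m=\lim_{\f\to\infty}f'(\f)/(\f^{-\a})'$ exists and is strictly positive, as required by Theorem~\ref{mink}) and a clean $O(\f^{-\a})$ bound on $f''$. For this I would integrate the separable equation explicitly, $\rho(\f)^{2k}=2k(\f-\f_0)+\rho_0^{2k}$, which gives $\rho(\f)=(2k\f+\mathrm{const})^{1/(2k)}$ exactly, making all three asymptotic estimates immediate and removing any subtlety. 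A minor additional point is to confirm that passing from (\ref{hopfuv}) to the polynomial system (\ref{hopfuvp}) does not change the trajectories as sets outside the origin (noted already in the text), so that the box-dimension statement for the polynomial system follows at once.
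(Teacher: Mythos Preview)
Your proposal is correct and follows essentially the same route as the paper: pass via geometric inversion to the bounded spiral of the original Hopf system and then invoke the known box-dimension/Minkowski-measurability result for power-type focus spirals; the paper does this in one line by citing \cite[Theorem~7]{zuzu}, whereas you solve the polar ODE explicitly and verify the hypotheses of Theorem~\ref{mink} by hand. One harmless slip: the separation of variables is $\rho^{2k-1}\,d\rho=d\f$, not $\rho^{2k}\,d\rho=dt$, but your integrated formula $\rho(\f)^{2k}=2k(\f-\f_0)+\rho_0^{2k}$ is correct regardless.
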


This is an immediate consequence of \cite[Theorem~7]{zuzu}.

\section{Hopf-Takens bifurcation at infinity}

Using geometric inversion and results from \cite{zuzu}, chapter 4, we shall study fractal properties of Hopf-Takens bifurcation occurring at infinity. For a standard generic Hopf-Takens bifurcation we have the  normal form: 
\bgeq
\begin{aligned}
X_\pm^{(l)}&:=\left(-y\frac{\partial}{\partial x}+x\frac{\partial}{\partial y}\right)\\
&\phantom{:=}\,\,\pm\left((x^2+y^2)^l+a_{l-1}(x^2+y^2)^{l-1}+\cdots+a_0\right)ÿ\left( x\frac{\partial}{\partial x}+y\frac{\partial}{\partial y}\right),\nonumber
\end{aligned}
\endeq
where $(a_0,\ldots,a_{l-1})\in\eR^l$ is fixed.
In sequel we will consider $X_+^{(l)}$ only, since the case $X_-^{(l)}$ is treated similarly. In case $X_+^{(l)}$ the normal form in polar coordinates is given by  
\bgeq\label{htpolar}
\begin{aligned}
\dot r&=r\left(r^{2l}+\sum_{j=0}^{l-1}a_jr^{2j}\right)\\
\dot \f&=1.
\end{aligned}
\endeq
Geometric inversion with $\rho:=1/r$ yields a new system of differential equations
\bgeq\label{htinf}
\begin{aligned}
\dot \rho&=-\rho\left(\rho^{-2l}+\sum_{j=0}^{l-1}a_j\rho^{-2j}\right)\\
\dot \f&=1.
\end{aligned}
\endeq

Now it is easy to see that the following analogous versions of \newline \cite[Theorems 9 and 10]{zuzu} are valid.

\begin{theorem}\label{focus_ty} {\rm(The case of focus).}
Let $\Gamma$ be a part of a trajectory of (\ref{htinf}) near infinity.

(a) Assume that $a_0\neq 0$. Then the spiral $\Gamma$ is of exponential type, that is, comparable with $\rho=e^{-a_0\f}$, and hence $\dim_B\Gamma=1$.

(b) Let $k$ be fixed, $1\leq k\leq l$, $a_0=\cdots=a_{k-1}=0$, $a_k\neq 0$. Then $\Gamma$ is comparable with the spiral $\rho=\f^{1/2k}$, and
\bgeq
\dim_B\Gamma=\frac{4k}{2k+1}.\nonumber
\endeq
\end{theorem}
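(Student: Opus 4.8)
The plan is to reduce both parts to a direct application of the Minkowski-measurable spiral machinery already developed, together with the change of variables afforded by geometric inversion. First I would observe that by Definition~\ref{box} the box dimension of an unbounded spiral $\Gamma$ near infinity equals the box dimension of its geometric inverse $\Phi(\Gamma)$, which is a bounded spiral near the origin governed by system~(\ref{htpolar}); so it suffices to analyse the bounded spiral $r=r(\f)$ solving $\dot r=r(r^{2l}+\sum_{j=0}^{l-1}a_jr^{2j})$, $\dot\f=1$, as $\f\to+\infty$ (equivalently $r\to0^+$). From these two equations we get the separable scalar ODE $\frac{dr}{d\f}=r\big(r^{2l}+\sum_{j=0}^{l-1}a_jr^{2j}\big)$, and the asymptotics of $r(\f)$ is dictated by the lowest-order nonvanishing term on the right.

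For part (a), when $a_0\ne0$ the right-hand side is $\sim a_0 r$ as $r\to0^+$, so $\frac{dr}{d\f}\sim a_0 r$ and hence $r(\f)$ is comparable to $e^{a_0\f}$ (with the relevant sign making $r\to0$), i.e.\ $\Phi(\Gamma)$ is an exponential spiral; such spirals have box dimension $1$ (they are rectifiable near the nucleus, or one invokes the corresponding statement from \cite{zuzu}), and therefore $\dim_B\Gamma=\dim_B\Phi(\Gamma)=1$. For part (b), when $a_0=\cdots=a_{k-1}=0$ and $a_k\ne0$, the right-hand side is $\sim a_k r^{2k+1}$, so $\frac{dr}{d\f}\sim a_k r^{2k+1}$, which integrates to $r(\f)^{-2k}\sim -2k\,a_k\,\f$, that is $r(\f)\sim C\f^{-1/2k}$ with $C=(-2k a_k)^{-1/2k}>0$ (the sign of $a_k$ is forced by $r\to0$). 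Thus $\Phi(\Gamma)$ is comparable to the spiral $r=\f^{-1/(2k)}$, whose box dimension is $\max\{1,\frac{2}{1+1/(2k)}\}=\frac{4k}{2k+1}$ by the formula quoted after Theorem~\ref{mink} (see \cite{tricot}); translating back through $\rho=1/r$ gives that $\Phi(\Gamma)$ corresponds to the unbounded spiral $\rho=\f^{1/2k}$ and $\dim_B\Gamma=\frac{4k}{2k+1}$.

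To upgrade ``comparable'' to Minkowski measurability (and to get the dimension rigorously rather than just the exponent), I would verify that $f(\f):=r(\f)$ satisfies the hypotheses of Theorem~\ref{mink} with $\a=1/(2k)\in(0,1)$: namely that $f$ is eventually a decreasing $C^2$ function, that $\lim_{\f\to\ty} f'(\f)/(\f^{-\a})'$ exists and is positive, and that $|f''(\f)|\le M\f^{-\a}$. The first two follow by differentiating the asymptotic relation $r^{-2k}\sim -2k a_k\f$ once more and using the ODE to express $f'$ and $f''$ as polynomials in $f$ times powers of $f$; the bound on $f''$ comes from $f''=\frac{d}{d\f}\big(f\cdot(f^{2l}+\sum a_j f^{2j})\big)=f'\cdot(\text{polynomial in }f)=O(f^{2k+1})\cdot O(f^{2k})=O(f^{4k+1})=o(f^{2k})=o(\f^{-\a})$, which is even stronger than required. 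Then Theorem~\ref{mink} gives $\dim_B\Phi(\Gamma)=d=2/(1+\a)=4k/(2k+1)$ and Minkowski measurability of $\Phi(\Gamma)$, hence of $\Gamma$ by Definition~\ref{box}; in case (a) one argues separately that the exponential spiral is rectifiable and so has trivial box dimension.

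The main obstacle I anticipate is not any single estimate but the bookkeeping needed to pass cleanly from the scalar ODE $\frac{dr}{d\f}=r\,(r^{2l}+\sum_{j<l}a_j r^{2j})$ to sharp two-sided asymptotics for $r(\f)$, $r'(\f)$, $r''(\f)$ that feed Theorem~\ref{mink} — in particular making precise the ``$\sim$'' in $r(\f)^{-2k}\sim -2k a_k\f$ (one needs the error to be $o(\f)$, which requires integrating the subleading terms, all of which are higher order in $r$ and hence integrable against $d\f$ after the substitution) and checking the sign conventions so that the trajectory indeed tends to infinity (equivalently $r\to0$) as $t$ increases, which is where the sign of $a_k$ and the choice of $X_+^{(l)}$ versus $X_-^{(l)}$ enter. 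Once these asymptotics are in hand, everything else is a direct citation of Theorem~\ref{mink} and the bi-Lipschitz/inversion invariance already established, so the proof is short.
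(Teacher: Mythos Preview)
Your proposal is correct and follows essentially the same route as the paper: reduce via Definition~\ref{box} to the bounded spiral $\Phi(\Gamma)$ governed by~(\ref{htpolar}), then read off the box dimension from the known results on power/exponential spirals near the origin. The paper's own argument is even terser---it simply notes that (\ref{htinf}) is the geometric inverse of (\ref{htpolar}) and invokes \cite[Theorem~9]{zuzu} directly---whereas you go further and sketch the underlying asymptotics and the verification of the hypotheses of Theorem~\ref{mink}; this extra detail is fine but not needed, and your Minkowski-measurability upgrade goes beyond what the theorem claims.
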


\begin{theorem}\label{lch}{\rm (The case of limit cycle).} Let the system (\ref{htinf}) have a limit cycle $\rho=a$ of multiplicity $m$, $1\leq m\leq l$. By $\Gamma_1$ and $\Gamma_2$ we denote the parts of two trajectories of (\ref{htinf}) near the limit cycle from outside and inside respectively. Then the trajectories $\Gamma_1$ and $\Gamma_2$ are comparable

(a) with exponential spirals $\rho=a\pm e^{-\b\f}$ of limit cycle type when $m=1$, for some constants $\b\neq 0$ (depending only on the coefficients $a_i$, $0\leq i\leq l-1$),

(b) with power spirals $\rho=a\pm\f^{-1/(m-1)}$ when $m>1$.

In both cases we have
\bgeq
\dim_B\Gamma_i=2-\frac{1}{m},\ \ i=1,2.\nonumber
\endeq
\end{theorem}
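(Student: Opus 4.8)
The plan is to reduce the statement to the already-established results on bounded spirals near a multiple limit cycle, namely \cite[Theorems 9 and 10]{zuzu}, via the dictionary set up earlier in the paper. First I would observe that the change of variable $\rho = 1/r$ carried out in passing from (\ref{htpolar}) to (\ref{htinf}) is precisely the geometric inversion, so that a trajectory $\rho = \rho(\varphi)$ of (\ref{htinf}) near a limit cycle $\rho = a$ corresponds under $\Phi$ to a trajectory $r = r(\varphi)$ of (\ref{htpolar}) near the limit cycle $r = 1/a$ of the same multiplicity $m$ (multiplicity of a limit cycle is preserved since it is read off the Poincar\'e map, and $\Phi$ conjugates the two Poincar\'e maps). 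The original system (\ref{htpolar}) is exactly the Hopf--Takens normal form treated in \cite{zuzu}, so by \cite[Theorem~10]{zuzu} the trajectories $r = r(\varphi)$ of (\ref{htpolar}) near the multiple limit cycle $r = 1/a$ are comparable with $r = 1/a \pm e^{-\beta\varphi}$ when $m=1$ and with $r = 1/a \pm \varphi^{-1/(m-1)}$ when $m>1$, with box dimension $2 - 1/m$ and Minkowski nondegeneracy.

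Next I would transport these comparability statements back through $\Phi$ at the level of the spiral curves. The key analytic point is that near the compact limit cycle $\{r = 1/a\}$ (which is bounded away from the origin, since $a$ is finite and nonzero), the inversion $\Phi$ restricted to an annular neighbourhood of that circle is bi-Lipschitz, and moreover a local computation in polar coordinates gives $\rho = 1/r$, so that $r = 1/a \pm \varepsilon(\varphi)$ transforms into $\rho = a \mp a^2\varepsilon(\varphi)(1 + o(1))$; hence an exponential graph goes to an exponential graph (with the same rate up to sign and a harmless multiplicative constant absorbed into $\beta$) and a power graph $\varphi^{-1/(m-1)}$ goes to a comparable power graph. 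This yields statements (a) and (b). For the box dimension, I would then invoke Theorem~\ref{bilip0}: the spiral $\Gamma_i$ near infinity, being away from the origin, has $\dim_B\Gamma_i = \dim_B\Phi(\Gamma_i)$ by Definition~\ref{box}, and $\Phi(\Gamma_i)$ is bi-Lipschitz equivalent to the bounded spiral in \cite{zuzu} whose box dimension is $2 - 1/m$; the same route transfers Minkowski nondegeneracy, using that $\Phi$ restricted to a neighbourhood of the compact limit cycle is bi-Lipschitz and that bi-Lipschitz maps preserve nondegeneracy.

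The main obstacle I anticipate is not conceptual but bookkeeping: one must check carefully that the limit cycle $\rho = a$ of (\ref{htinf}) really sits at finite, positive distance from the origin and that a full one-sided neighbourhood of it (from outside and from inside, as in the statement) is mapped by $\Phi$ into a region on which the bi-Lipschitz estimates hold uniformly — in particular that the constant $\beta$ in (a), quoted as depending only on the $a_i$, is exactly the one coming from \cite[Theorem~10]{zuzu} after the substitution $\rho = 1/r$, with no hidden dependence on the initial condition. A secondary subtlety is handling the two trajectories $\Gamma_1,\Gamma_2$ separately: the sign in $\rho = a \pm \varphi^{-1/(m-1)}$ must be matched to the side (outside/inside) and to the direction of spiralling, which requires a sign check in $\dot\rho$ from (\ref{htinf}) near $\rho = a$, but this is routine once the leading-order expansion of $\rho^{-2l} + \sum a_j \rho^{-2j}$ about $\rho = a$ is written down.
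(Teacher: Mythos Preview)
Your approach is essentially the paper's own: the remark immediately following Theorem~\ref{lch} simply notes that $\Gamma_1,\Gamma_2$ lie in a bounded open annulus around the circle $\rho=a$, away from the origin, so $\Phi$ restricted there is bi-Lipschitz and the conclusion is a direct consequence of \cite[Theorem~10]{zuzu}. One slip worth correcting: you write ``the spiral $\Gamma_i$ near infinity'' and invoke Definition~\ref{box} and Theorem~\ref{bilip0}, but the $\Gamma_i$ in this theorem are \emph{bounded} sets near the finite limit cycle $\rho=a$, so only the classical bi-Lipschitz invariance of box dimension for bounded sets is needed --- the unbounded-set machinery of Section~2 plays no role here.
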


\begin{remark}
In Theorem \ref{lch} the parts of trajectories we are observing are contained in an open ring around the limit cycle which is a bounded set that does not contain the origin. As geometric inversion $\Phi$ is bi-Lipschitzian on such sets, Theorem \ref{lch} is a direct consequence of Theorem 10 from \cite{zuzu}. 
\end{remark}

\begin{remark}
From Theorem~\ref{lch} we know that for 
 (\ref{htinf}) each spiral trajectory of limit cycle type has box dimension from the set
\bgeq
D_1=\{2-\frac1m:m\in \eN\}=\left\{1,\frac32,\frac53,\frac74,\frac95,\dots\right\}.\nonumber
\endeq
See \cite[p.\ 958]{belg}.
\end{remark}

Let us have a look at the inversion of a standard Hopf bifurcation in polar coordinates, i.e. system (\ref{htinf}) for $l=1$:
\bgeq\label{htinfl1}
\begin{aligned}
\dot \rho&=-\rho(\rho^{-2}+a_0)\\
\dot \f&=1.
\end{aligned}
\endeq 

\begin{figure}[b]
\begin{center}
\includegraphics[height=3.9cm]{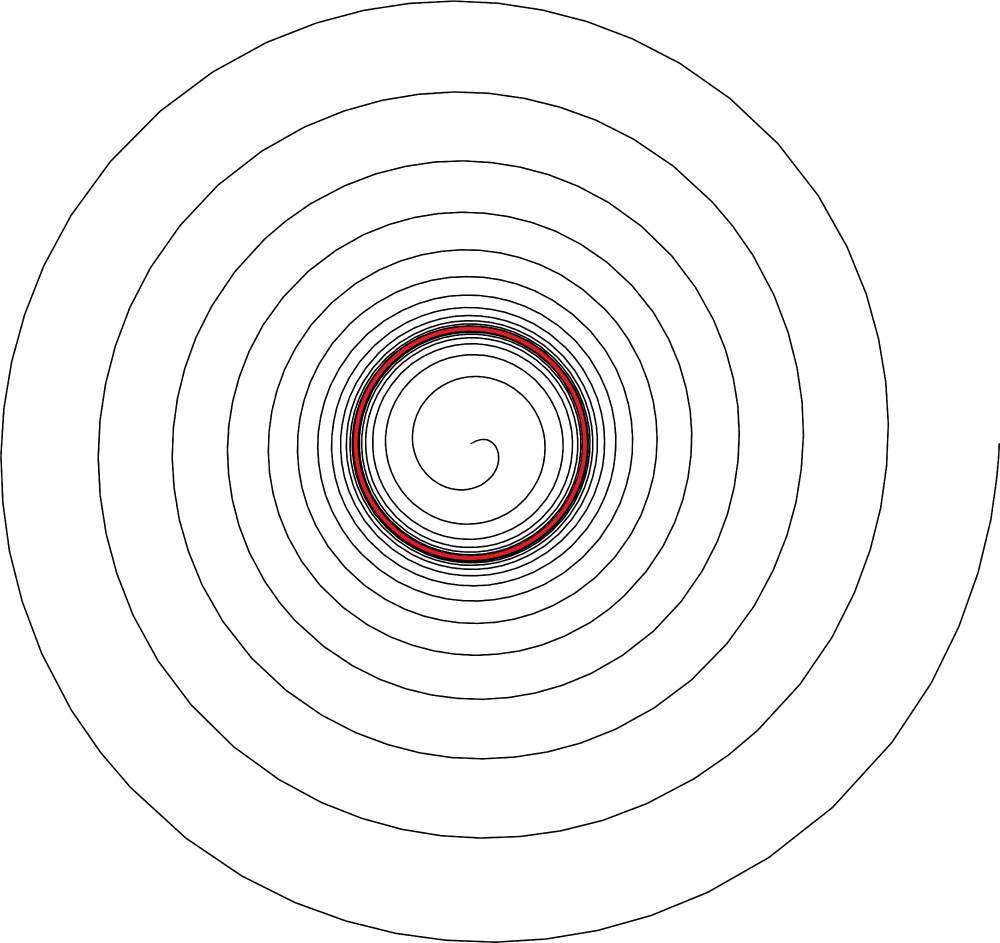}
\includegraphics[height=3.9cm]{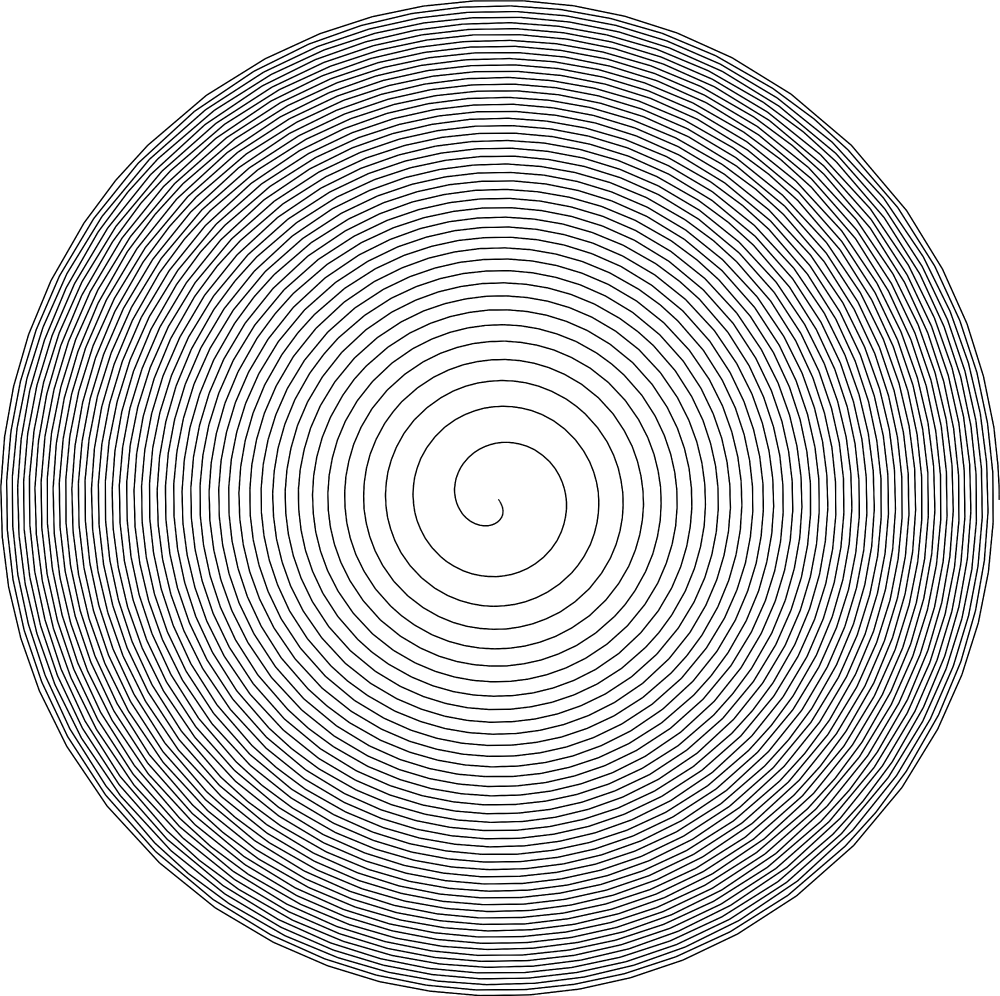}
\includegraphics[height=3.9cm]{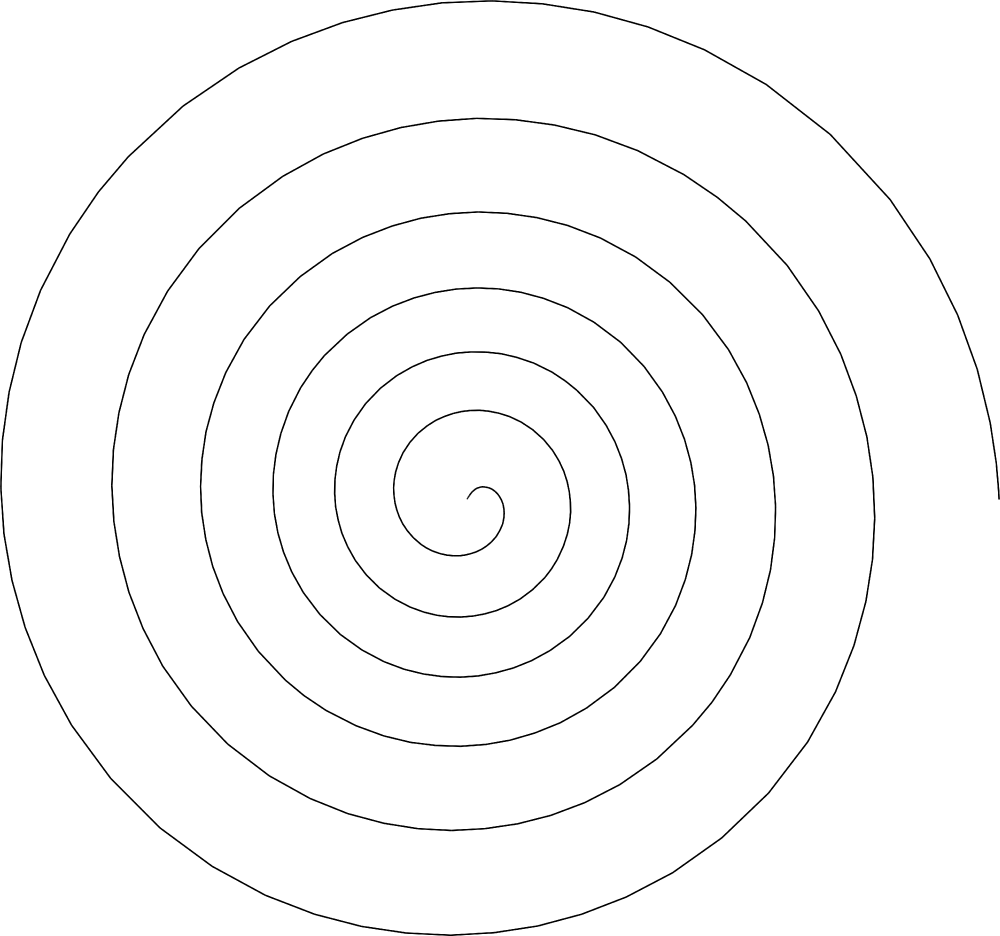}
\end{center}
\caption{Trajectories of the system (\ref{htinfl1}). Left: $a_0=-1/25$ with a limit cycle born from infinity; Middle: $a_0=0$ with weak focus at infinity; Right: $a_0=1/25$ with exponential spiral at infinity.}
\label{inf_hopf_fig}
\end{figure}

Viewing $a_0$ as a bifurcation parameter, we have the three following possibilities.

(1) For $a_0<0$ the trajectories of (\ref{htinfl1}) are given with 
$$
\rho(\f)=-\frac{1}{a_0}\sqrt{-a_0+a_0^2Ce^{-2a_0\f}},\ \ C\geq 0,\ \f\in\eR\textrm{ or } C<0,\ \f\leq\frac{\ln(a_0C)}{2a_0}.
$$
We can see that we have a strong focus at infinity and the circle $\rho=(-a_0)^{-1/2}$ is the limit cycle for trajectories from inside and outside near the circle. The corresponding spirals near infinity are comparable with $\rho=e^{-a_0\f}$, while the spirals near the circle are comparable with $\rho=(-a_0)^{-1/2}\pm e^{-a_0\f}$. All these spiral trajectories are of exponential type and hence of box dimension equal to 1. See Figure \ref{inf_hopf_fig}, left.

(2) For $a_0=0$ the trajectories of (\ref{htinfl1}) are given with 
$$
\rho(\f)=\sqrt{-2\f+C},\ \ C\in\eR,\ \f\leq\frac{C}{2}.
$$
and infinity is a weak focus with $\dim_B\Gamma=4/3$ where by $\Gamma$ we denote a part of the trajectory near infinity. See Figure \ref{inf_hopf_fig}, middle.

(3) For $a_0>0$ the trajectories of (\ref{htinfl1}) are given with 
$$
\rho(\f)=\frac{1}{a_0}\sqrt{-a_0+a_0^2Ce^{-2a_0\f}},\ \ C>0,\ \f\leq\frac{\ln(a_0C)}{2a_0}.
$$
Infinity is a strong focus and all the trajectories near infinity are comparable with the spiral $\rho=e^{-a_0\f}$ of exponential type, and hence have box dimension equal to $1$. See Figure \ref{inf_hopf_fig}, right.

\begin{figure}[t]
\begin{center}
\includegraphics[height=4cm]{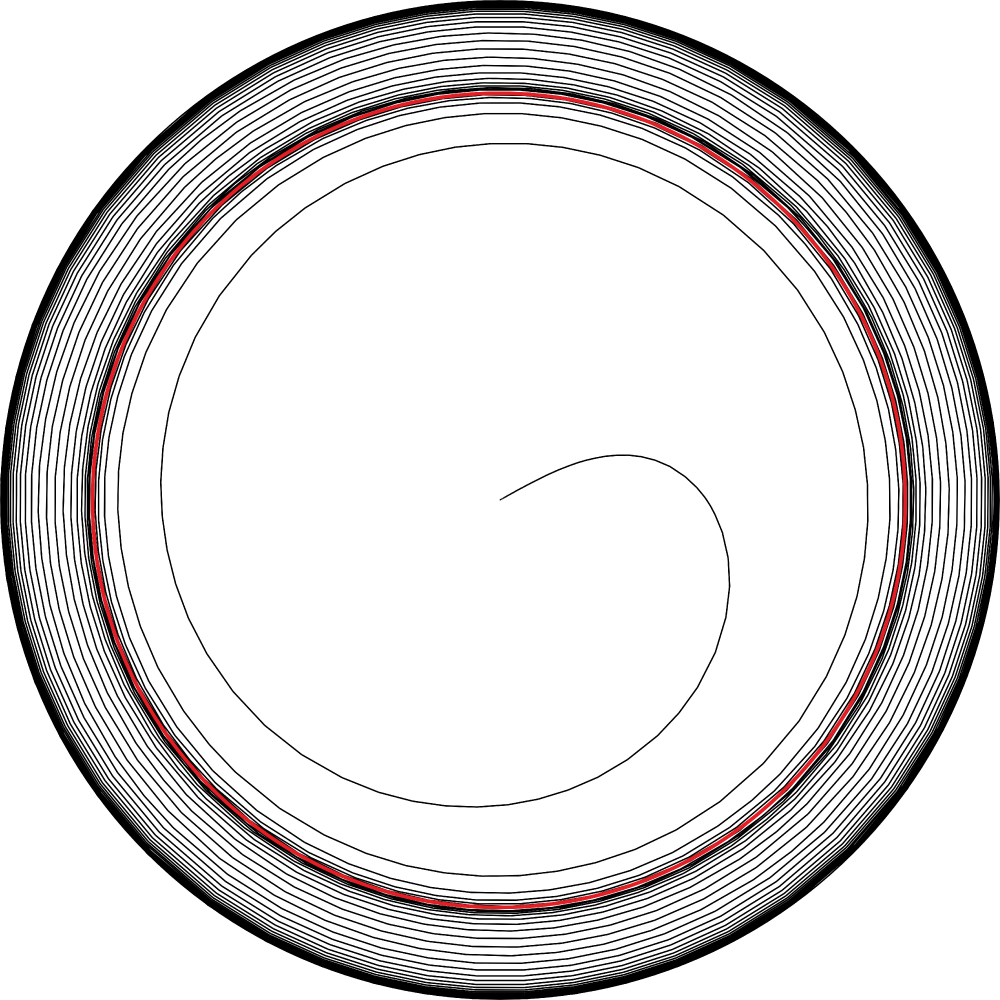}
\includegraphics[height=4cm]{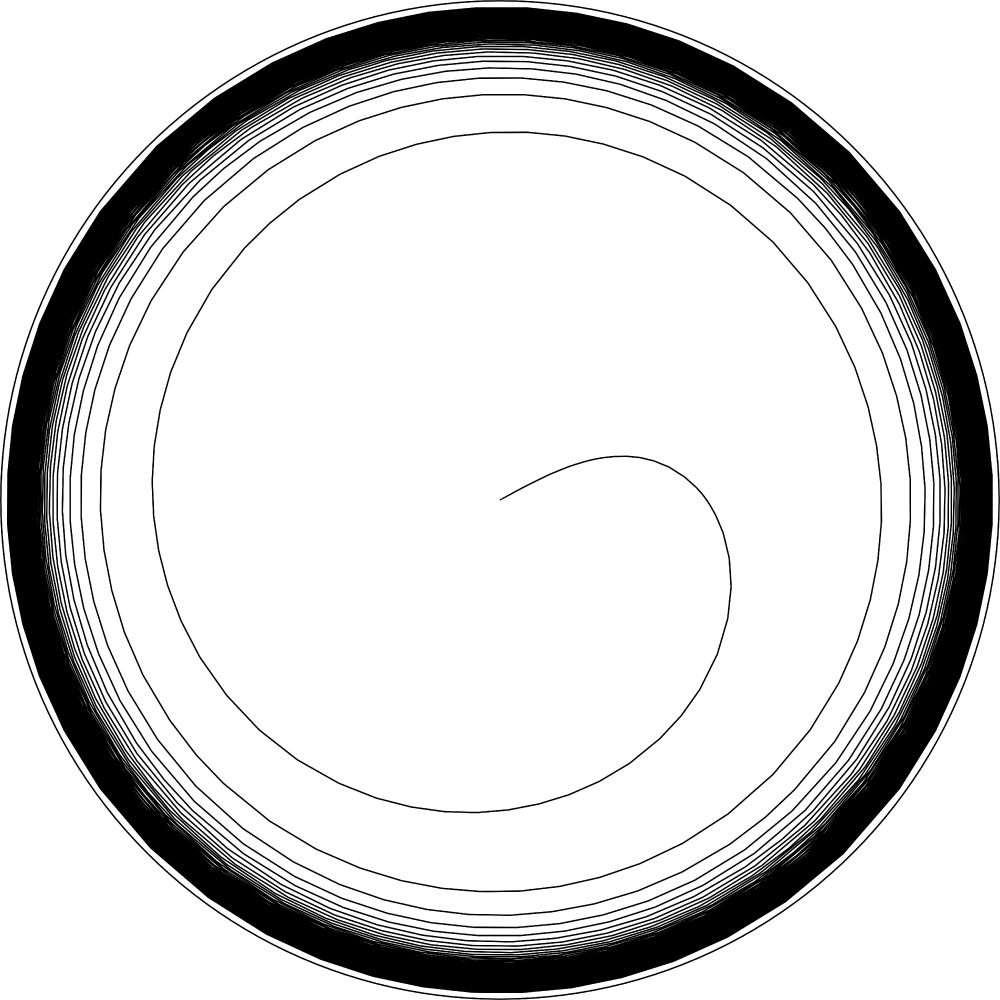}
\includegraphics[height=4cm]{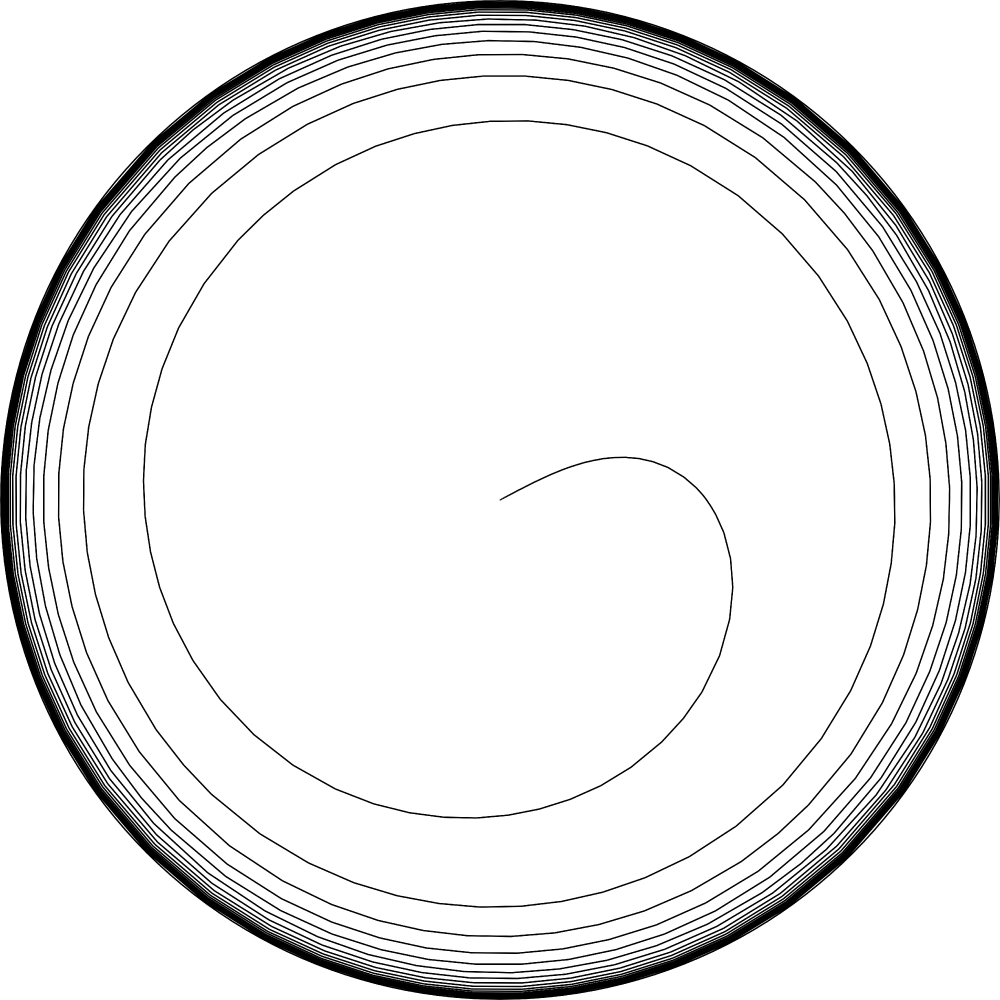}
\end{center}
\caption{Trajectories of the system (\ref{htinfl1}) drawn on the Poincar\' e disc. For $a_0=0$ (middle) the box dimension around the equator would be 1+$\frac{1}{2}\frac{4}{3}=5/3$ but after projecting the half sphere onto the disc the box dimension is reduced to $\frac{2}{3-5/3}=\frac{3}{2}$.}
\label{inf_hopf_fig_poinc}
\end{figure}

Let us now consider the case $l=2$ in (\ref{htinf}):
\bgeq\label{takinf}
\begin{aligned}
\dot \rho&=-\rho(\rho^{-4}+a_0+a_1\rho^{-2})\\
\dot \f&=1.
\end{aligned}
\endeq
Let us fix the value $a_1=-2$ and consider $a_0$ as a bifurcation parameter. Since it is clearer to see what is happening, the phase portrets will be drawn on the Poincar\' e disc.

\begin{figure}[h]
\begin{center}
\includegraphics[height=4cm]{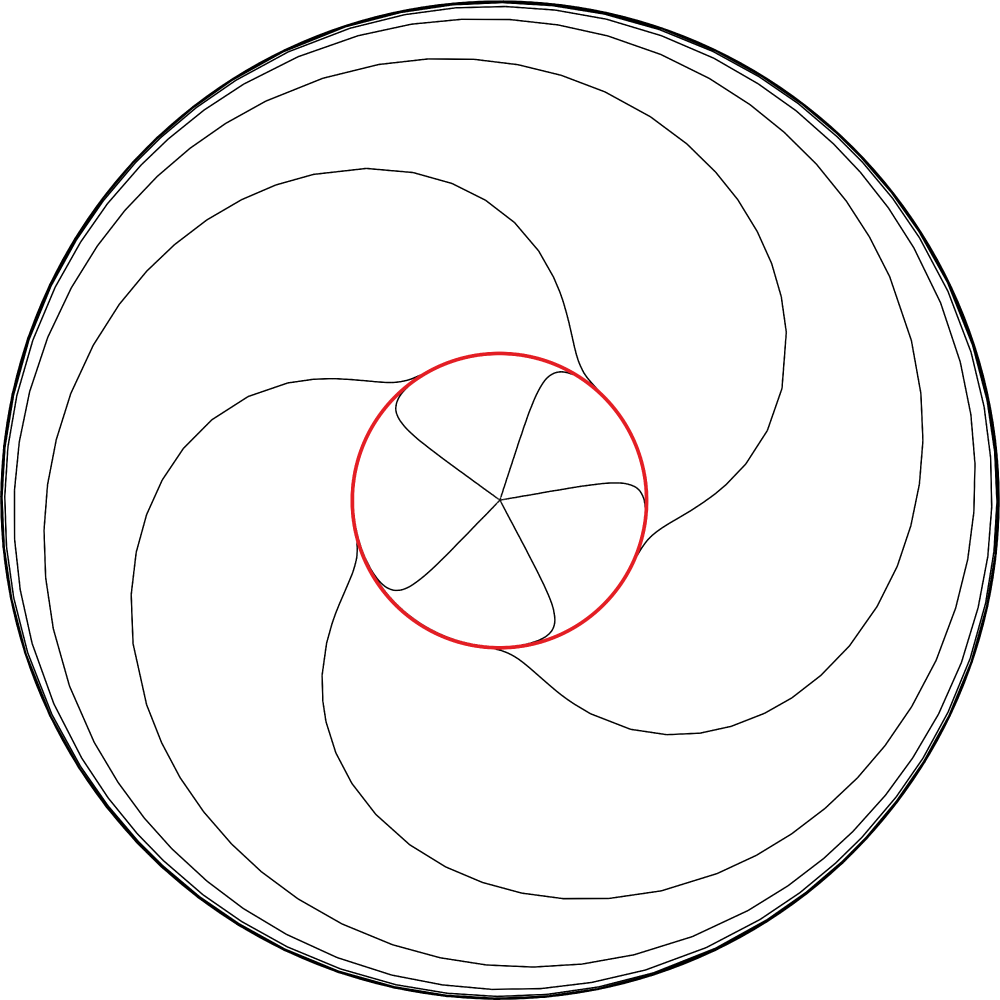}
\includegraphics[height=4cm]{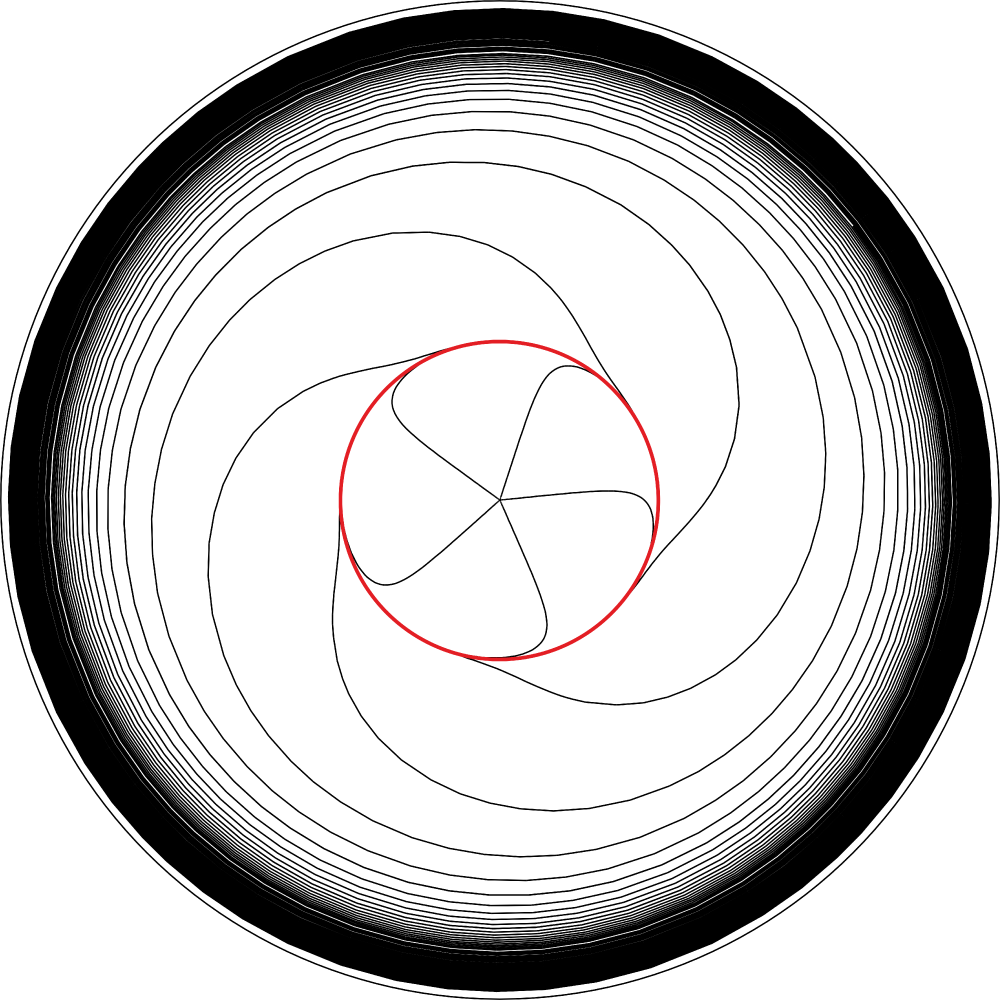}
\includegraphics[height=4cm]{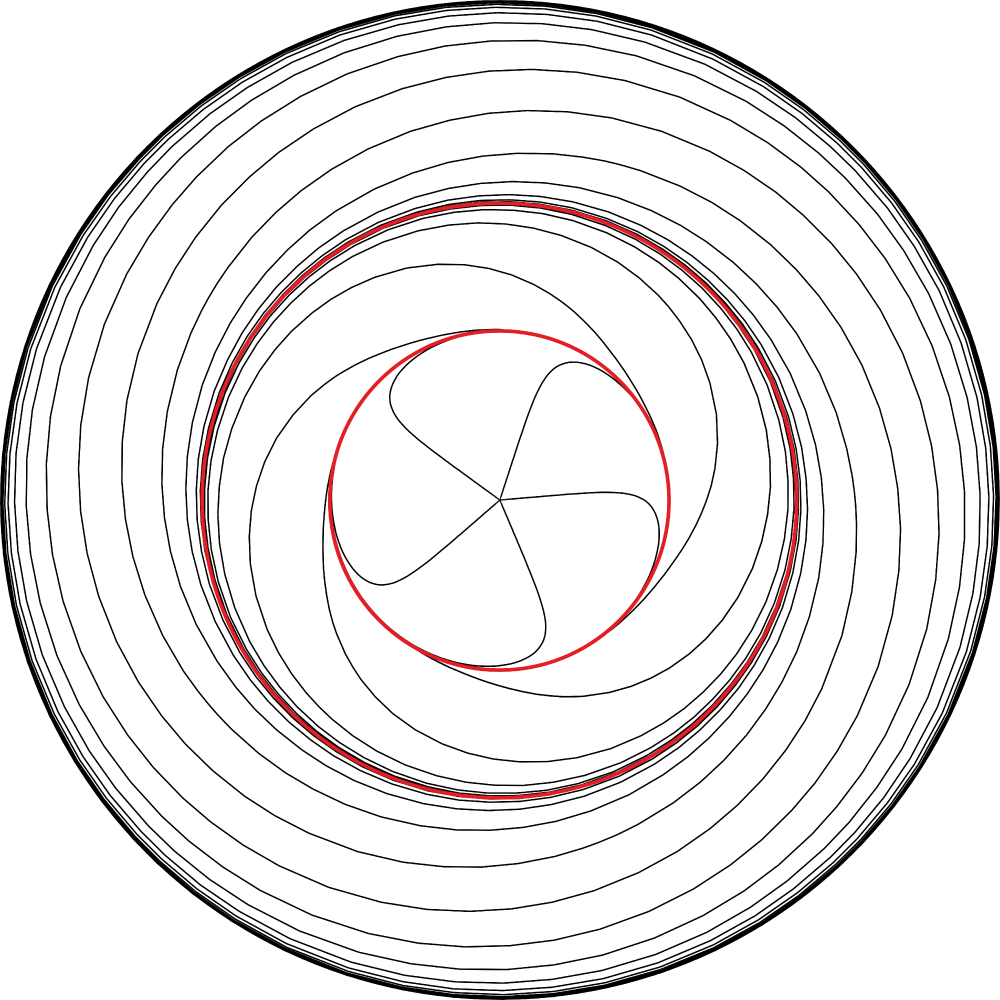}
\end{center}
\caption{}
\label{inf_hopf_tak_fig1}
\end{figure}

\begin{figure}[h]
\begin{center}
\includegraphics[height=4cm]{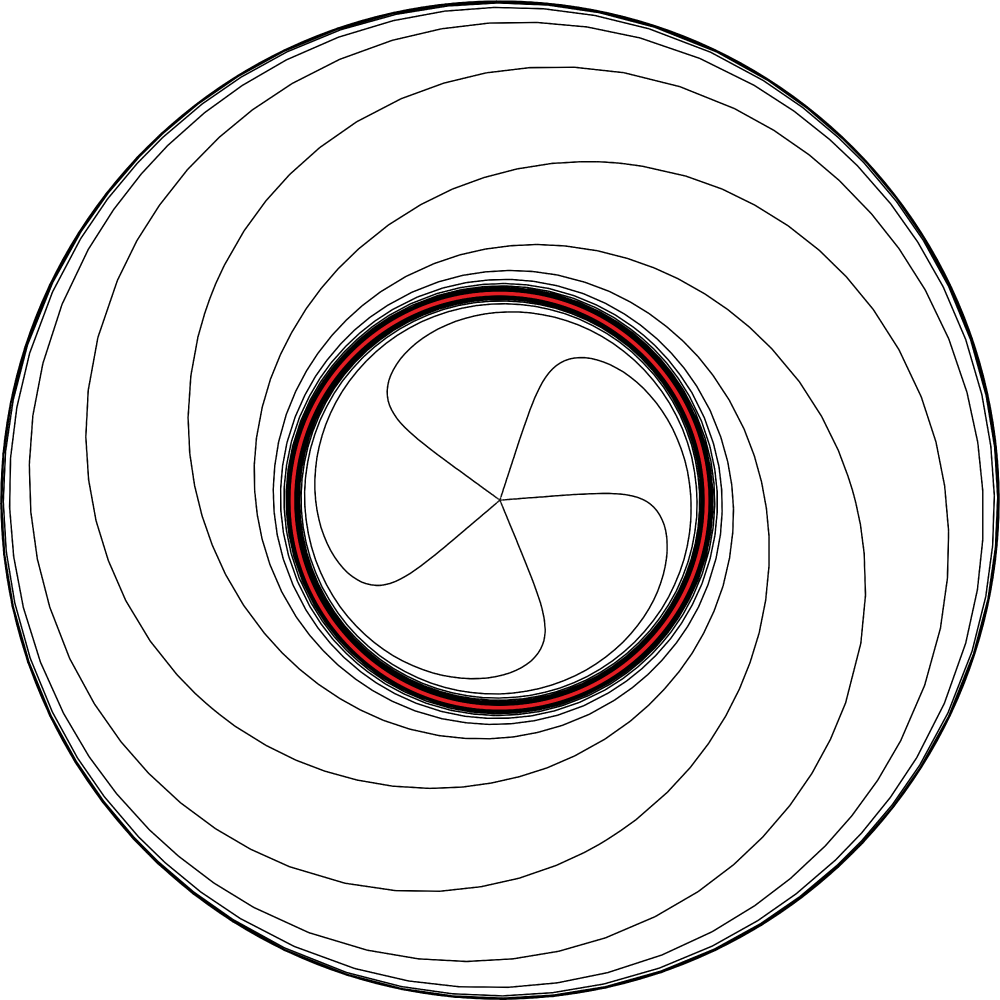}
\includegraphics[height=4cm]{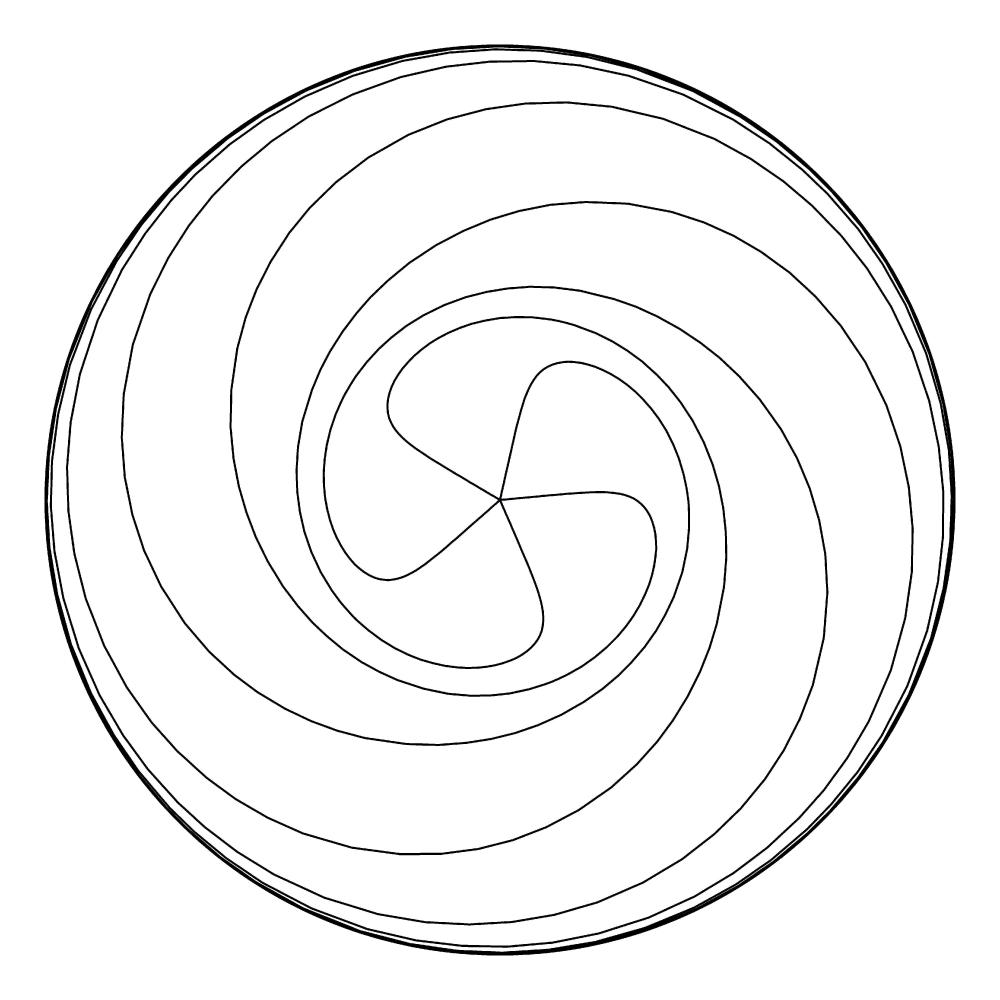}
\end{center}
\caption{}
\label{inf_hopf_tak_fig2}
\end{figure}

(a) When $a_0<0$ all box dimensions are equal to $1$ because all the trajectories are of exponential type, see Figure \ref{inf_hopf_tak_fig1}, left.

(b) For $a_0=0$ we have a weak focus at infinity and any part of a trajectory $\Gamma$ near infinity has box dimension equal to $d=4/3$ (power case), whereas the part near the limit cycle $r=1/\sqrt{2}$ has box dimension equal to $1$ (exponential case), see Figure \ref{inf_hopf_tak_fig1}, middle. Actually, because the process of projecting onto the Poincar\' e disc affects the box dimension (see Remark \ref{dim_trans}), the trajectories on the figure near the equator have box dimension equal to $\frac{4}{4-d}=\frac{3}{2}$.

(c) For $a_0\in(0,1)$ we have two limit cycles of multiplicity one, and all box dimensions are equal to $1$ (exponential case), see Figure \ref{inf_hopf_tak_fig1}, right.

(d) For $a_0=1$ we have a limit cycle $r=1$ of multiplicity two, and all trajectories near the limit cycle (either inside or outside) have box dimension equal to $3/2$ (power case), see Figure \ref{inf_hopf_tak_fig2}, left. On the other hand, trajectories near the equator have box dimension equal to one (exponential case).

(e) For $a_0>1$ box dimensions of all trajectories are equal to one (exponential case), see Figure \ref{inf_hopf_tak_fig2}, right.

\end{document}